\documentclass[a4paper,11pt,twoside]{article}

\pagestyle{plain}

\usepackage{amsmath, amssymb, amsthm}
\usepackage{color}

\usepackage{graphicx}


\setlength{\oddsidemargin}{5mm}
\setlength{\evensidemargin}{5mm}
\setlength{\textwidth}{150mm}
\setlength{\textheight}{205mm}
\setlength{\headsep}{15mm}

\newtheorem{thm}{Theorem}
\newtheorem{prop}{Proposition}
\newtheorem{lemma}{Lemma}
\newtheorem{cor}{Corollary}
\newtheorem{remark}{Remark}
\newtheorem{example}{Example}

\numberwithin{equation}{section}
\allowdisplaybreaks

\def\dis{\displaystyle}
\def\R{\mathbb{R}}
\def\Z{\mathbb{Z}}

\def\N{\mathbb{N}}
\def\H{\mathbb{H}}
\def\Y{\mathbb{Y}}

\def\e{{\epsilon}}
\def\D{\Delta}
\def\d{\delta}
\def\l{\left}
\def\r{\right}
\def\a{\alpha}
\def\b{\beta}
\def\G{\Gamma}
\def\g{\gamma}
\def\th{\theta}
\def\s{\sigma}
\def\z{\zeta}
\def\k{\kappa}
\def\la{\lambda}

\def\n{\nabla}
\def\p{\partial}
\def\F{\mathcal{F}}

\def\P{{\mathbb{P}}}
\def\toP{\stackrel{\P}{\longrightarrow}}
\def\toD{\stackrel{\mathcal{D}}{\longrightarrow}}
\def\E{\mathbb{E}}

\def\mb#1{\mbox{\boldmath $#1$}}

\def\I{\mathbf{1}}

\def\wh#1{\widehat{#1}} 
\def\ol#1{\overline{#1}} 
\def\wt#1{\widetilde{#1}} 
\def\df{{\rm d}}
\def\DX{\D_k^nX}

\def\t{{t_{k-1}^n}}
\def\T{{t_k^n}}
\def\filter{\I_{\{|\DX|\le \d_{n,\e}\}}} 
\def\qfilter{\I_{\l\{\|\D Q^\e\|^*_k \le \d\r\}}} 

\pagestyle{myheadings}
\markboth{{\sc Threshold estimation for small noise model}}{{\sc Yasutaka Shimizu}}

\title{Threshold estimation for stochastic processes with small noise}
\author{Yasutaka Shimizu\footnote{E-mail: shimizu@waseda.jp } \\ 
{\it Department of Applied Mathematics, Waseda University}}
\date{March 16, 2017}

\begin{document}

\maketitle 

\begin{abstract} 

Consider a process satisfying a stochastic differential equation with unknown drift parameter, and suppose that discrete observations are given. It is known that a simple least squares estimator (LSE) can be consistent, but numerically unstable in the sense of large standard deviations under finite samples when the noise process has jumps. We propose a filter to cut large shocks from data, and construct the same LSE from data selected by the filter. The proposed estimator can be asymptotically equivalent to the usual LSE, whose asymptotic distribution strongly depends on the noise process. However, in numerical study, it looked asymptotically normal in an example where filter was choosen suitably, and the noise was a L\'evy process. 
We will try to justify this phenomenon mathematically, under certain restricted assumptions.

\begin{flushleft}
{\it Key words:} stochastic differential equation, semimartingale noise, small noise asymptotics, drift estimation, threshold estimator, mighty convergence. \vspace{1mm}\\
{\it MSC2010:} {\bf 62F12, 62M05}; 60G52, 60J75
\end{flushleft}
\end{abstract} 

\section{Introduction}\label{sec:intro}

Let $(\Omega,\F,(\F_t)_{t\ge 0},\P)$ be a stochastic basis, on which an $\R^d$-valued stochastic process $X$ is 
defined via the stochastic integral equation
\begin{align}
X_t^\e = x + \int_0^t b(X_s^\e,\th_0)\,\df s + \e\cdot Q_t^\e,  \label{model}
\end{align}
where $x\in \R^d,\e > 0$, and $\th_0$ is an unknown parameter that belongs to a parameter space $\Theta_0$, which is an open bounded, convex subset of $\R^p$; 
we put $\Theta :=\ol{\Theta_0}$, the closure of $\Theta_0$; $b$ is a measurable function on $\R^d\times\Theta$; 
$Q^\e$ is a stochastic process of the form 
\[
Q^\e_t := \wt{Q}^\e_t + r_\e B^H_t
\]
where $\wt{Q}^\e$ is a semimartingale, $B^H$ is a fractional Brownian motion with the Hurst parameter $H\in (0,1)$, and 
$r_\e$ is a real number satisfying that $r_\e\to 0\,(\e\to 0)$. 
We also assume for the semimartingale  $\wt{Q}^\e$ that $\wt{Q}^\e_0=0$ and $\sup_{t\in [0,1]}\l|\wt{Q}^\e_t - Q_t\r|\to 0$ as $\e\to 0$ with probability one. 
Hence we have that 
\begin{align}
\sup_{t\in [0,1]}\l|Q^\e_t - Q_t\r| \to 0\quad a.s., \quad \e\to 0, \label{Q}
\end{align}

As is well known, $B^H$ is not a semimartingale and neither is $Q^\e$, but $Q^\e$ converges uniformly to a semimartingale $Q$ as $\e\to 0$ almost surely. 

Moreover, we suppose that the Doob-Meyer decomposition of $Q$ is given as follows. 
\begin{align}
Q_t = A_t + M_t\quad (t\ge 0),\quad A_0=M_0=0\quad a.s., \label{doob}
\end{align}
where $A$ is a process with finite variation, and $M$ is an $\F_t$-local martingale. 
Some typical examples for $Q^\e$ is given in Section \ref{sec:examples}.
We suppose that the process $X^\e=(X^\e_t)_{t\in [0,1]}$ is observed discretely in time: $\{X_\T\}_{k=0}^n$ with $\T=k/n$ (the index $\e$ is omitted in the notation), 
from $[0,1]$-interval. We denote by $\DX:=X_{t_k^n} -X_\t$ and $\D_n:=\T-\t=1/n$. 
Our interest is to estimate the value of the parameter $\th_0$ from the discrete samples under that  $n\to \infty$ as well as $\e\to 0$: {\it small noise asymptotics}. 

There are some practical advantages in small noise asymptotics: 
\begin{itemize}
\item {\it Statistical point of view}: 
the drift estimation by samples from a fixed finite time interval is justifiable under relatively mild conditions. Since we need to observe the process long time to achieve ``good" estimation of the drift without small noise assumption, 
we usually assume the asymptotics that the terminal time of observations goes to infinity, under which some technical conditions such as ``ergodicity" or ``uniform moment conditions" for the process need to be assumed. By a suitable scaling technique, we can regard such a long-term model as a small noise model, approximately. 
Then there is no need for those conditions, which are sometimes difficult to check in practice; 
see remarks in Section \ref{sec:long-term} for details.
 
\item {\it Computational point of view}: approximation of functionals of the process as $\e\to 0$ is often available in relatively easy-to-calculate form as in, e.g., 
Yoshida \cite{y92a} and Pavlyukevich \cite{pav11} among others, which is well applied to finance and insurance; 
see Takahashi \cite{t99}, Kunitomo and Takahashi \cite{kt01}, Takahashi and Yoshida \cite{ty04}, Uchida and Yoshida \cite{uy04b}, Pavlyukevich \cite{pav08} and references therein. 
\end{itemize}
That is why, using the small noise model is convenient to deal with both applications and statistical inference at the same time. 

Sampling problems for stochastic differential equations with small noise have been well studied by many authors in both theoretical and applied point of views. 
Some earlier works for {\it small-diffusion models} are found in the papers by Kutoyants \cite{k84,k94}, Genon-Catalot \cite{gc90} and Laredo \cite{la90}, and 
they have been developed in some directions by several authors: e.g., martingale estimating functions are studied by S{\o}rensen \cite{so00}, 
efficient estimation is investigated by S{\o}rensen and Uchida \cite{su03}, Gloter and S{\o}rensen \cite{gs09}; see also Uchida \cite{u04,u08},  
and asymptotic expansion approach is initiated by Yoshida \cite{y92a,y92b}, see also Uchida and Yoshida \cite{uy04b}, among others. 
Although those works are due to diffusion noise, more general noise model are also considered recently. 
For example, Long \cite{l09} and Ma \cite{m10} investigate the drift estimation of a L\'evy driven Ornstein-Uhlenbeck process; see also Long \cite{l10}, 
and Long {\it et al.} \cite{lss13} deal with the inference for non-linear drift under the small semimartingale noise. 
In our paper, we do not require that the noise is a semimartingale, but `approximately' a semimartingale in the sense of \eqref{Q}. 

The goal of this paper is statistical inference for the drift of the process under the asymptotics that noise vanishes, which is the same motivation as in Long {\it et al.} \cite{lss13}, 
but it was seen in our numerical study that our estimator performed better than theirs under finite samples. 
Long {\it et al.} \cite{lss13} consider the case where $Q^\e \equiv L$ is a L\'evy process (although it can be extended to a case of a semimartingale) 
independent of the dispersion parameter $\e$, 
and investigate the asymptotic behavior of the {\it least squares-type estimator (LSE)} defined by 
\begin{align}
\wh{\th}_{n,\e}^{LSE} := \arg\min_{\th\in\Theta} \Psi_{n,\e}(\th), \label{lse1}
\end{align}
where 
\begin{align}
\Psi_{n,\e}(\th)=\e^{-2}\D_n^{-1}\sum_{k=1}^n |\DX - b(X_\t,\th)\cdot \D_n|^2\label{lse2}
\end{align}
They show that the minimum contrast estimator $\wh{\th}_{n,\e}^{LSE}$ is $\e^{-1}$-consistent with the limit of a L\'evy functional as $\e\to 0$ 
and $n\to \infty$ with $n\e\to \infty$ under some mild conditions on the function $b$; see Theorems 4.1 and 4.2 in \cite{lss13}. 
The asymptotic distribution generally has a fat-tail, that causes us unsatisfactory performance even if $\e$ is small enough; 
see numerical results in \cite{lss13}, or Section \ref{sec:simulation} below. 
This would be due to `large' shocks by the driving noise. 
It will be easy to imagine that a `large' jump of $Q^\e$ makes much impact to the direction of drift, and make the drift estimation unstable. 
Therefore, cutting such `large' jumps could improve the performance. 
That is why, we consider the {\it threshold-type} estimator defined as follows: 
\begin{align}
\wh{\th}_{n,\e} := \arg\min_{\th\in\Theta} \Phi_{n,\e}(\th), \label{tte1}
\end{align}
where 
\begin{align}
\Phi_{n,\e}(\th)=\e^{-2}\D_n^{-1}\sum_{k=1}^n |\DX - b(X_\t,\th)\cdot \D_n|^2\I_{\{|\DX|\le \d_{n,\e}\}}, \label{tte2}
\end{align}
$\d_{n,\e}$ is a positive number, which is a threshold to eliminate `large' shocks causing bias to drift estimation. 
That is, the indicator $\I_{\{|\DX|\le \d_{n,\e}\}}$ plays a role of a filter to split increments with `large' and `small' magnitude of shocks; 
see Shimizu \cite{s06}, or Shimizu and Yoshida \cite{sy06} for the fundamental idea of those filters. 
It would be intuitively clear that if $\d_{n,\e}\to \infty$ then $\wh{\th}_{n,\e}^{LSE}$ and $\wh{\th}_{n,\e}$ can be asymptotically equivalent. 
However, we will show that the same thing holds true even if $\d_{n,\e}\to 0$ by choosing the sequence $\d_{n,\e}$ carefully. 
Moreover we can see in numerical study that $\wh{\th}_{n,\e}$ has much better finite-sample performance than that of $\wh{\th}_{n,\e}^{LSE}$. 
Furthermore, we will show the {\it mighty convergence} (the convergence of moments) for $\wh{u}_{n,\e}:=\e^{-1}(\wh{\th}_{n,\e} - \th_0)$, 
which is a stronger result than those in \cite{lss13}: 
\[
\E\l[f(\wh{u}_{n,\e})\r] \to \int_{\R^p} f(z)\,{\cal L}_u(\df z),
\]
as $\e\to 0$ and $n\to \infty$ for every continuous function $f$ of at most polynomial growth, 
where ${\cal L}_u$ is the asymptotic distribution of $\wh{u}_{n,\e}$. 

As is described above, the asymptotic distribution ${\cal L}_u$ is generally not normal unless the limiting process $Q$ is a Wiener process. 
However, it is interesting to note that we sometimes encounter a phenomenon that $\wh{u}_{n,\e}$ seems asymptotically normal 
in numerical study. 
This may indicate that our filtered LSE could also be asymptotically normal if we choose $\d_{n,\e}$ in a different way as previous, 
otherwise we may just observe it as an `approximate' phenomenon possibly when some appropriate conditions are satisfied. 
Although a justification of this phenomena under the discrete sampling is still an open problem, 
we will add some discussion on these points in Section \ref{sec:normal?}.

The paper is organized as follows. In Section \ref{sec:main}, we prepare notation and assumptions, and present the main results under discrete samples. 
In particular, Section \ref{sec:Q} is devoted to investigating some technical conditions for $Q$. We will give some easy-to-check sufficient conditions for those 
when the noise $Q$ is a L\'evy process. 
In Section \ref{sec:simulation}, we will show an advantage of our estimator compared with the usual LSE via numerical study, and we further observe 
the asymptotic distribution seems normal. Finally, we will try to give a theoretical explanation to those asymptotic phenomena in Section \ref{sec:normal?}. 
All the proofs of main theorems are given in Section \ref{sec:proof}.

\section{Main results}\label{sec:main}

\subsection{Notation and assumptions}\label{sec:notation}

We use the following notation: 
\begin{itemize}
\item For a process $Y=(Y_t)_{t\in [0,1]}$, $\|Y\|_{L^p} = (\E|Y_1|^p)^{1/p}$ $(p>0)$ and $\|Y\|_*:=\sup_{t\in [0,1]}|Y_t|$. 


\item Given a multilinear form ${\cal M}=\{M^{(i_1,\dots,i_K)}\,:\,i_k=1,\dots,d_k; k=1,\dots,K\} \in \R^{d_1}\otimes \dots \otimes \R^{d_K}$ and vector $u_k=(u_k^{(i)})_{i\le d_k} \in \R^{d_k}$, 
we write 
\[
{\cal M}[u_1,\dots,u_K] = \sum_{i_1=1}^{d_1}\dots\sum_{i_K=1}^{d_K}M^{(i_1,\dots,i_K)} u_1^{(i_1)} \dots u_K^{(i_K)}. 
\]
Note that the above form is well-defined when the $j$th dimension of ${\cal M}$ (the number of $i_j$) and that of $u_j$ are the same. 
When some of $u_k$ is missing in ``${\cal M}[u_1,\dots,u_K]$", the resulting form is regarded as a multilinear form again; e.g., ${\cal M}[u_3,\dots,u_K] \in \R^{d_1}\otimes \R^{d_2}$. 
For example, when ${\cal M}$ is a vector 
$M=(M_i)_{1\le i\le d}$, ${\cal M}[x] = M^\top x$ for $x\in \R^d$, which is the inner product, and when $\wt{{\cal M}}$ is a matrix $M=(M_{ij})_{1\le i\le d_1; 1\le j\le d_2}$, 
$\wt{{\cal M}}[u,v]$ for $u=(u_i)_{1\le i\le d_1}$ and $v=(v_i)_{1\le i\le d_2}$ is the quadratic form $u^\top M v$, and 
$\wt{{\cal M}}[v]=M v\in \R^{d_1}$ or $\wt{{\cal M}}[u] = M^\top u\in \R^{d_2}$, among others. 
The correspondences of dimensions will be clear from the context. We also use the notation ${\cal M}[u^{\otimes K}]:={\cal M}[u_1,\dots,u_K]$ when $u_1=\dots=u_K$. 

\item For $a=(a_1,\dots,a_m)^\top\in \R^m$, $\n_a=(\p/\p{a_1},\dots,\p/\p{a_m})$. 
Moreover, we denote by $\n_a^k:=\n_a\otimes \n_a^{k-1}$ with $\n_a^0\equiv 1$. That is, $\n_a^k$ forms a multilinear form, e.g., $\n_a^2 = \n_a^\top \n_a$ in a matrix form.

\item For a multilinear form ${\cal M}$, $|{\cal M}|^2$ denotes the sum of the squares of each element of ${\cal M}$.  

\item $C$ is often used as a generic positive constant that may differ from line to line. 
Moreover, we write $a \lesssim b$ if $a \le Cb$ almost surely.  

\item $C^{k,l} (\R^d\times\Theta;\R^q)$ denotes the space of functions $f(x,\th):\R^d\times\Theta\to \R^q$ that is $k$ and $l$ times differentiable with respect to $x$ and $\th$, 
respectively. Moreover, $C^{k,l}_\uparrow (\R^d\times\Theta;\R^q)$ denotes a subclass of $f \in C^{k,l}(\R^d\times\Theta;\R^q)$ that is of polynomial growth uniformly in $\th\in\Theta$: 
$\sup_{\th\in\Theta}|\n_x^\a\n_\th^\b f(x,\th)| \lesssim (1 + |x|)^C$ for any $\a\le k$ and $\b\le l$. 

\item For a function $g(x,\th):\R^d\times \Theta \to \R^q$, we write $g_{k-1}(\th):=g(X_\t,\th)$. Moreover, denote by $\chi_k(\th):=\DX - b_{k-1}(\th)\D_n (\in \R^d)$. 

\item All the asymptotic symbol are described under $n\to \infty$ and $\e\to 0$ unless otherwise noted. 

\end{itemize}

Using the above notation, our estimating function given in \eqref{tte2} is rewritten as 
\begin{align*}
\Phi_{n,\e}(\th) 
&= \e^{-2}\D_n^{-1}\sum_{k=1}^n I[\chi_{k}^{\otimes 2}(\th)]\I_{\{|\DX|\le \d_{n,\e}\}},  
\end{align*}
where $I$ is the $d\times d$ identity matrix.  

We make the following assumptions on the model \eqref{model}: 
\begin{description}
\item[{\bf A1}] $|b(x,\th) - b(y,\th)|\lesssim |x-y|$ for each $x,y\in \R^d$ and $\th\in\Theta$. 
\end{description}
Under this assumption, the ordinary differential equation
\[
\df X^0_t = b(X_t^0,\th_0)\,\df t,\quad X_0^0 = x, 
\]
has the unique solution $X^0=(X_t^0)_{t\ge 0}$. 
\begin{description}
\item[{\bf A2}] $b\in C^{2,3}_\uparrow(\R^d\times\Theta;\R^d)$. 
\item[{\bf A3}] $\th \ne \th_0$ $\Leftrightarrow$ $b(X_t^0,\th)\ne b(X_t^0,\th_0)$ for at least one value of $t\in [0,1]$. 
\item[{\bf A4}] $I(\th_0):=\int_0^1 \n_\th b(X_t^0,\th_0)^\top \n_\th b(X_t^0,\th_0)\,\df t\ (\in \R^p \otimes \R^p)$ is positive definite, 
\end{description}

We further make the following conditions for the limiting process $Q$ of $Q^\e$. 
\begin{description}
\item[{\bf Q1[$\mb{\g}$]}] There exists some $\g>0$ such that, for any $k=1,\dots,n$, 
\[
\P\l\{\sup_{t\in (\t,\T]}|Q_t- Q_\t| > \D_n^\g\Big|\F_\t\r\}=o_p(1).
\] 
\item[{\bf Q2[$\mb{q}$]}] For $q>0$ and processes $A$ and $M$ given in \eqref{doob}, the total variation of $A$, say $TV(A):=\int_0^1 |\df A_t|$, 
and the {\it quadratic variation} $[M,M]$ satisfy that
\[
\E\l[TV(A)^q\r] + \E\l[[M,M]_1^{q/2}\r] <\infty. 
\] 
\end{description}

Although the condition Q1 seems {\it ad hoc}, we can give some easy-to-check conditions in some important cases where, e.g., 
$Q$ is a L\'evy process satisfying Q2[$q$] for some $q>0$; see Section \ref{sec:Q} for details.

\subsection{Asymptotic behavior of threshold-type estimators}\label{sec:theorem}
\begin{thm}\label{thm:consist} Suppose A1--A3, Q1[$\g$], and that a sequence $\{\d_{n,\e}\}$ satisfies that 
\begin{align}
\d_{n,\e}\D_n^{-1}\to \infty,\quad \e\,\D_n^{\g}\,\d_{n,\e}^{-1} =O(1),\quad n\e\to \infty.  \label{d-1}
\end{align}
Then 
\[
\wh{\th}_{n,\e}\toP \th_0. 
\]
\end{thm}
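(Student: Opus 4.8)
The plan is to apply the standard argmin-consistency scheme: reduce the assertion to a uniform-in-$\th$ convergence in probability of a suitably normalized contrast toward a deterministic limit that is uniquely minimized at $\th_0$. Since $\arg\min$ is invariant under affine transformations with positive scaling, I would work with
\[
G_{n,\e}(\th):=\e^{2}\l[\Phi_{n,\e}(\th)-\Phi_{n,\e}(\th_0)\r],
\]
whose minimizer is again $\wh{\th}_{n,\e}$. Writing $\chi_k(\th)=\chi_k(\th_0)-(b_{k-1}(\th)-b_{k-1}(\th_0))\D_n$ and expanding $|\chi_k(\th)|^2-|\chi_k(\th_0)|^2$, the $\th$-independent term $|\chi_k(\th_0)|^2$ (which carries the troublesome $\e^2|\D_k^nQ^\e|^2$) cancels in the difference, leaving
\[
G_{n,\e}(\th)=\sum_{k=1}^n\D_n\,\l|b_{k-1}(\th)-b_{k-1}(\th_0)\r|^2\,\filter-2\sum_{k=1}^n\chi_k(\th_0)\cdot\l(b_{k-1}(\th)-b_{k-1}(\th_0)\r)\filter,
\]
so it suffices to identify the limits of these two sums uniformly on the compact set $\Theta$.

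For the quadratic sum I would prove uniform convergence to
\[
\Y(\th):=\int_0^1\l|b(X_t^0,\th)-b(X_t^0,\th_0)\r|^2\,\df t.
\]
Splitting off the excised part $\sum_k\D_n|b_{k-1}(\th)-b_{k-1}(\th_0)|^2(1-\filter)$, the remaining unfiltered Riemann sum tends to $\Y(\th)$ because $\|X^\e-X^0\|_*\toP0$ (from A1, \eqref{Q} and Gronwall's inequality) together with the polynomial-growth smoothness A2 of $b$ yields a uniform-in-$\th$ law of large numbers; continuity of $\Y$ and $\Y(\th_0)=0$ are then immediate.

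For the cross sum, decompose $\chi_k(\th_0)=\int_\t^\T[b(X_s^\e,\th_0)-b_{k-1}(\th_0)]\,\df s+\e\,\D_k^nQ^\e$ using the model \eqref{model}. By A1--A2 and Q1[$\g$] the drift remainder sums to $o_p(1)$, while the noise part equals $\e$ times $\sum_k\D_k^nQ^\e\cdot(b_{k-1}(\th)-b_{k-1}(\th_0))\filter$; I would bound the latter by approximating the unfiltered discrete sum by the stochastic integral $\int_0^1(b(X_t^0,\th)-b(X_t^0,\th_0))\,\df Q_t$, which is finite since $Q$ is a semimartingale via \eqref{doob}--\eqref{Q}, so that the sum is $O_p(1)$ uniformly in $\th$ and the excised part is handled by the threshold estimate below. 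Multiplication by $\e\to0$ then kills the whole cross term, giving $G_{n,\e}(\th)\toP\Y(\th)$ uniformly; by A3 one has $\Y(\th)=0$ if and only if $\th=\th_0$, and since $\Theta$ is compact and $\Y$ continuous with unique minimizer $\th_0$, the argmin theorem delivers $\wh{\th}_{n,\e}\toP\th_0$.

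The hard part will be the filter. I must show that the excised mass $\sum_k\D_n|b_{k-1}(\th)-b_{k-1}(\th_0)|^2(1-\filter)$ is asymptotically negligible and that $\filter$ does not distort the cross sum, which reduces to the uniform estimate $\max_{1\le k\le n}\P\{|\DX|>\d_{n,\e}\mid\F_\t\}=o_p(1)$; because $\sum_k\D_n=1$, such a uniform bound already forces the excised sums to vanish, so the precise constants are immaterial. This is exactly where \eqref{d-1} enters: $\d_{n,\e}\D_n^{-1}\to\infty$ places the drift contribution $b_{k-1}(\th_0)\D_n$ well below the threshold, while $\e\,\D_n^\g\,\d_{n,\e}^{-1}=O(1)$, together with Q1[$\g$] and \eqref{Q}, controls the noise contribution $\e\,\D_k^nQ^\e$ against $\d_{n,\e}$ on the event that $(\t,\T]$ carries no large shock, whose complement has conditional probability $o_p(1)$.
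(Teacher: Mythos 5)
Your proposal is correct and takes essentially the same route as the paper: the same expansion of $\e^2\l[\Phi_{n,\e}(\th)-\Phi_{n,\e}(\th_0)\r]$ into a quadratic term and a cross term, the same reduction of the filter's effect to the negligibility estimate $\P\l(|\DX|>\d_{n,\e}\mid\F_\t\r)=o_p(1)$ derived from Q1[$\g$] and \eqref{d-1}, and the same argmin/identifiability conclusion via A3 on the compact $\Theta$. The only difference is organizational: the paper compares $\wt{\Phi}_{n,\e}$ with the unfiltered LSE contrast $\wt{\Psi}_{n,\e}$ and cites Long et al.\ \cite{lss13} for the latter's uniform convergence (its Lemmas \ref{lem:3.3} and \ref{lem:3.5} then absorbing the excised terms), whereas you re-derive those unfiltered limits inline.
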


\begin{remark}
Condition \eqref{d-1} ensures a kind of ``negligibility": 
\begin{align*}
\P\l(|\DX|> \d_{n,\e} |\F_\t\r) \to 0, 
\end{align*}
which makes $\wh{\th}_{n,\e}$ asymptotically equivalent to $\wh{\th}_{n,\e}^{LSE}$; see \eqref{Q1} in the proof. 
\end{remark}

\begin{thm}\label{thm:asym-dist} Suppose the same assumptions as in Theorem \ref{thm:consist}, and further A4. 
Then 
\[
\e^{-1}(\wh{\th}_{n,\e} - \th_0) \toP \z:=I^{-1}(\th_0)\int_0^1 \n_\th b(X_t^0,\th_0)[\df Q_t],  
\]
where the square bracket $[\df Q_t]$ is the quadratic form of $\df Q_t$ for the matrix $\n_\th b(X_t^0,\th_0)$ in this case; see Section \ref{sec:notation}. 
\end{thm}

\if0
\begin{remark}\label{rem:ne}
In Long {\it et al.} \cite{lss13}, Theorem 2.2 assumes that 
\[
n\e\to \infty.
\] 
Note that this condition is implied by \eqref{d-1} which implies that   
\[
n\d_{n,\e}\to \infty,\quad  \e/\d_{n,\e} \to \infty. 
\]
\end{remark}
\fi

\begin{thm}\label{thm:mighty} Suppose the same assumptions as in Theorem \ref{thm:asym-dist}, and that Q2[$q$] holds true for any $q>0$. 
Then, it follows for every continuous function $f:\R^p\to \R$, of polynomial growth that 
\[
\E\l[f\l(\e^{-1}(\wh{\th}_{n,\e} - \th_0)\r)\r] \to \E[f(\z)], 
\]
where $\z$ is given in Theorem \ref{thm:asym-dist}. 
\end{thm}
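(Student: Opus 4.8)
The plan is to promote the in-probability convergence of Theorem~\ref{thm:asym-dist} to convergence of moments by establishing \emph{uniform integrability}. Writing $\wh u_{n,\e}:=\e^{-1}(\wh\th_{n,\e}-\th_0)$, Theorem~\ref{thm:asym-dist} already gives $\wh u_{n,\e}\toP\z$, so by continuity $f(\wh u_{n,\e})\toP f(\z)$. Since $f$ has polynomial growth, $|f(u)|\le C(1+|u|)^c$ for some $c>0$, and $\E[|f(\z)|]<\infty$ because $\z$ is a bounded deterministic-kernel linear functional of $Q$, whence $\E[|\z|^M]\lesssim \E[\|Q\|_*^M]<\infty$ for all $M$ by Q2[$M$] and the Burkholder--Davis--Gundy inequality. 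Convergence in probability together with uniform integrability of $\{f(\wh u_{n,\e})\}$ yields the claim, and uniform integrability follows once I prove the \emph{uniform moment bound} $\sup_{n,\e}\E[|\wh u_{n,\e}|^M]<\infty$ for every $M>0$.

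I reduce this moment bound to a \emph{polynomial-type large deviation inequality} (PLDI) in the spirit of quasi-likelihood analysis. Consider the rescaled random field
\[
\Y_{n,\e}(u):=-\tfrac12\big(\Phi_{n,\e}(\th_0+\e u)-\Phi_{n,\e}(\th_0)\big),\qquad u\in U_{n,\e}:=\e^{-1}(\Theta-\th_0),
\]
which $\wh u_{n,\e}$ maximizes, with $\Y_{n,\e}(0)=0$. Hence $\{|\wh u_{n,\e}|\ge r\}\subset\{\sup_{|u|\ge r}\Y_{n,\e}(u)\ge 0\}$, and integrating $\P(|\wh u_{n,\e}|>r)$ against $Mr^{M-1}\,\df r$ shows that the moment bound follows from
\[
\P\Big(\sup_{u\in U_{n,\e},\,|u|\ge r}\Y_{n,\e}(u)\ge 0\Big)\le \frac{C_L}{r^L},
\]
valid uniformly in $(n,\e)$ for each $L>0$.

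To prove the PLDI I Taylor-expand $b$ in $\th$ (licensed by A2) to obtain the locally quadratic decomposition
\[
\Y_{n,\e}(u)=u^\top\xi_{n,\e}-\tfrac12\,\G_{n,\e}[u^{\otimes 2}]+R_{n,\e}(u),
\]
where $\xi_{n,\e}:=\e^{-1}\sum_{k=1}^n \n_\th b_{k-1}(\th_0)^\top\chi_k(\th_0)\filter$ is the score (converging to $\int_0^1 \n_\th b(X_t^0,\th_0)[\df Q_t]$), $\G_{n,\e}:=\D_n\sum_{k=1}^n \n_\th b_{k-1}(\th_0)^\top\n_\th b_{k-1}(\th_0)\filter\to I(\th_0)$, and $R_{n,\e}$ collects the higher-order terms. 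The estimate splits into two regimes. For moderate $|u|$ (where $\th_0+\e u$ stays near $\th_0$) I use A4: since $\G_{n,\e}\to I(\th_0)$ positive definite, $\G_{n,\e}[u^{\otimes2}]\gtrsim|u|^2$ off an event of arbitrarily high polynomial order, and controlling $R_{n,\e}$ makes $\Y_{n,\e}(u)\le|u|\,|\xi_{n,\e}|-c|u|^2+(\text{small})$, which is negative for $|u|\ge r$ unless $|\xi_{n,\e}|\gtrsim r$; a Chebyshev bound with the uniform estimate $\sup_{n,\e}\E[|\xi_{n,\e}|^M]<\infty$ then gives the $r^{-L}$ tail. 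For $\th$ bounded away from $\th_0$ (that is, $|u|$ of order $\e^{-1}$) the quadratic expansion is no longer dominant, and I instead invoke the global identifiability A3: the leading deterministic part of $\Phi_{n,\e}(\th)-\Phi_{n,\e}(\th_0)$ behaves like $\e^{-2}\int_0^1|b(X_t^0,\th)-b(X_t^0,\th_0)|^2\,\df t>0$, which dominates the $O(\e^{-1})$ stochastic fluctuation, so $\Y_{n,\e}\le0$ there except on a set of small probability. The moment inputs $\sup_{n,\e}\E[|\xi_{n,\e}|^M]<\infty$, the rate of $\G_{n,\e}\to I(\th_0)$, and the control of $R_{n,\e}$ all rest on Q2[$M$] (for all $M$) via BDG applied to the martingale part of $Q$ and a direct bound on $TV(A)$, together with the preliminary moment bound $\sup_{n,\e}\E[\|X^\e\|_*^M]<\infty$ (Gronwall using A1, and A2 to convert $\|X^\e\|_*$-moments into moments of $\n_\th b_{k-1}$). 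Finally, a Sobolev inequality on $U_{n,\e}$, using that $\Y_{n,\e}$ is three times differentiable in $u$ by A2, converts these pointwise-in-$u$ moment estimates into the required bound on the supremum.

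The main obstacle is the PLDI itself, and within it two points deserve care. First, the parameter domain $U_{n,\e}=\e^{-1}(\Theta-\th_0)$ \emph{expands} as $\e\to0$, so the large-$|u|$ (global) regime must be controlled uniformly; this is precisely where A3, rather than A4, is essential and where one must quantify the consistency of Theorem~\ref{thm:consist} with a polynomial rate. Second, the filter $\filter$ is non-smooth, so before applying BDG and the moment machinery one must show that replacing each filtered sum by its unfiltered counterpart costs only a negligible term; this is handled by the negligibility $\P(|\DX|>\d_{n,\e}\mid\F_\t)\to0$ coming from \eqref{d-1}, upgraded to a moment statement under Q2[$M$]. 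Once the PLDI is established, the uniform moment bound, uniform integrability, and hence $\E[f(\wh u_{n,\e})]\to\E[f(\z)]$ follow routinely.
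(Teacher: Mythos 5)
You take essentially the same route as the paper: reduce the convergence of moments to the uniform bound $\sup_{n,\e}\E|\wh{u}_{n,\e}|^M<\infty$ (uniform integrability), reduce that bound to a polynomial-type large deviation inequality for the rescaled contrast field, and feed into it the locally quadratic Taylor expansion, the score/Hessian moment bounds obtained via BDG under Q2[$M$] with the filter handled through the negligibility \eqref{d-1}, the identifiability pair A3--A4, and a Sobolev-inequality control of the supremum. The only difference is presentational: the paper obtains the PLDI by verifying the sufficient conditions [A1$''$], [A4$'$], [A6], [B1], [B2] of Yoshida \cite{y11}, Theorem 3(c), whose internal proof is precisely the two-regime (moderate $|u|$ versus $|u|$ of order $\e^{-1}$) argument you sketch directly.
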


\subsection{Examples and remarks}\label{sec:examples}

\subsubsection{$\a$-stable noise}
Suppose $d=1$, and that $Q^\e =\s B + \rho_\e S$, where $\rho_\e\to \rho\in [0,\infty)$, $\s\ge 0$ is a constant, $B$ is a Brownian motion, 
and $S$ is a standard $\a$-stable process with stability index $\a\in (0,2)$ and the skewness parameter $\b\in [-1,1]$, 
which is denoted by $S_\a(1,\b,0)$; see, e.g., Cont and Tankov \cite{ct04}, page 94 for this notation. 
In this case, the limiting variable $\z$ in Theorem \ref{thm:asym-dist} becomes 
\begin{align*}
\z=I^{-1}(\th_0) \l\{ \s D_2\cdot Z + \rho D_\a\cdot S_\a(1,\b,0)\r\}, 
\end{align*}
where $D_\a = \l(\int_0^1 \{\p_\th b(X_t^0,\th_0)\}^\a \,\df t\r)^{1/\a}$ and $Z$ is a standard Gaussian variable. 
Hence if $\rho= 0$ then the estimator is asymptotically normal; see also Long {\it et al.} \cite{lss13}. 

\subsubsection{Markovian noise}
Let us consider the case where 
\begin{align}
\df Q_t^\e = \e_1 a(X_t^\e)\,\df W_t +  \e_2 c(X_{t-}^\e)\,\df Z_t, \label{Levy driven sde}
\end{align}
where $\e_j> 0\ (j=1,2)$, $a,c$ are some suitable functions, and $Z$ is a pure jump L\'evy process. 
\begin{itemize}
\item Case of $\rho_\e:=\e_2/\e_1 \to 0$: we can reparametrize as $\wt{\e}:=\e\cdot\e_1$ to obtain that 
\[
\df X_t^\e = b(X_t^\e,\th_0)\,\df t + \wt{\e}\cdot \df \wt{Q}^\e_t, 
\]
with 
\[
\df \wt{Q}^\e_t = a(X_t^\e)\,\df W_t + \rho_\e c(X_{t-}^\e)\,\df Z_t. 
\]
Then $\wt{\e}^{-1}(\wh{\th}_{n,\e} - \th_0)$ is asymptotically normal as in S{\o}rensen and Uchida \cite{su03}. 
We can verify that 
\[
\wt{Q}^\e_t \to Q:=\int_0^t a(X_s^0)\,\df W_t\quad a.s., \quad \e\to 0, 
\]
uniformly in $t\in [0,1]$ from the fact that 
the stochastic integrals are continuous with respect to the {\it u.c.p. topology}; see Theorem II.11 by Protter \cite{p04}, 
and the uniform convergence of $X^\e \to X^0$ on compact sets; see Theorem IX.4.21 by Jacod and Shiryaev \cite{js03}. 

\item Case of $\rho_\e:=\e_2/\e_1\to \rho \in (0,\infty)$: we can use the same model as above, 
and the asymptotic distribution of $\wt{\e}^{-1}(\wh{\th}_{n,\e} - \th_0)$ is a convolution of stochastic integrals with respect to $W$ and $Z$.

\item Case of $\rho_\e:=\e_2/\e_1\to\infty$: we can reparametrize as $\ol{\e}:=\e\cdot\e_2$ to obtain that 
\[
\df X_t^\e = b(X_t^\e,\th_0)\,\df t + \ol{\e}\cdot \df \ol{Q}^\e_t, 
\]
with 
\[
\df \ol{Q}^\e_t = \rho_\e^{-1} a(X_t^\e)\,\df W_t + c(X_{t-}^\e)\,\df Z_t. 
\]
Then the asymptotic distribution of $\ol{\e}^{-1}(\wh{\th}_{n,\e} - \th_0)$ is written by a stochastic integral with respect to $Z$ only.
\end{itemize}

\subsubsection{Long-term observations}\label{sec:long-term}

Consider a `long-term' discretely observed model: 
\begin{align}
X_t =x +  \int_0^t b(X_v)\,\df v + Q_t, \label{original}
\end{align}
and $X$ is observed at $0=t_0^n<t^1_n<,\dots,<t_n^n=:T$ for a fixed $T>0$ `large enough', 
where $Q$ is an $\a$-stable process: $Q_1\sim S_\a(\s,\b,0)$ with index $\a\in (1,2)$. 
To estimate the drift function $b$, we sometimes assume a parametric model $b(x)=b(x,\th)$ and estimate $\th$ 
under the assumption that $T\to \infty$. 
However, in a standard theory of parametric inference, we usually need an ``ergodicity" or uniform moment conditions such as $\sup_{t>0}\E|X_t|^m<\infty$ for any $m>0$, 
which are often restrictive conditions.  

Now, transform the model by $v=T u$, $t=Ts$, and divide the both sides of \eqref{original} by $T$ to obtain 
\[
Y_{s} = \frac{x}{T} +  \int_0^{s} b(T\cdot Y_u,\th)\,\df u + \frac{1}{T}Q_{Ts},\quad s\in [0,1], 
\]
where $Y_s = T^{-1}X_{T s}$. Since $Q_{Ts} =^d T^{1/\a}Q_s$ by the {\it self-similarity} of stable processes, we can regard that $Y$ is a (weak) solution to the following SDE: 
\[
Y_{s} = \frac{x}{T} +  \int_0^{s} b(T\cdot Y_u,\th)\,\df u + T^{1/\a-1}\wt{Q}_{s}, 
\]
where $\wt{Q}_{s}$ is also an $\a$-stable process such that $\wt{Q}_1\sim S_\a(\s,\b,0)$. 
Suppose that $\e:=T^{1/\a-1}$ is small enough for given $T$, and putting $\wt{b}(x,\th)=b(T x,\th)$ and $\wt{x}=\e^{\a/(\a-1)}x$, 
we can reformulate the model as
\begin{align}
Y_s = \wt{x} + \int_0^s \wt{b}(Y_u,\th)\,\df u + \e \cdot \wt{Q}_s, \quad s\in [0,1], \label{transform}
\end{align}
with $\e>0$ small enough, and it is interpreted as a small noise SDE.

Given a ``non-ergodic" model as in \eqref{original} with long-term observations, 
we can reformulate it to a small noise model \eqref{transform} with known constant $\e=T^{1/\a-1}$ by plotting the data $\{Y_{s_k^n}:=T^{-1}X_{Ts_k^n}\}_{k=0}^n$ with $s_k^n=\T/T$ and $\wt{x}=Y_0$ in the interval $[0,1]$. Then, although $\wt{b}$ depends on $T$, 
we can formally use a estimator of $\th$ in $\wt{b}$ under the small noise model that does not require any restrictive condition.

\subsection{On conditions for $Q$}\label{sec:Q}

Let us investigate some sufficient conditions to ensure Q1 and Q2 when $Q$ is specified. 

An important case is when $Q$ is a L\'evy process such that the {\it characteristic exponent}: $\psi(u) := \log \E\l[\exp(i u^\top Q_1)\r]$, is given by 
\begin{align}
\psi(u) = ib^\top u - \frac{\s^2}{2}u^\top u + \int_{\R^d}\l(e^{iu^\top z} - 1- \frac{iu^\top z}{1+|z|^2}\r)\,\nu(\df z),\quad u\in \R^d, \label{Q:levy} 
\end{align}
where $b\in \R^d$, $\nu$ is the L\'evy measure with $\nu(\{0\})=0$ and $\int_{|z|\le 1}|z|^2\,\nu(\df z) < \infty$. 

As a simple case such that $Q$ is a Wiener process, where $b=0,\nu\equiv 0$, 
it follows from the property of the stationary, independent increments that 
\begin{align*}
\P\l\{\sup_{t\in (\t,\T]}|W_t-W_\t| > \D_n^\g\Big|\F_\t\r\} 
&=\P\l\{\sup_{t\in (0,\D_n]}|W_t| > \D_n^\g\r\} \\
&= 2\l(1-\Phi(\D_n^\g/\sqrt{\D_n})\r) \to 0,  
\end{align*}
for any $\g\in (0,1/2)$, where $\Phi$ is a standard normal distribution function; for the last equality, see, e.g., Doob \cite{d49}, or Boukai \cite{b90}.  

When $Q$ is a stable process, we have the following result. 
\begin{prop}\label{prop:stable}
When $Q$ is a symmetric $\a$-stable process with $\a\in (1,2)$. Then Q1[$\g$] holds true for any $\g\in (0,\a^{-1})$. 
\end{prop}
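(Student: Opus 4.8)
The plan is to exploit the two defining features of a symmetric stable process—stationary independent increments and self-similarity—and then control the running supremum by a maximal inequality.

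First I would remove the conditioning. Since $Q$ is a L\'evy process it has stationary and independent increments, so for $t\in(\t,\T]$ the increment $Q_t-Q_\t$ is independent of $\F_\t$ and distributed as $Q_{t-\t}$. Hence the conditional probability appearing in Q1[$\g$] collapses to the non-random quantity
\[
\P\l\{\sup_{s\in(0,\D_n]}|Q_s|>\D_n^\g\r\},
\]
and since this is deterministic, proving Q1[$\g$] amounts to showing it tends to $0$ as $n\to\infty$ (so the $o_p(1)$ is in fact an $o(1)$).

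Next I would bound the running supremum via Doob. For $\a\in(1,2)$ the symmetric stable process is integrable with zero mean, hence an $\F_t$-martingale with c\`adl\`ag paths; this is exactly where the hypothesis $\a>1$ is used. Fixing any $p\in(1,\a)$—such $p$ exists precisely because $\a>1$—one has $\E|Q_1|^p<\infty$, and Doob's $L^p$-maximal inequality (valid as $p>1$) together with the self-similarity $Q_{\D_n}\stackrel{d}{=}\D_n^{1/\a}Q_1$ yields
\[
\E\l[\sup_{s\in(0,\D_n]}|Q_s|^p\r]\le\l(\frac{p}{p-1}\r)^p\E|Q_{\D_n}|^p=\l(\frac{p}{p-1}\r)^p\D_n^{p/\a}\,\E|Q_1|^p.
\]
Markov's inequality then gives
\[
\P\l\{\sup_{s\in(0,\D_n]}|Q_s|>\D_n^\g\r\}\le\D_n^{-p\g}\,\E\l[\sup_{s\in(0,\D_n]}|Q_s|^p\r]\lesssim\D_n^{p(1/\a-\g)},
\]
and since $\g<\a^{-1}$ the exponent $p(1/\a-\g)$ is strictly positive, so the bound tends to $0$ as $\D_n=1/n\to0$. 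This completes the argument.

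The only real subtlety, and the place I expect any friction, is the clean invocation of the maximal inequality: it forces $p>1$ and thereby explains why $\a>1$ is assumed, since for $\a\le1$ one loses both the martingale property and the moment $\E|Q_1|^p$ with $p>1$. A shorter route that bypasses moments is to apply self-similarity to the whole path, $\sup_{s\in(0,\D_n]}|Q_s|\stackrel{d}{=}\D_n^{1/\a}\sup_{s\in(0,1]}|Q_s|$, whence the probability equals $\P\{\sup_{s\in(0,1]}|Q_s|>\D_n^{\g-1/\a}\}$; here $\D_n^{\g-1/\a}\to\infty$ because $\g<\a^{-1}$, and the c\`adl\`ag supremum is a.s.\ finite, so the probability vanishes. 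This alternative works for every $\a\in(0,2)$ but does not by itself account for the $\a>1$ hypothesis, so I would present the maximal-inequality version as the main proof.
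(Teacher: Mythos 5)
Your proof is correct, and it reaches the conclusion by a genuinely different route at the key technical step. The first move — using stationary independent increments to collapse the conditional probability in Q1[$\g$] to the deterministic quantity $\P\{\sup_{s\in(0,\D_n]}|Q_s|>\D_n^\g\}$ — is exactly what the paper does. But where the paper then simply invokes the maximal inequality (3.5) of Joulin (2007) for stable stochastic integrals, which immediately yields the sharp rate $O(\D_n^{1-\g\a})$, you rebuild the estimate from scratch: the martingale property of a centered integrable L\'evy process (this is where $\a>1$ enters), Doob's $L^p$ inequality with $1<p<\a$, self-similarity $Q_{\D_n}\stackrel{d}{=}\D_n^{1/\a}Q_1$, and Markov's inequality, giving $O(\D_n^{p(1/\a-\g)})$. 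Your exponent is strictly smaller than the paper's $1-\g\a$ for any $p<\a$ (approaching it as $p\uparrow\a$), but this costs nothing: everywhere Q1[$\g$] is used in the paper (Lemma \ref{lem:3.3}, Theorem \ref{thm:infinite-activity}) only the $o_p(1)$ property matters, never the rate. What each approach buys: the paper's citation is shorter and gives the optimal polynomial rate; yours is elementary, self-contained, and makes transparent why the hypothesis $\a\in(1,2)$ suffices. Your second, "shorter route" — $\sup_{s\in(0,\D_n]}|Q_s|\stackrel{d}{=}\D_n^{1/\a}\sup_{s\in(0,1]}|Q_s|$ together with a.s. finiteness of the c\`adl\`ag supremum — is also valid and in fact shows that Q1[$\g$] for $\g\in(0,\a^{-1})$ holds for every $\a\in(0,2)$, indicating that the restriction $\a>1$ in the proposition reflects the paper's broader setting (integrable noise for drift estimation) rather than a genuine necessity for Q1 itself.
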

\begin{proof}
Due to the maximal inequality (3.5) in Joulin \cite{j07}, we have 
\begin{align*}
\P\l\{\sup_{t\in (\t,\T]}|Q_t - Q_\t| > \D_n^\g\Big|\F_\t\r\} 
&=\P\l\{\sup_{t\in (0,\D_n]}|Q_t| > \D_n^\g\r\} =O\l( \D_n^{1 - \g\a}\r)\to 0
\end{align*}
as $n\to \infty$. 
\end{proof}

We can also present some sufficient conditions to Q1[$\g$] when a process $Q$ is a more general L\'evy process: let 
\begin{align*}
h(x) &:= \int_{|z|>x}\nu(\df z) + x^{-2}\int_{|z|\le x} |z|^2\,\nu(\df z)  \\
&\quad + x^{-1}\l|b + \int_{|z|\le x} \frac{z|z|^2}{1 + |z|^2}\,\nu(\df z) - \int_{|z|> x} \frac{z}{1 + |z|^2}\,\nu(\df z)\r|. 
\end{align*}

\begin{prop}\label{prop:1}
Suppose that $Q$ is a L\'evy process with characteristic \eqref{Q:levy}, and that there exists a constant 
\begin{align}
\b:= \inf\l\{\eta>0:\limsup_{x\to 0} x^\eta h(x) = 0\r\}. \label{beta}
\end{align}
Then the condition Q1[$\g$] holds true for any $\g \in (0,\g_0)$, where 
\[
\g_0=\l\{ \begin{array}{ll} \b^{-1}& (\s=0)\\ (\b\vee 2)^{-1}& (\s\ne 0)\end{array}\r.. 
\] 
We interpret that $1/0=\infty$. 
\end{prop}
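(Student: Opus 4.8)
The plan is to reduce the conditional probability in Q1[$\g$] to a single small-time maximal probability and then to control it by a Pruitt-type maximal inequality, whose right-hand side is governed precisely by the function $h$. First, since $Q$ is a L\'evy process it has stationary and independent increments, so conditioning on $\F_\t$ eliminates the dependence on the past, and using stationarity together with $\T-\t=\D_n$ one gets
\[
\P\l\{\sup_{t\in(\t,\T]}|Q_t-Q_\t|>\D_n^\g\,\Big|\,\F_\t\r\}=\P\l\{\sup_{t\in(0,\D_n]}|Q_t|>\D_n^\g\r\}.
\]
In particular the left-hand side is (a.s.\ equal to) a deterministic quantity, so it suffices to prove that this probability tends to $0$; the required $o_p(1)$ in Q1 then follows trivially.

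Next I would invoke the maximal inequality of Pruitt for L\'evy processes: there is a universal constant $C$ so that for all $t,r>0$,
\[
\P\l\{\sup_{s\le t}|Q_s|\ge r\r\}\le C\,t\,h(r),
\]
where $h$ is exactly the concentration function built from the generating triplet. The three summands of the $h$ defined before the statement are the tail mass $\int_{|z|>x}\nu(\df z)$, the truncated second moment $x^{-2}\int_{|z|\le x}|z|^2\,\nu(\df z)$, and $x^{-1}|b_x|$ with $b_x$ the drift at scale $x$; the apparently awkward third term is just the L\'evy--Khintchine drift $b$ re-centred from the truncation $z/(1+|z|^2)$ to the hard truncation $z\,\I_{\{|z|\le x\}}$, since $z\l(1-(1+|z|^2)^{-1}\r)=z|z|^2/(1+|z|^2)$. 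Verifying that the paper's $h$ coincides with Pruitt's concentration function is the one point that needs genuine care, and it is where I expect the main bookkeeping to lie.

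Granting the identification, applying the inequality with $t=\D_n$ and $r=\D_n^\g$ gives the bound $C\,\D_n\,h(\D_n^\g)$. To turn this into a rate I would use the definition \eqref{beta} of $\b$: for every $\eta>\b$ one has $\limsup_{x\to0}x^\eta h(x)=0$, hence $h(\D_n^\g)=o(\D_n^{-\g\eta})$ and therefore $\D_n\,h(\D_n^\g)=o(\D_n^{1-\g\eta})\to0$ whenever $\g\eta<1$. Since an exponent $\eta>\b$ with $\g\eta<1$ can be chosen exactly when $\g<\b^{-1}$, the jump part is negligible for every such $\g$, which gives $\g_0=\b^{-1}$ in the case $\s=0$.

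Finally, when $\s\ne0$ I would split $Q=\s W+J$ into its Gaussian and remaining parts and estimate $\sup_t|Q_t|\le\s\sup_t|W_t|+\sup_t|J_t|$ via a union bound. The part $J$ is treated exactly as above, while for the Brownian part the computation already recorded for the Wiener case before the statement gives $\P\l\{\s\sup_{t\le\D_n}|W_t|>\tfrac12\D_n^\g\r\}\to0$ for every $\g<1/2$ (equivalently, restoring the Gaussian term $\s^2 x^{-2}$ in $h$ contributes a factor of order $\D_n^{1-2\g}$). Combining the two constraints $\g<\b^{-1}$ and $\g<1/2$ gives $\g<\min(\b^{-1},1/2)=(\b\vee2)^{-1}$, which is the claimed $\g_0$.
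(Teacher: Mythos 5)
Your proposal is correct and follows essentially the same route as the paper: reduce the conditional probability to a deterministic small-time maximal probability via stationary independent increments, split $Q$ into its Gaussian and pure-jump parts, bound the jump part by Pruitt's maximal inequality $\P\{\sup_{s\le t}|Z_s|\ge r\}\lesssim t\,h(r)$, and conclude from the definition of $\b$ that $\D_n h(\D_n^\g)\to 0$ whenever $\g<\b^{-1}$, with the extra constraint $\g<1/2$ from the Brownian part when $\s\ne 0$. The only difference is presentational: you spell out the identification of $h$ with Pruitt's concentration function and the choice of $\eta\in(\b,1/\g)$, which the paper leaves implicit.
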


\begin{proof}
Let $Q_t=\s W_t + Z_t$, where $W$ is a Wiener process and $Z$ is a pure jump L\'evy process with characteristic 
\[
\log \E\l[\exp(i u^\top Z_1)\r]=ib^\top u + \int_{\R^d}\l(e^{iu^\top z} - 1- \frac{iu^\top z}{1+|z|^2}\r)\,\nu(\df z),\quad u\in \R^d.
\]
By the independent, stationary increments property for L\'evy process $Q$, we see that 
\begin{align*}
R_n&:= \P\l\{\sup_{t\in (\t,\T]}|Q_t-Q_\t| > \D_n^\g\Big|\F_\t\r\} \\
&\lesssim \P\l\{\sup_{t\in (0,\D_n]}|\s W_t| > \D_n^\g/2\r\} + \P\l\{\sup_{t\in (0,\D_n]}|Z_t| > \D_n^\g/2\r\} =: R^{(1)}_n + R^{(2)}_n. 
\end{align*}
Note that $R^{(1)}_n\to 0$ for any $\g\in (0,1/2)$ if $\s\ne 0$, and that $R^{(1)}_n\equiv 0$ if $\s=0$. 
Moreover, according to Pruitt \cite{p81}, (3.2), it follows that 
\[
R^{(2)}_t \lesssim \D_n h(\D_n^\g/2)= (2x_n)^{1/\g} h(x_n), \quad x_n:=\D_n^\g/2. 
\]
Hence, if $1/\g > \b$, equivalently $\g\in (0,\b^{-1})$, then $R^{(2)}_n\to 0$ by the definition of $\b\ge 0$. Then the result follows. 
\end{proof}

\begin{cor}
Suppose that $\int_{|z|\le 1}|z|\nu(\df z) < \infty$, and that the L\'evy characteristic $\psi$ is given by 
\[
\psi(u) =  \frac{\s^2}{2}u^\top u + \int_{\R^d}(e^{iu^\top z} - 1)\,\nu(\df z). 
\]
Note that $b- \int_{|z|> x} \frac{z}{1 + |z|^2}\,\nu(\df z) = 0$ in the expression \eqref{Q:levy}. 
Then $\b$ in Proposition \ref{prop:1}, \eqref{beta} is consistent with the Blumenthal-Getoor index: 
\[
\b=\inf\l\{\eta>0:\int_{|z|\le 1} |z|^\eta\,\nu(\df z) < \infty\r\} \le 1. 
\]
\end{cor}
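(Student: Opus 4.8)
The plan is to determine, for the specific characteristic $\psi$ in the statement, which drift vector $b$ enters the definition of $h$, and then to bound the three terms of $h$ by truncated moments $\int_{|z|\le x}|z|^\eta\,\nu(\df z)$, whose finiteness is exactly what the Blumenthal--Getoor index records. Put $\b_0:=\inf\{\eta>0:\int_{|z|\le1}|z|^\eta\,\nu(\df z)<\infty\}$; the claim is $\b=\b_0$ and $\b_0\le1$. First I would recast the given exponent in the canonical form \eqref{Q:levy}. Since $\int_{|z|\le1}|z|\,\nu(\df z)<\infty$, while $|z|/(1+|z|^2)\le1$ and $\nu(\{|z|>1\})<\infty$, the integral $\int_{\R^d}\frac{z}{1+|z|^2}\,\nu(\df z)$ converges absolutely, and $\int(e^{iu^\top z}-1)\,\nu(\df z)=\int\big(e^{iu^\top z}-1-\frac{iu^\top z}{1+|z|^2}\big)\,\nu(\df z)+i\big(\int\frac{z}{1+|z|^2}\,\nu(\df z)\big)^\top u$. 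Hence the drift in $h$ is $b=\int_{\R^d}\frac{z}{1+|z|^2}\,\nu(\df z)$, and with this value the bracket in the third term of $h$ telescopes to $\int_{|z|\le x}z\,\nu(\df z)$. Writing $N(x):=\nu(\{|z|>x\})$ and $U_j(x):=\int_{|z|\le x}|z|^j\,\nu(\df z)$, this yields $h(x)\le N(x)+x^{-2}U_2(x)+x^{-1}U_1(x)$, while $h(x)\ge N(x)$ because all three terms are nonnegative.

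For the bound $\b\le\b_0$ I would fix $\eta>\b_0$, choose an auxiliary $\eta'\in(\b_0,\min(\eta,2))$ so that $C_{\eta'}:=\int_{|z|\le1}|z|^{\eta'}\,\nu(\df z)<\infty$, and estimate termwise. Splitting the tail at $|z|=1$ gives $N(x)\lesssim x^{-\eta'}$; on $\{|z|\le x\}$ the inequality $|z|^{j-\eta'}\le x^{j-\eta'}$ (valid for $\eta'<2$, taking $\eta'\le1$ when $\eta\le1$) gives $x^{-2}U_2(x)\lesssim x^{-\eta'}$ and $x^{-1}U_1(x)\lesssim x^{-\eta'}$, the latter replaced by the crude bound $U_1(x)\le U_1(1)<\infty$ when $\eta>1$. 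Multiplying by $x^\eta$ gives $x^\eta h(x)\lesssim x^{\eta-\eta'}\to0$, so every $\eta>\b_0$ lies in the set defining $\b$, whence $\b\le\b_0$.

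For the reverse bound I would use the layer--cake identity $\int_{|z|\le1}|z|^\eta\,\nu(\df z)=\eta\int_0^1 s^{\eta-1}\ol N(s)\,\df s$ with $\ol N(s):=\nu(\{s<|z|\le1\})$, obtained by Fubini. If $\limsup_{x\to0}x^\eta N(x)=0$ then $N(s)\lesssim s^{-\eta}$ on $(0,1]$, so the identity forces $\int_{|z|\le1}|z|^{\eta'}\,\nu(\df z)<\infty$ for all $\eta'>\eta$, i.e. $\b_0\le\eta$. Contrapositively, for $\eta<\b_0$ one has $\limsup_{x\to0}x^\eta N(x)>0$, and since $h(x)\ge N(x)$ this forces $\limsup_{x\to0}x^\eta h(x)>0$; thus no $\eta<\b_0$ belongs to the set defining $\b$, giving $\b\ge\b_0$. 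Combined with the previous paragraph, $\b=\b_0$, and $\b_0\le1$ is immediate since $\eta=1$ already makes $\int_{|z|\le1}|z|\,\nu(\df z)$ finite by hypothesis. The step I expect to be most delicate is the bookkeeping of the auxiliary exponent in the drift term near $\eta=1$: since $\b_0\le1$ there is little room between $\b_0$ and $1$, so the estimate of $x^{-1}U_1(x)$ must switch between the Hölder-type bound $U_1(x)\lesssim x^{1-\eta'}$ and the crude finite-variation bound, and one must verify that the layer--cake manipulations stay valid when the quantities involved are a priori infinite.
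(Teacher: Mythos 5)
Your proof is correct, and on the half of the argument where it matters most it takes a genuinely different---and sounder---route than the paper. For the direction $\beta\le\beta_0$ the paper proceeds as you do, bounding the terms of $h$ by truncated moments, but it plugs the index $\beta_0$ itself into the estimates (its bound runs through $\int_{|z|\le 1}|z|^{\beta_0}\,\nu(\df z)$), which is legitimate only when the Blumenthal--Getoor infimum is attained; it need not be (take $\nu(\df z)=|z|^{-3/2}\I_{\{|z|\le 1\}}\,\df z$ in $d=1$: then $\beta_0=1/2$ and $\int_{|z|\le 1}|z|^{1/2}\,\nu(\df z)=\infty$, while the corollary's hypothesis $\int_{|z|\le 1}|z|\,\nu(\df z)<\infty$ holds). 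Your interpolation through the auxiliary exponent $\eta'\in(\beta_0,\min(\eta,2))$ removes exactly this defect, at the price of the bookkeeping near $\eta=1$ that you flag. For the direction $\beta\ge\beta_0$ the two arguments diverge completely: the paper aims at the stronger claim $\liminf_{x\to 0}x^{\beta_0-\epsilon}h(x)=\infty$ via the term $x^{-2}\int_{|z|\le x}|z|^2\,\nu(\df z)$, but its displayed inequality is reversed (for $|z|\le x$ one has $x^{\beta_0-2}|z|^2\le |z|^{\beta_0}$, not $\ge$), and the liminf statement itself can fail: for a L\'evy measure carried by widely separated scales (atoms at $r_k=2^{-2^k}$ with mass $r_k^{-1/2}$, which satisfies all hypotheses with $\beta_0=1/2$), one checks that $x^{\beta_0-\epsilon}h(x)\to 0$ along points $x=r_k^{\alpha}$ chosen in the gaps, for suitable $\alpha\in(1,2)$. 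Your route---$h\ge N$ combined with the layer-cake identity, which converts $\limsup_{x\to0}x^{\eta}N(x)=0$ into $\beta_0\le\eta$---establishes precisely the $\limsup$ statement that the definition \eqref{beta} of $\beta$ actually requires, and it is what makes the corollary true in full generality. A final cosmetic point: after telescoping with the canonical drift $b=\int_{\R^d} z(1+|z|^2)^{-1}\,\nu(\df z)$, the third term of $h$ is $x^{-1}\bigl|\int_{|z|\le x}z\,\nu(\df z)\bigr|$ as you derive, whereas the paper's simplified expression for $h$ keeps the integrand $z|z|^2/(1+|z|^2)$; both are dominated by $x^{-1}\int_{|z|\le x}|z|\,\nu(\df z)$, so this discrepancy affects neither proof.
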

\begin{proof}
Note that, under the assumption, we have 
\[
h(x) = \int_{|z|>x}\nu(\df z) + x^{-2}\int_{|z|\le x} |z|^2\,\nu(\df z) + x^{-1}\l|\int_{|z|\le x} \frac{z|z|^2}{1 + |z|^2}\,\nu(\df z)\r|. 
\]
Let $\b_0$ be the Blumenthal-Getoor index: 
\[
\b_0=\inf\l\{\eta\in [0,2]:\int_{|z|\le 1} |z|^\eta\,\nu(\df z) < \infty\r\}. 
\]
Then we easily see by the direct computation that, for any $\e>0$
\[
\lim_{x\to 0}x^{\b_0+\e} h(x) =0,\quad \liminf_{x\to 0}x^{\b_0-\e} h(x) =\infty. 
\]
Indeed, for $x\in (0,1)$, we have 
\begin{align*}
|x^{\b_0 + \e}h(x)|&\le |x|^{\b_0 + \e}\l|\int_{x < |z| \le 1}\nu(\df z) + \int_{|z|>1}\nu(\df z)\r| \\
&\qquad + |x|^{\b_0 + \e-2}  \int_{|z|\le x} |z|^2\,\nu(\df z) + |x|^{\b_0 + \e-1}  \int_{|z|\le x} \frac{|z|^3}{1 + |z|^2}\,\nu(\df z) \\
&\le |x|^{\e}\int_{|z| \le 1}|z|^{\b_0}\nu(\df z) + \l|x^{\b_0 + \e}\int_{|z|>1}\nu(\df z)\r| + |x|^{\e}\int_{|z|\le x} |z|^{\b_0}\,\nu(\df z) \\
&= O(|x|^\e)\to 0. 
\end{align*}
The other condition: $\dis \liminf_{x\to 0}x^{\b_0-\e} h(x) =\infty$ is easier since 
\[
x^{\b_0-\e}h(x) > x^{\b_0 - \e -2}\int_{|z|\le x} |z|^2\,\nu(\df z) \ge x^{-\e}\int_{|z|\le x} |z|^{\b_0}\,\nu(\df z) \to \infty, 
\]
as $x\to 0$. Hence we get $\b_0=\b$. 

\end{proof}

The condition Q2 can be reduced to moment conditions for the L\'evy measure.  
\begin{prop}\label{prop:Q2-q}
Suppose that $Q$ is a L\'evy process with characteristic \eqref{Q:levy}. Then the condition Q2[$q$] holds true if
\[
\int_{|z|>1}|z|^{q}\,\nu(\df z) < \infty. 
\]
\end{prop}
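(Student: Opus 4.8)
The plan is to compute the Doob--Meyer components $A$ and $M$ explicitly via the L\'evy--It\^o decomposition and then to reduce condition Q2[$q$] to a classical moment characterization for L\'evy processes. Since $Q$ is assumed to admit the decomposition \eqref{doob}, it is a special semimartingale, which for a L\'evy process with characteristic \eqref{Q:levy} forces $\int_{|z|>1}|z|\,\nu(\df z)<\infty$ and yields the canonical form
\begin{align*}
Q_t = c\,t + \s W_t + \int_0^t\!\!\int_{\R^d} z\,\wt N(\df s,\df z),
\end{align*}
for a constant vector $c\in\R^d$, a Wiener process $W$, and the compensated jump measure $\wt N$. Thus $A_t=c\,t$ is deterministic, so $TV(A)=|c|$ is a constant and $\E[TV(A)^q]=|c|^q<\infty$ holds automatically; the entire content of Q2[$q$] therefore sits in the term $\E\l[[M,M]_1^{q/2}\r]$.

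First I would record that, because the compensator of the jumps is continuous, $\Delta M_s=\Delta Q_s$, and hence
\begin{align*}
[M,M]_1 = \s^2 + \sum_{0<s\le 1}|\Delta Q_s|^2 =: \s^2 + V_1,
\end{align*}
where $\s^2$ denotes the (constant) continuous quadratic-variation part and $V_t:=\sum_{0<s\le t}|\Delta Q_s|^2$. The key observation is that $V=(V_t)$ is itself a subordinator: it is an increasing pure-jump L\'evy process whose L\'evy measure $\rho$ is the image of $\nu$ under the map $z\mapsto|z|^2$. It is a genuine subordinator because $\int_{0<w\le 1}w\,\rho(\df w)=\int_{|z|\le 1}|z|^2\,\nu(\df z)<\infty$ by the standing hypothesis on $\nu$.

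The main step is then to apply the classical moment criterion for L\'evy processes (see, e.g., Sato's moment theorem for submultiplicative functions) to $V$: for $p>0$, $\E[V_1^p]<\infty$ if and only if $\int_{w>1}w^p\,\rho(\df w)<\infty$. Taking $p=q/2$ and pushing the integral back through $z\mapsto|z|^2$ gives $\int_{w>1}w^{q/2}\,\rho(\df w)=\int_{|z|>1}|z|^{q}\,\nu(\df z)$, so $\E[V_1^{q/2}]<\infty$ is equivalent to $\int_{|z|>1}|z|^q\,\nu(\df z)<\infty$. Since $\s^2$ is a constant and $x\mapsto(\s^2+x)^{q/2}$ is comparable to $x^{q/2}$ for large $x$, we get $\E\l[[M,M]_1^{q/2}\r]<\infty$ if and only if $\E[V_1^{q/2}]<\infty$; combining this with the triviality of the $TV(A)$ term closes the equivalence in both directions.

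The part I expect to require the most care is the bookkeeping around the decomposition rather than the analytic estimate. One must justify that \eqref{doob} really is the canonical special-semimartingale decomposition with deterministic $A$ --- this is exactly where $\int_{|z|>1}|z|\,\nu(\df z)<\infty$ enters, and it is harmless here since that bound is subsumed by the target condition whenever $q\ge 1$ and is in any case guaranteed by the standing assumption that \eqref{doob} holds. One should also verify that $V$ qualifies as a L\'evy process to which the cited moment theorem applies, and handle the comparison $(\s^2+x)^{q/2}\asymp x^{q/2}$ uniformly in $x$; both are routine once the subordinator structure of $V$ is in place.
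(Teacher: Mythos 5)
Your proof rests on reading \eqref{doob} as the \emph{canonical} decomposition of a special semimartingale, with $A$ predictable, so that for a L\'evy process $A_t=ct$ is deterministic and every jump is swept into the compensated martingale part. That reading conflicts with how the paper itself uses \eqref{doob}: in the paper's proof of this proposition the decomposition is
\begin{align*}
A_t = at + \int_0^t\int_{|z|>1} z\,N(\df s,\df z),\qquad M_t = bW_t + \int_0^t\int_{|z|\le 1} z\,\wt{N}(\df s,\df z),
\end{align*}
so $A$ carries the (non-predictable) large jumps, and \eqref{doob} is merely \emph{some} finite-variation plus local-martingale decomposition, which every L\'evy process admits. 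The discrepancy is not cosmetic. Under the intended reading your two claims ``$A$ is deterministic'' and ``$\E[TV(A)^q]$ is trivially finite'' fail, and the substantive half of the proof --- the bound $\E[TV(A)^q]\lesssim \int_{|z|>1}|z|^q\,\nu(\df z)$ for the large-jump part, which the paper obtains from Bichteler--Jacod/Burkholder--Davis--Gundy estimates for compensated Poisson integrals --- is absent from your argument. Moreover your premise is genuinely false in cases the proposition is meant to cover: for Cauchy-type tails $\nu(\df z)\asymp |z|^{-2}\,\df z$ and $q=1/2$ one has $\int_{|z|>1}|z|^{1/2}\,\nu(\df z)<\infty$ and the paper's decomposition satisfies Q2[$1/2$], yet $\int_{|z|>1}|z|\,\nu(\df z)=\infty$, so $Q$ is not a special semimartingale and the canonical decomposition your proof manipulates does not exist. (Your remark that the first-moment condition is ``subsumed by the target condition whenever $q\ge 1$'' concedes exactly this: for $q<1$ it is not, and leaning on the ``standing assumption'' begs the question of what \eqref{doob} means.)

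That said, your treatment of the martingale part is clean and in one respect stronger than the paper's. Writing $[M,M]_1=\s^2+V_1$ with $V_t=\sum_{s\le t}|Q_s-Q_{s-}|^2$ a subordinator whose L\'evy measure is the image of $\nu$ under $z\mapsto|z|^2$, and invoking Sato's moment criterion for submultiplicative functions, gives the equivalence $\E[V_1^{q/2}]<\infty \Leftrightarrow \int_{|z|>1}|z|^q\,\nu(\df z)<\infty$ in \emph{both} directions; the paper's proof, by contrast, only verifies that the moment condition implies Q2[$q$] and never addresses the converse stated in the proposition. The repair is therefore straightforward: apply the same subordinator/Sato device to the paper's decomposition. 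The martingale part then only involves jumps bounded by $1$ and is handled as you do (or by the paper's H\"older argument), while the finite-variation part satisfies $TV(A)\le |a|+S_1$ and $TV(A)\ge S_1-|a|$ with $S_1=\sum_{s\le 1}|Q_s-Q_{s-}|\I_{\{|Q_s-Q_{s-}|>1\}}$ a compound-Poisson subordinator whose L\'evy measure is the image of $\nu|_{\{|z|>1\}}$ under $z\mapsto|z|$; Sato's criterion then gives $\E[S_1^q]<\infty\Leftrightarrow\int_{|z|>1}|z|^q\,\nu(\df z)<\infty$. This yields the full equivalence for every L\'evy process with characteristic \eqref{Q:levy}, with no special-semimartingale restriction and no Bichteler--Jacod estimates.
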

\begin{proof}
The L\'evy process $Q$ has the following L\'evy-Ito decomposition: 
\[
Q_t = a t + b W_t + \int_0^t \int_{|z|\le 1} z\,\wt{N}(\df s,\df z)  + \int_0^t \int_{|z|>1}z\,N(\df s,\df z)
\]
where $a$ and $b$ are some constants, $W$ is a Wiener process, $N$ is a Poisson random measure, and $\wt{N}(\df s,\df z) = N(\df s,\df z) - \nu(\df z) \df s$. 
Then the decomposition \eqref{doob} is given by 
\[
A_t =  a t + \int_0^t \int_{|z|>1}z\,N(\df s,\df z),\quad M_t=  b W_t + \int_0^t \int_{|z|\le 1} z\,\wt{N}(\df s,\df z). 
\]
and it follows from the Burkholder-Davis-Gundy inequality that 
\begin{align*}
\E[TV(A)^q] &\lesssim |a|^q + \E\l|\int_0^1 \int_{|z|>1}|z|\,N(\df s,\df z)\r|^q \\
&\lesssim 1 + \E\l|\int_0^1 \int_{|z|>1}|z|\,\wt{N}(\df s,\df z)\r|^q + \int_{|z|>1}|z|^q\,\nu(\df z). 
\end{align*}
According to the argument as in Bichteler and Jacod \cite{bj96}; see also the proofs of lemma 4.1 and Proposition 3.1 by Shimizu and Yoshida \cite{sy06}, 
we see that 
\[
\E\l|\int_0^1 \int_{|z|>1}|z|\,\wt{N}(\df s,\df z)\r|^q \lesssim \int_{|z|>1}|z|^q\,\nu(\df z). 
\]
Hence 
\[
\E[TV(A)^q] \lesssim \int_{|z|>1}|z|^q\,\nu(\df z) < \infty. 
\]
By the similar argument, it is easy to see by H\"older's inequality that, for any $m>1$, 
\[
\E|[M,M]|^{q/2} \le \l(\E|[M,M]|^{mq/2}\r)^{1/m}\lesssim 1 +  \l(\int_{|z|\le 1}|z|^{mq/2}\,\nu(\df z)\r)^{1/m} < \infty,  
\]
if we take $mq/2\ge 2$. This completes the proof. 
\end{proof}

\section{Numerical study} \label{sec:simulation}

\subsection{2-dim model}\label{sec:2-dim}
We consider the following 2-dimensional L\'evy driven SDE: 
\begin{align}
b(x,\th) = \l(\sqrt{\th_1 + x_1^2 + x_2^2}, -\frac{\th_2x_2}{\sqrt{1 + x_1^2 + x_2^2}}\r)^\top,\quad Q_t=\begin{pmatrix}V_t^{\k,\xi}+B_t\\ S_t^\alpha\end{pmatrix}, \label{sim:ex1}
\end{align}
where $B$ is a standard Brownian motion, $S^\a$ is a standard symmetric $\alpha$-stable process $S_\alpha(1,0,0)$, and
$V^{\k,\xi}$ is a variance gamma process with L\'evy density
\begin{align*}
p_V(z)=\frac{\k}{|z|}e^{-\xi|z|},\quad z\in \R,\ \k,\xi>0, 
\end{align*}
which is obtained by Brownian subordination with a gamma process $G_t\sim \Gamma(\mathrm{shape}=ct,\mathrm{scale}=1/\lambda)\ (c,\lambda>0)$ as follows:
\[V_t^{\k,\xi}=\sigma W_{G_t}\ (\sigma>0),\quad \k=\frac{\lambda^2}{c},\ \xi=\frac{\sqrt{2\k}}{\sigma} \]
where $W$ is the standard Brownian motion independent of $G$; see, e.g., Cont and Tankov \cite{ct04} for details.
Assume that $W,S^\a$ and $V^{\k,\xi}$ are independent of each other.

In the sequel, we set values of parameters as
\[
(X_0^{(1)},X_0^{(2)})=(1,1),\quad (\th_1,\th_2)=(2,1),\quad (\k,\xi,\alpha)=(5,3,3/2).
\]
A sample path is given in Figure \ref{fig:path}. 
Then both $X^{(1)}$ and $X^{(2)}$ are unbounded variation jump-processes with finite activity of jumps for $X^{(1)}$ and infinite for $X^{(2)}$. 
We will compare our threshold-type estimator to the LSE by Long {\it et al.} \cite{lss13}. 

\subsection{LSE and threshold estimator}

Note that the LSE $\wh{\th}^{LSE}=(\wh{\th}_{1,n,\e}^{LSE},\wh{\th}_{2,n,\e}^{LSE})$ is a solution to 
\begin{align*}
\sum_{k=1}^n \frac{\DX^{(1)}}{\sqrt{\wh{\th}_{1,n,\e}^{LSE} + (X_{\t}^{(1)})^2 + (X_{\t}^{(2)})^2}} = 1;\quad
\wh{\th}_{2,n,\e}^{LSE}=-\frac{\sum_{k=1}^{n}\frac{\l(\DX^{(2)}\r) X_{\t}^{(2)}}{\sqrt{1+(X_{\t}^{(1)})^2+(X_{\t}^{(2)})^2}}}{n^{-1}\sum_{k=1}^{n}\frac{(X_{\t}^{(2)})^{2}}{1+(X_{\t}^{(1)})^2+(X_{\t}^{(2)})^2}}.  \label{eq:theta1}
\end{align*}
and that our estimator $\wh{\th}=(\wh{\th}_{1,n,\e},\wh{\th}_{2,n,\e})$ is a solution to 
\begin{align*}
\sum_{k=1}^n \frac{\DX^{(1)}}{\sqrt{\wh{\th}_{1,n,\varepsilon} + (X_{\t}^{(1)})^2 + (X_{\t}^{(2)})^2}} \I_{\{|\DX|\le \d_{n,\e}\}}= \frac{1}{n}\sum_{k=1}^n \I_{\{|\DX|\le \d_{n,\e}\}}
\end{align*}
\begin{align*}
\wh{\th}_{2,n,\e}=-\frac{\sum_{k=1}^{n}\frac{\l(\DX^{(2)}\r)X_{\t}^{(2)}}{\sqrt{1+(X_{\t}^{(1)})^2+(X_{\t}^{(2)})^2}}\I_{\{|\DX|\le \d_{n,\e}\}}}{n^{-1}\sum_{k=1}^{n}\frac{(X_{\t}^{(2)})^{2}}{1+(X_{\t}^{(1)})^2+(X_{\t}^{(2)})^2}\I_{\{|\DX|\le \d_{n,\e}\}}}.  
\end{align*}

In the simulations, we tried $n=1000, 3000, 5000$ and $\e=(0.4,0.3,0.05)$. 
The results for the LSE given in \eqref{lse1} are in Table \ref{tab:lse}, 
and those for the threshold-type estimator in \eqref{tte1} are in \ref{tab:tte1} and \ref{tab:tte2}, 
in which the threshold $\d_{n,\e}=\e/5$ and $\e/10$ are used, respectively. 
The simulations are iterated 10000 times, and mean and standard deviation (s.d.) of estimators are given in those tables. 
In Table \ref{tab:tte1} and \ref{tab:tte2}, the values of $n\d_{n,\e}$ are also included. 

\subsection{Discussion}
From numerical results, we can observe that the threshold-type estimator improves the accuracy of estimation in the sense of 
the standard deviation compared with the usual LSE by Long {\it et al.} {\cite{lss13}. Especially, the improvement for $\th_2$ is more drastic than that for $\th_1$. 
This would be because $\th_2$ is the mean-reverting parameter for $X^{(2)}$, a {\it stable process} which has much more frequent and larger jumps than those of $X^{(1)}$. 
In order to make the estimation of $\th_1$ more accurate, we need to make $\e$ smaller rather than $n$ larger. 

We should note that the case where $(n,\e,\d_{n,\e})=(1000,0.05,\e/10)$ causes a large bias, the reason of which would be that 
the asymptotic theory does not work well because $n\d_{n,\e}=5$ is `small' although it should be large enough in the theory. 
From many simulations omitted here, we see that, at least, ``$n\d_{n,\e}\ge 10$"  would be needed for estimation with `small' bias; see also, e.g., the case where $(n,\e,\d_{n,\e})=(1000,0.05,\e/5)$, which returns better estimation. 

Although there remains a problem to choose $\d_{n,\e}$ in practice, we can use the method proposed by Shimizu \cite{s10} if the parameters in noise process is known or estimable.

Finally, we observe normal QQ-plots for normalized estimators $\e^{-1}(\wh{\th}_{1,n,\e} - \th_1)$ and $\e^{-1}(\wh{\th}_{2,n,\e} - \th_2)$ in the case where 
$n=5000$, $\e=0.1$ and $\d=\e/5$; see Figures \ref{fig:qq1} and \ref{fig:qq2}. 
According to the results, the estimators with $\d=\e/5$ seem asymptotically normal although Theorem \ref{thm:asym-dist} does not necessarily say that. 
For your reference, see Figures \ref{fig:qq0-1} and \ref{fig:qq0-2} that are the normal QQ-plots for the LSE without filter proposed by Long {\it et al.} \cite{lss13}. 
The figures show that the usual LSE's are not necessarily asymptotically normal as the theory saying. Therefore, the asymptotic normality-like phenomena would be due to the filter effects. 

We could understand those results intuitively as follows: cutting large jumps from a process with infinite activity jumps, the remaining small jumps will behave as a Brownian motion. 
For example, suppose that the driving noise $Q$ is a L\'evy process of infinite activity jumps with the L\'evy measure $\nu$, and put 
\[
Q^{(\d)}_t:=\int_0^t\int_{|z|\le \d} z\,\wt{N}(\df s,\df z), \quad \d>0, 
\]
where $\wt{N}$ is a compensated Poisson random measure as given in the proof of Proposition \ref{prop:Q2-q}. 
Then, according to Asmussen and Rosinski \cite{ar01}, it follows for $\s^2(\d):=\int_{|z|\le \d} |z|^2\,\nu(\df z)$ that  
\begin{align*}
\s(\d)^{-1}Q^{(\d)} \toD B, 
\end{align*}
in $\mathbb{D}[0,1]$-space equipped with the sup-norm under a certain assumption, where $B$ is a standard Brownian motion. 
Therefore, by an appropriate norming of estimator, the limit in Theorem \ref{thm:asym-dist} might be an integral with respect to the process that is close to a Brownian motion. 
This consideration indicates that the LSE with filter can be ``approximately" asymptotically normal by letting $\d_{n,\e}$ converge to zero in a suitable rate, which is a great advantage 
when we would like to make confidence intervals or do statistical testing. 

In the next subsection, we shall try to understand this phenomenon theoretically.

\section{Could a filtered LSE be asymptotically normal?} \label{sec:normal?}

In this section, we assume that $Q^\e\equiv Q$ is a L\'evy process with L\'evy measure $\nu$. 
We consider the following two cases for $Q$ with characteristic \eqref{Q:levy}: 
\begin{itemize}
\item Finite activity case: $\dis \int_{|z|\le 1} \nu(\df z) < \infty$ and $\s^2>0$; 
\item Infinite activity case: $\dis \int_{|z|\le 1} \nu(\df z) = \infty$\ (possibly, $\s^2=0$).  
\end{itemize}
The former case, it would be possible to show the asymptotic normality of the filtered LSE by separating the increments $\DX$'s with or without jumps as in Shimizu and Yoshida \cite{sy06}. 
However, when  $\int_{|z|\le 1} \nu(\df z) = \infty$, it is known that the filter $\filter$ is not enough to separate $\DX$'s with or without jumps; 
see Shimizu \cite{s06}, Lemma 3.3 and some remarks on that. 
In this section, we shall consider the following {\it ad hoc} situation to understand why the filtered LSE looks like asymptotically normal.  
\begin{flushleft}
{\bf [Assumption]} {\it All the jumps of $Q$ are observed.} 
\end{flushleft}
Under this assumption, the following contrast function does make sense. 
\begin{align*}
\wt{\Phi}_{n,\e,\d}(\th)=\e^{-2}\D_n^{-1}\sum_{k=1}^n |\DX - b(X_\t,\th)\cdot \D_n|^2\I_{\l\{\|\D Q^\e\|^*_k \le \d\r\}},  
\end{align*}
where $\d >0$ is a constant, and 
\[
\|Q^\e\|^*_k = \sup_{t\in (t_k,t_{k+1}]}|\D Q_t^\e|,\quad \D Q_t^\e = \e \cdot (Q_t- Q_{t-}). 
\]
Under our assumption, we can specify if $\I_{\l\{\|\D Q^\e\|^*_k \le \d\r\}}=1$ or $0$, and we can define the estimator of $\th$ as the minimum contrast estimator: 
\[
\wt{\th}_{n,\e,\d} = \arg\min_{\th\in \Theta} \wt{\Phi}_{n,\e,\d}(\th). 
\]
Hereafter, we further use the following notation: 
\begin{itemize}
\item For a $\k>0$, 
\[
\s^2(\k):=\int_{|z|\le \k} |z|^2\,\nu(\df z);\quad \la(\k):=\int_{|z|>\k}\nu(\df z),\]
where $\nu$ is a L\'evy measure of $Q$. 

\item All the asymptotic symbols are used under $\d,\e\to 0$ and $n\to \infty$. 
\end{itemize}

\subsection{Finite activity case}

Suppose that $\dis \int_{|z|\le 1} \nu(\df z) < \infty$ and $\s^2>0$, which implies that $Q$ is written as 
\begin{align}
Q_t = \s W_t + \sum_{i=1}^{N_t} Y_i, \label{FA-Q}
\end{align}
where $N$ is a Poisson process with intensity $\la:=\int_{\R} \nu(\df z)$ and $\{Y_i\}_{i=1,2,\dots}$ is an i.i.d. sequence with distribution $\la^{-1}\nu$. 
In this special case, we have the following result by taking $\d_{n,\e}\downarrow 0$ faster than the speed of $\e$ that is a ``magnitude of jumps". 

\begin{thm}\label{thm:finite-activity}
Suppose that $Q$ is given by \eqref{FA-Q}, and that A1--A4 hold true. 
Moreover, suppose that 
\begin{align}
\d/\e\to 0.\label{d'-1}
\end{align}
Then 
\[
\e^{-1}(\wt{\th}_{n,\e,\d} - \th_0) \toP I^{-1}(\th_0)\int_0^1 \n_\th b(X_t^0,\th_0)[\df W_t],  
\]
Hence $\wt{\th}_{n,\e,\d}$ is asymptotically normal. 
\end{thm}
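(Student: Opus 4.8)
The plan is to exploit the \emph{ad hoc} assumption that all jumps of $Q$ are observed, together with the finite activity in \eqref{FA-Q} and the rate \eqref{d'-1}, to show that with probability tending to one the filter $\qfilter$ retains \emph{exactly} the increments over jump-free intervals. First I would set $\J:=\{k:Q\text{ jumps in }(\t,\T]\}$. Since $N_1$ is Poisson$(\la)$, the cardinality $|\J|$ is almost surely finite, and for $k\notin\J$ we have $\|\D Q^\e\|_k^*=0\le\d$, so these intervals are always kept. An interval $k\in\J$ survives only when its largest jump obeys $\e|Y_i|\le\d$, i.e. $|Y_i|\le\d/\e$; since the expected number of jumps in $[0,1]$ of modulus at most $\d/\e$ equals $\nu(\{|z|\le\d/\e\})\to 0$ (using $\nu(\{0\})=0$ and finite activity), Markov's inequality and \eqref{d'-1} give $\P(\exists\,k\in\J:\qfilter=1)\to 0$. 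Hence on an event $\Omega_n$ with $\P(\Omega_n)\to 1$ we have $\qfilter=\I_{\{k\notin\J\}}$, so that the jump-carrying increments are removed and only genuinely diffusive increments remain.

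On $\Omega_n$ the contrast collapses to $\wt\Phi_{n,\e,\d}(\th)=\e^{-2}\D_n^{-1}\sum_{k\notin\J}|\chi_k(\th)|^2$, and over each retained interval $X^\e$ solves $\df X_t^\e=b(X_t^\e,\th_0)\,\df t+\e\s\,\df W_t$, so that $\chi_k(\th_0)=\int_\t^\T[b(X_s^\e,\th_0)-b_{k-1}(\th_0)]\,\df s+\e\s\,\D_kW$ with $\D_kW:=W_\T-W_\t$. From here I would run the estimating-equation argument of Theorems \ref{thm:consist}--\ref{thm:asym-dist} with $\s W$ in place of $Q$; the only structural difference is that the levels $X_\t^\e$ carry past jumps, but these are of size $\e|Y_i|\to 0$, so the uniform convergence $X^\e\to X^0$ still holds and the deterministic limits are unchanged. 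For consistency I expand $\e^2(\wt\Phi_{n,\e,\d}(\th)-\wt\Phi_{n,\e,\d}(\th_0))$ exactly as in Theorem \ref{thm:consist}: the leading term converges uniformly to $\int_0^1|b(X_t^0,\th)-b(X_t^0,\th_0)|^2\,\df t$, the cross term is $O_p(\e)$, and the omitted $|\J|$ intervals contribute $o_p(1)$. By A3 the limit is uniquely minimal at $\th_0$, giving $\wt\th_{n,\e,\d}\toP\th_0$.

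For the limit law I write $\e^{-1}(\wt\th_{n,\e,\d}-\th_0)=-H_{n,\e}^{-1}S_{n,\e}$ from a Taylor expansion of $\n_\th\wt\Phi_{n,\e,\d}$ at $\th_0$, with $S_{n,\e}=\e\,\n_\th\wt\Phi_{n,\e,\d}(\th_0)$ and $H_{n,\e}=\e^2\n_\th^2\wt\Phi_{n,\e,\d}(\bar\th)$ for $\bar\th$ on the segment $(\th_0,\wt\th_{n,\e,\d})$. On $\Omega_n$ the dominant part of the score is $-2\s\sum_{k\notin\J}\n_\th b_{k-1}(\th_0)[\D_kW]$; the drift contribution $\e^{-1}\int_\t^\T[b(X_s^\e,\th_0)-b_{k-1}(\th_0)]\,\df s$ is handled by the same small-noise estimates as in Theorem \ref{thm:asym-dist} (cf. the rate $n\e\to\infty$ discussed in Remark \ref{rem:ne}), and restoring the $|\J|$ missing indices costs $O_p(|\J|\sqrt{\D_n})=o_p(1)$. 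Since $X^\e\to X^0$ uniformly, the resulting It\^o sum converges to $-2\s\int_0^1\n_\th b(X_t^0,\th_0)[\df W_t]$, while $H_{n,\e}\toP 2I(\th_0)$ because its leading term is $2\D_n\sum_k\n_\th b_{k-1}(\bar\th)^\top\n_\th b_{k-1}(\bar\th)$ and the second-order term is $O_p(\e)+O_p(\D_n)$. With A4 guaranteeing invertibility, these combine to give $\e^{-1}(\wt\th_{n,\e,\d}-\th_0)\toP I^{-1}(\th_0)\int_0^1\n_\th b(X_t^0,\th_0)[\s\,\df W_t]$, which is the Gaussian ($\s W$) part of the limit in Theorem \ref{thm:asym-dist}, hence normal.

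The main obstacle is the jump-separation step: rigorously showing that $\qfilter$ coincides with the jump-free indicator with probability tending to one, and then controlling the \emph{random but finite} set $\J$ so that discarding those intervals is negligible after the $\e^{-1}$ normalisation. Once this clean separation is secured, the rest is the by-now routine small-diffusion LSE analysis already carried out for Theorems \ref{thm:consist}--\ref{thm:asym-dist}, now applied to the Wiener component $\s W$.
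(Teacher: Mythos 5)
Your proposal is correct, and it reaches the conclusion by a genuinely different route from the paper's proof. You first establish a \emph{global separation event}: with probability tending to one, the filter $\qfilter$ coincides exactly with the jump-free indicator $\I_{\{k\notin\J\}}$, since jump-free intervals are always retained, while a retained jump would force $|Y_i|\le\d/\e$, an event whose probability is bounded by $\nu(\{0<|z|\le\d/\e\})\to 0$ (finite activity and $\nu(\{0\})=0$ are essential here). On that event the contrast is literally a small-diffusion LSE over the jump-free intervals, and restoring the $|\J|=O_p(1)$ discarded intervals costs only $O_p(|\J|\sqrt{\D_n})=o_p(1)$ in the score. The paper never proves this exact coincidence; it argues term by term: the per-interval negligibility $\P\l(\|\D Q^\e\|^*_k>\d\r)=1-e^{-\la(\d/\e)\D_n}\to 0$ feeds filtered laws of large numbers (Lemma \ref{lem3.5-6}), and the score is decomposed into a drift term, a retained-small-jump term $I^{(2)}_{n,\e,\d}$ killed by an expectation bound of order $\la\,\d/\e\to 0$, and a filtered Wiener term $I^{(3)}_{n,\e,\d}$ from which the filter is removed using its independence from $\D_k^n W$ and $X_\t$. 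Your route is cleaner and makes the cutting mechanism transparent, but it is intrinsically tied to finite activity ($|\J|<\infty$ a.s.); the paper's expectation-bound machinery is what carries over to Theorem \ref{thm:infinite-activity}, where exact separation is impossible because retained intervals still contain infinitely many small jumps. Two further remarks. First, your limit carries the factor $\s$, i.e.\ $I^{-1}(\th_0)\int_0^1\n_\th b(X_t^0,\th_0)[\s\,\df W_t]$; given \eqref{FA-Q} and Theorem \ref{thm:asym-dist} this is in fact the correct scaling, and the statement (and proof) in the paper silently drop $\s$, in effect taking $\s=1$. Second, both you (via Remark \ref{rem:ne}) and the paper (which cites \eqref{ne}, an assumption of Theorem \ref{thm:infinite-activity}) invoke a rate of the type $n\e\to\infty$ to control the drift error $\e^{-1}\sum_{k}\int_\t^\T\{b(X_s,\th_0)-b_{k-1}(\th_0)\}\,\df s$, a condition not listed among the hypotheses of Theorem \ref{thm:finite-activity}; this gap is shared with, not created by, your argument.
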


\subsection{Infinite activity case} 

\begin{thm}\label{thm:infinite-activity} 
Let $Q^\e \equiv Q$ be a L\'evy process with $\int_{|z|<1}\nu(\df z) = \infty$, and suppose A1--A4, Q1[$\g$], and that Q2[$q$] holds true for any $q>0$. 
Moreover suppose that  
\begin{align}
\frac{\la(\d/\e)}{n\log n} \to c \in (0,1),\quad  n\e\D_n^\g\to 0, \label{d'-2}
\end{align}
and that there exists a constant $\rho\in (0,1)$ such that 
\begin{align}
\s^{\rho}(\d/\e) \log n &\to \infty; \label{with-rho} \\
n\e \cdot \s(\d/\e)&\to \infty.  \label{ne} 
\end{align}
Furthermore, suppose for each $\k>0$ that  
\begin{align}
\s\l(\k \s(\d/\e) \wedge \d/\e\r) &\sim \s(\d/\e), \label{sigma(delta)-1}
\end{align}
Then there exists a $d$-dimensional Brownian motion $B$, independent of $X_0=x$, such that 
the following weak convergence holds true:  
\[
\l(\s(\d/\e)\e\r)^{-1}\l(\wt{\th}_{n,\e,\d} - \th_0\r)\toD I^{-1}(\th_0)\int_0^1 \n_\th b(X_t^0,\th_0)\l[\df B_t\r],
\]
Hence $\wt{\th}_{n,\e,\d}$ is asymptotically normal. 
\end{thm}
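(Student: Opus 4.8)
The plan is to adapt the minimum-contrast scheme already used for Theorems~\ref{thm:consist}--\ref{thm:asym-dist}, tracking how the filter $\qfilter$ reshapes the gradient and Hessian of $\wt{\Phi}_{n,\e,\d}$ and replacing the (possibly non-Gaussian) stable-type limit $\z$ by a Gaussian one manufactured from the small jumps. Writing $a_{n,\e,\d}:=\s(\d/\e)\e$ for the normalizing scale, I would first differentiate the contrast,
\[
\n_\th\wt{\Phi}_{n,\e,\d}(\th_0)=-2\e^{-2}\sum_{k=1}^n \chi_k(\th_0)^\top\n_\th b_{k-1}(\th_0)\,\qfilter,
\]
whose Hessian has leading part $2\e^{-2}\D_n\sum_{k=1}^n \n_\th b_{k-1}(\th_0)^\top\n_\th b_{k-1}(\th_0)\,\qfilter$. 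By the standard argmin/Taylor argument used for Theorem~\ref{thm:asym-dist}, it then suffices to prove two things: a central limit theorem for the score normalized by $a_{n,\e,\d}$, and convergence of the correspondingly normalized Hessian to a deterministic positive-definite multiple of $I(\th_0)$; consistency of $\wt{\th}_{n,\e,\d}$ is obtained separately by mimicking Theorem~\ref{thm:consist} with $\qfilter$ in place of $\filter$.

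The heart of the matter is the filtered increments. I would split $Q$ at jump-level $\d/\e$, writing the increment $\D Q_k=\D Q_k^{\flat}+\D Q_k^{\sharp}$, where $\D Q_k^{\flat}$ carries the Gaussian and drift parts together with jumps of size $\le\d/\e$, while $\D Q_k^{\sharp}$ carries the jumps of size $>\d/\e$. Since $Q$ has independent increments and $\qfilter$ records exactly the absence in $(\t,\T]$ of a jump of $Q$ exceeding $\d/\e$, the filter annihilates $\D Q_k^{\sharp}$; hence on a retained interval $\chi_k(\th_0)=\e\,\D Q_k^{\flat}+r_k$, with $r_k$ the drift-discretization error $\int_\t^\T\big(b(X_s^\e,\th_0)-b_{k-1}(\th_0)\big)\,\df s$ plus a compensator bias. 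Because $\n_\th b_{k-1}(\th_0)$ is $\F_\t$-measurable while $(\D Q_k^{\flat},\qfilter)$ is independent of $\F_\t$, the centered, normalized summands of the score form a martingale-difference array in $k$, to which I would apply a martingale central limit theorem.

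The crux is to show this array produces a Gaussian limit, that is, to realize at the level of the discretized filtered score the Asmussen--Rosinski mechanism quoted in Section~\ref{sec:simulation}. Two ingredients are needed. First, the predictable quadratic variation: using $\E[(\D Q_k^{\flat})^{\otimes 2}\qfilter\,|\,\F_\t]\approx\s^2(\d/\e)\D_n\,I$ on retained intervals, a Riemann-sum law-of-large-numbers computation should give convergence of the normalized bracket to $\int_0^1\n_\th b(X_t^0,\th_0)^\top\n_\th b(X_t^0,\th_0)\,\df t=I(\th_0)$, which simultaneously identifies the limiting covariance and the Hessian limit. Second, and this is the main obstacle, the conditional Lindeberg condition must be verified: one must show that after normalization by $\s(\d/\e)$ no single small-jump increment carries macroscopic mass, so that the limit is Brownian rather than a jump process. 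This is exactly where condition \eqref{sigma(delta)-1} enters — it is precisely the Asmussen--Rosinski slowly-varying relation guaranteeing that the jumps lying between $\k\s(\d/\e)$ and $\d/\e$ contribute negligibly to the variance, which forces the Lindeberg truncation to vanish; condition \eqref{with-rho} then quantifies how slowly $\s(\d/\e)$ may tend to zero, so that this negligibility holds uniformly across the $n$ intervals.

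Finally I would dispose of the lower-order terms and assemble the result. The errors collected in $r_k$, together with the recentering $\E[\D Q_k^{\flat}\qfilter]$, must be shown negligible relative to $a_{n,\e,\d}$; here Q1[$\g$], the total-variation control of Q2[$q$], and the rate conditions $n\e\D_n^\g\to0$ and $n\e\,\s(\d/\e)\to\infty$ in \eqref{d'-2} and \eqref{ne} are exactly what make the deterministic part of $\chi_k$ and the bias asymptotically irrelevant, while the calibration $\la(\d/\e)/(n\log n)\to c$ governs the proportion of surviving intervals that enters the common normalization of score and Hessian. Combining the score CLT, the Hessian law of large numbers, and consistency through the continuous-mapping/argmin argument, I would conclude
\[
a_{n,\e,\d}^{-1}\big(\wt{\th}_{n,\e,\d}-\th_0\big)\toD I^{-1}(\th_0)\int_0^1\n_\th b(X_t^0,\th_0)\l[\df B_t\r],
\]
with $B$ the Brownian motion arising as the Gaussian limit of the normalized small-jump martingale. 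The delicate point throughout is that everything is a triangular array depending on $(n,\e,\d)$ at once, so each limit must be established jointly rather than iteratively.
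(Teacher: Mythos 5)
Your plan founders on a point you mention only in passing and never actually incorporate: under the calibration \eqref{d'-2}, $\la(\d/\e)\D_n\sim c\log n\to\infty$, so the probability that an interval survives the filter is $\P\l(\|\D Q^\e\|^*_k\le\d\r)=e^{-\la(\d/\e)\D_n}=:\eta_n\approx n^{-c}\to 0$. The filter rejects asymptotically \emph{every} increment, retaining only about $n^{1-c}$ of the $n$ intervals, and this vanishing factor breaks each of your three main steps. Since the large jumps driving $\qfilter$ are independent of $\D Q_k^{\flat}$ and of $\F_\t$, the correct moment is $\E\l[(\D Q_k^{\flat})^{\otimes 2}\qfilter\,\big|\,\F_\t\r]=\E\l[(\D Q_k^{\flat})^{\otimes 2}\r]\eta_n\approx\l(\s^2+\s^2(\d/\e)\r)\D_n\,\eta_n$, not $\s^2(\d/\e)\D_n$ as in your bracket computation; and even if you read your approximation as conditional on retention, the score sum then runs over only $\approx n\eta_n$ surviving terms, so the same factor reappears. (Note also that the Gaussian variance $\s^2\D_n$ inside $\D Q_k^{\flat}$ dominates $\s^2(\d/\e)\D_n$ whenever $\s^2>0$, a second problem with the same approximation.) Hence your normalized predictable bracket is $O_p\l(\eta_n+\s^2\eta_n/\s^2(\d/\e)\r)\toP 0$, so your martingale CLT can only produce a degenerate limit at the scale $(\s(\d/\e)\e)^{-1}$; your Hessian satisfies $n^{-1}\sum_{k=1}^n\n_\th b_{k-1}(\th_0)^\top\n_\th b_{k-1}(\th_0)\qfilter\approx\eta_n I(\th_0)\toP 0$, which is not a positive-definite limit; and ``mimicking Theorem \ref{thm:consist}'' for consistency also fails, because without renormalization the limiting contrast is identically zero and identifiability is lost.

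The paper's proof is organized around exactly this bookkeeping, and differs from your plan precisely where it matters. Lemma \ref{lem3.5-6-alt} proves laws of large numbers renormalized by $\eta_n^{-1}=e^{\la(\d/\e)\D_n}$ (with uniformity obtained through an Ibragimov--Has'minskii tightness criterion); this is what delivers consistency and the Hessian. For the score, the drift-error term $I^{(1)}_{n,\e,\d}$ and the (drift $+$ Wiener) term $I^{(2)}_{n,\e,\d}$ keep the filter, while the compensated small-jump term $I^{(3)}_{n,\e,\d}=\s^{-1}(\d/\e)\sum_{k=1}^n\n_\th b_{k-1}(\th_0)\l[\int_\t^\T\int_{|z|\le\d/\e}z\,\wt{N}(\df t,\df z)\r]$ is summed over \emph{all} $n$ intervals, unfiltered. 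Condition \eqref{with-rho} is then used not as a Lindeberg rate, as you propose, but to annihilate the filtered Gaussian term: by independence of the filter from $W$, $\E\l|I^{(2)}_{n,\e,\d}\r|=O\l(\s^{-1}(\d/\e)e^{-\la(\d/\e)\D_n}\r)$, and $\s(\d/\e)e^{\la(\d/\e)\D_n}\ge\l(\s^{\rho}(\d/\e)\la(\d/\e)\D_n\r)^M/M!\to\infty$ for $M\ge 1/\rho$. The Brownian limit then comes from $I^{(3)}_{n,\e,\d}$ alone, via Asmussen--Rosinski's invariance principle (this is where \eqref{sigma(delta)-1} enters) combined with Jacod--Shiryaev's theorem on weak convergence of stochastic integrals; in particular $B$ arises from the small jumps over all intervals, not from a filtered martingale array over the surviving ones, whose contribution is only $O_p(\eta_n^{1/2})$. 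Any repair of your argument must either reproduce this $\eta_n$-renormalization together with the removal of the filter from the small-jump term (itself the delicate point of the paper's own proof), or change the claimed normalization; as written, your computation cannot reach the rate $(\s(\d/\e)\e)^{-1}$ stated in the theorem.
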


We shall give a concrete example that satisfies the above situation.
\if0
\begin{example}
Consider the case where $Q$ is a symmetric $\a$-stable process. The L\'evy measure $\nu$ is given by 
\[
\nu(\df s) = \frac{C}{|z|^{1+\a}}\,\df z, \quad \a \in (1,2),  
\]
where $C>0$ is a constant. This is an infinite-activity model with $\int |z|\wedge 1\,\nu(dz) = \infty$. 
In this case, the assumption Q1[$\g$] holds if $\g\in (0,\a^{-1})$ from Proposition \ref{prop:stable}. 
If we take $\e = O(n^{-\b})$ for some $\b>0$, then we easy see that the asymptotic conditions in Theorem \ref{thm:infinite-activity} hold true if 
\begin{align*}
1- \g < \b < \frac{3}{2} - \frac{1}{\a}
\end{align*}
while 
\[
\d/\e = O\l((n\log n)^{-1/\a}\r) \to 0. 
\]
In a model used in Section \ref{sec:2-dim}, this is hold if 
\[
\frac{1}{3} < \b < \frac{5}{6}
\]
\end{example}
\fi

\begin{remark}
The assumption \eqref{sigma(delta)-1} is to approximate a component of compensated small jumps by a Wiener process; see Theorem 2 by Asmussen and Rosinski \cite{ar01}. 
According to Proposition 2.1 in \cite{ar01}, a simple condition that 
\begin{align}
(\d/\e)^{-1}\s(\d/\e)\to \infty \label{sigma(delta)-3}
\end{align}
is sufficient for \eqref{sigma(delta)-1}. Moreover, note that \eqref{sigma(delta)-1} requires a high jump-activity, 
which excludes cases where $Q$ is a compound Poison process or a gamma process; see some Examples 2.2--2.4 in \cite{ar01}. 
Therefore it is assumed in \eqref{with-rho} that 
\[
\la(\d/\e)\to \infty\quad (\d\to 0). 
\]

\end{remark}

\begin{remark}
Although this is the result under an ideal situation that all the jumps are observable, 
we can imagine that such a phenomenon `approximately' occurs in simulations presented in Figures \ref{fig:qq1} and \ref{fig:qq2}. 
That is, when $\D_n$ and $\d_{n,\e}$ are sufficiently `small', a filter $\I_{\{|\DX|\le \d_{n,\e}\}}$ can successfully cut jumps whose sizes are larger than 
$\d_{n,\e}$. 
However, it would be hard to show the similar result when the observations are completely discrete as in our original setting because the filter $\I_{\{|\DX|\le \d_{n,\e}\}}$ 
can not exactly exclude an increment that includes the jumps whose sizes are larger than $\d_{n,\e}$; see also Shimizu \cite{s06} about the filter in infinite activity cases. 
The complete analysis in the discretely observed cases would be an important work in the future. 
\end{remark}

\begin{table}[htbp] 
\begin{center}
\begin{tabular}{rrrrr} \hline
\quad $\epsilon=0.4$\quad  &\quad  $n=1000$\quad  &\quad  $n=3000$\quad  &\quad  $n=5000$\quad  & True \\\hline
$\wh{\th}_{1,n,\e}^{LSE}$ & 2.54872 & 2.53364 & 2.78911 & 2.0 \\
(s.d.) & (2.4370) & (2.4489) & (2.5602) &  \\\hline
$\wh{\th}_{2,n,\e}^{LSE}$ & 1.72415 & 1.79755 & 1.78645 & 1.0 \\
(s.d.) & (3.5426) & (4.2814) & (2.9579) &  \\\hline
 &  &  &  &  \\
 &  &  &  &  \\\hline
$\epsilon=0.3$ & $n=1000$ & $n=3000$ & $n=5000$ & True \\\hline
$\wh{\th}_{1,n,\e}^{LSE}$ & 2.31618 & 2.29381 & 2.34757 & 2.0 \\
(s.d.) & (1.8248) & (1.7926) & (1.7429) &  \\\hline
$\wh{\th}_{2,n,\e}^{LSE}$ & 1.50664 & 1.5275 & 1.53632 & 1.0 \\
(s.d.) & (2.8685) & (2.7667) & (2.8160) &  \\\hline
 &  &  &  &  \\
 &  &  &  &  \\\hline
$\epsilon=0.05$ & $n=1000$ & $n=3000$ & $n=5000$ & True \\\hline
$\wh{\th}_{1,n,\e}^{LSE}$ & 2.00599 & 2.01071 & 2.01002 & 2.0 \\
(s.d.) & (0.2951) & (0.2913) & (0.2938) &  \\\hline
$\wh{\th}_{2,n,\e}^{LSE}$ & 1.05963 & 1.04438 & 1.06135 & 1.0 \\
(s.d.) & (1.3026) & (0.6773) & (0.7344) &  \\\hline
\end{tabular}
\end{center}
\caption{These are results for the LSE (without filter) based on Long {\it et al.} \cite{lss13}. 
We find that the standard deviation (s.d.) are large, especially, for $\wh{\th}_{2,n,\e}$, which implies an {\it unstability} of estimation. 
We would like to improve the stability by using a `filter'.}
\label{tab:lse}
\end{table}

\begin{figure}[htbp]
\begin{center}
\includegraphics[width=7cm,height=8cm]{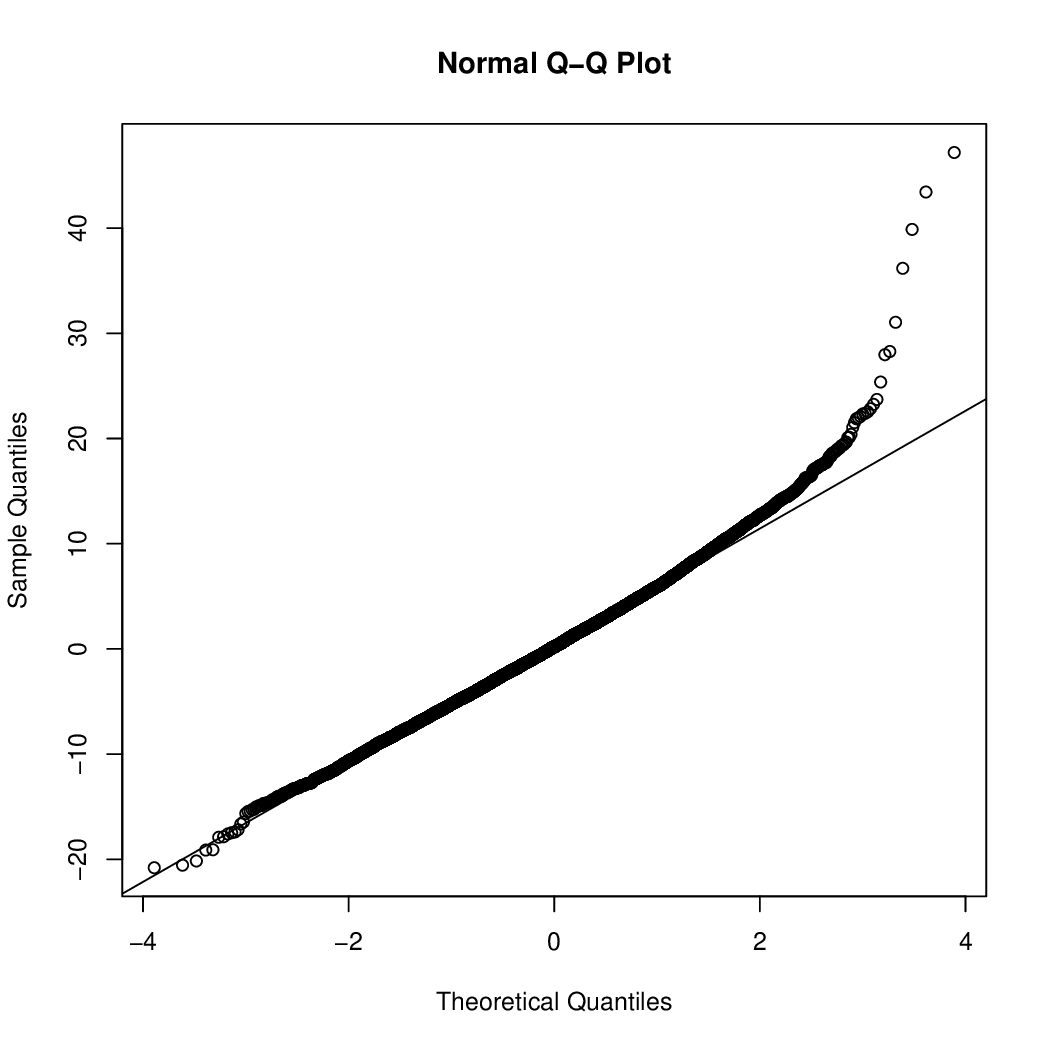}
\includegraphics[width=7cm,height=8cm]{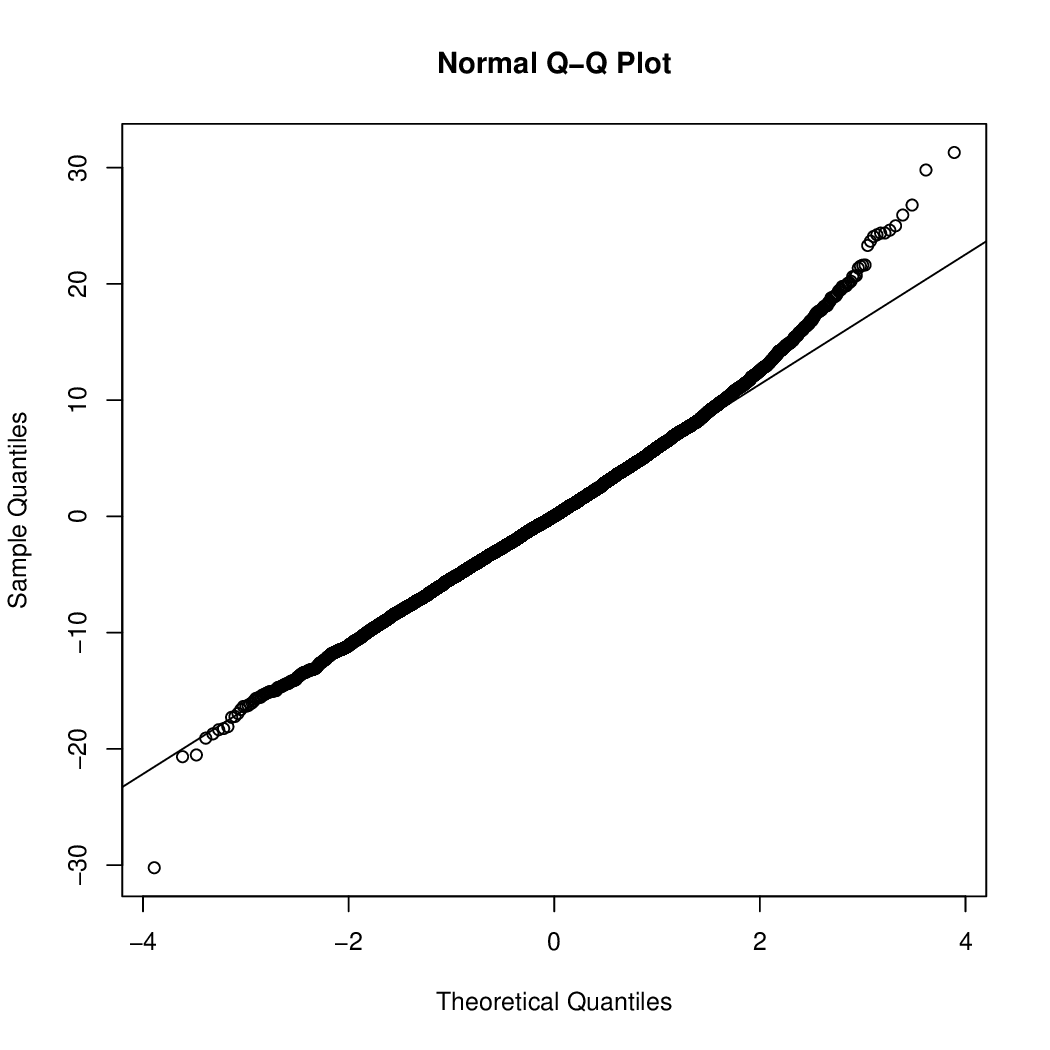}
\caption{Normal QQ-plots for $\wh{\th^{LSE}}_{1,n,\e}$. Left: $\e=0.1$, Right: $\e=0.05$. 
The results show that the right tail is especially heavier than that of normal distribution. 
}
\label{fig:qq0-1}
\end{center}
\end{figure}

\begin{figure}[htbp]
\begin{center}
\includegraphics[width=7cm,height=8cm]{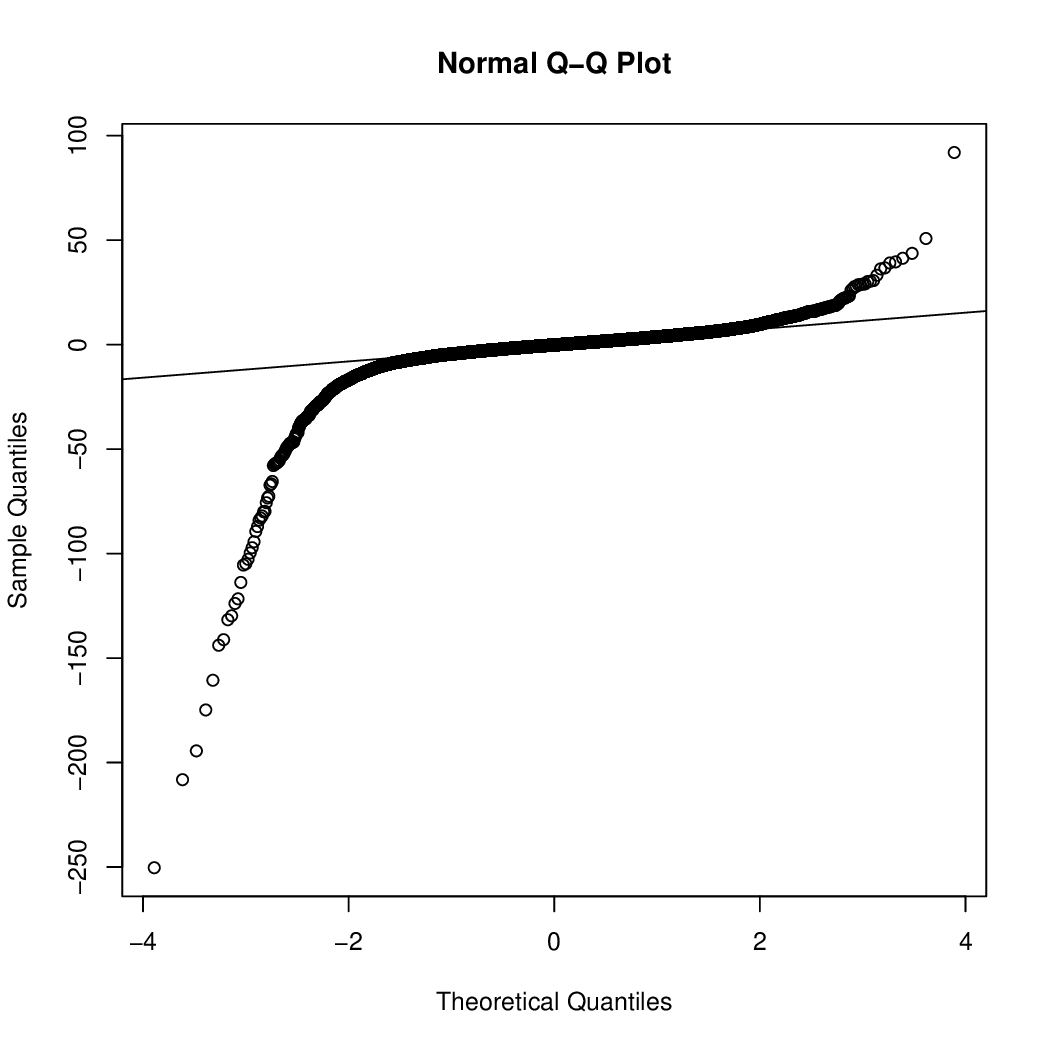}
\includegraphics[width=7cm,height=8cm]{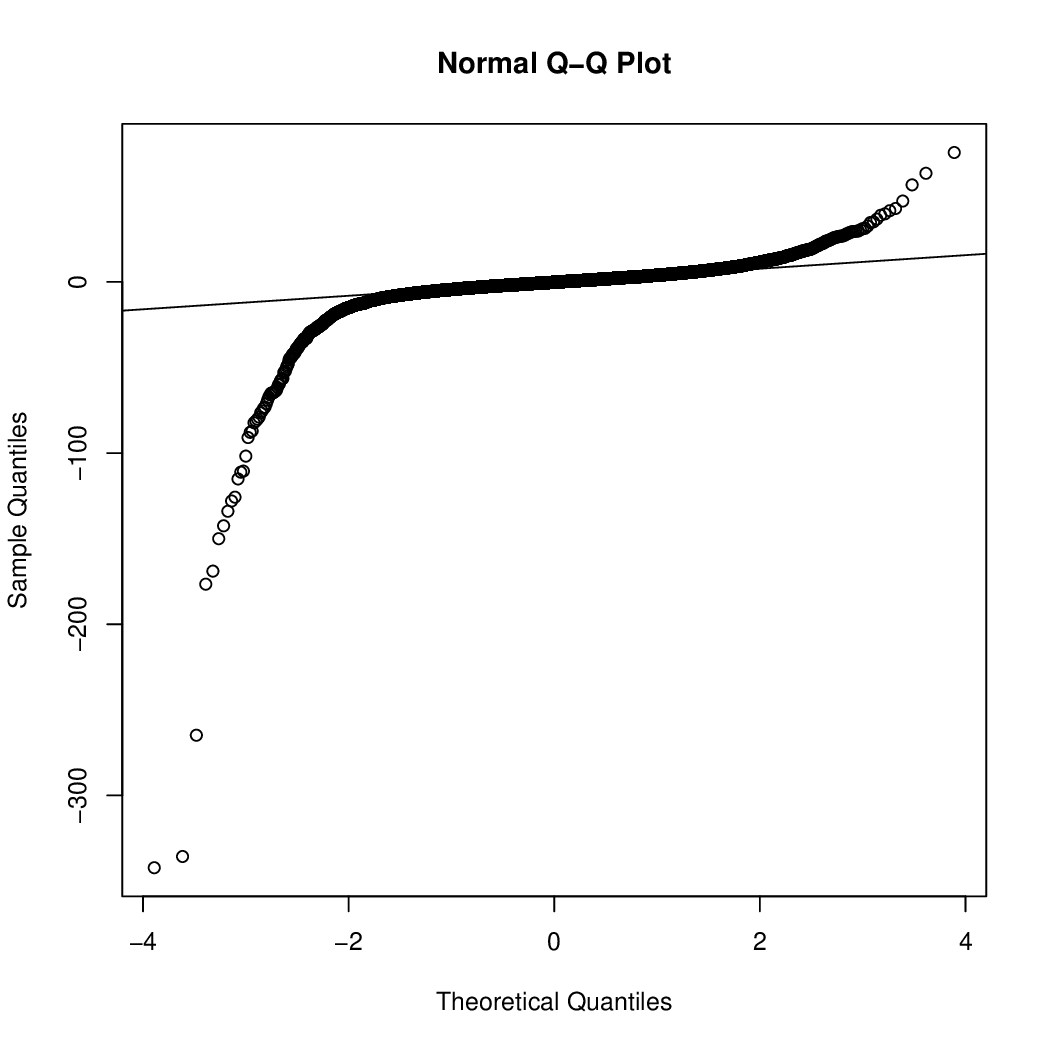}
\caption{Normal QQ-plots for $\wh{\th^{LSE}}_{2,n,\e}$. Left: $\e=0.1$, Right: $\e=0.05$. 
The results show that the both of tails are heavier than those of normal distribution.}
\label{fig:qq0-2}
\end{center}
\end{figure}

\begin{table}[htbp]
\begin{center}
\begin{tabular}{rrrrr} \hline
\quad $\epsilon=0.4$\quad  & \quad $n=1000$\quad & \quad $n=3000$\quad  & \quad $n=5000$\quad  & True \\\hline
$\wh{\th}_{1,n,\e}$ & 2.46071 & 2.45564 & 2.43215 & 2.0 \\
(s.d.) & (2.1968) & (2.1777) & (2.1860) &  \\\hline
$\wh{\th}_{2,n,\e}$ & 1.14289 & 1.15283 & 1.16222 & 1.0 \\
(s.d.) & (0.8721) & (0.88426) & (0.8856) &  \\\hline
 &  &  &  &  \\
 &  &  &  &  \\\hline
$\epsilon=0.3$ & $n=1000$ & $n=3000$ & $n=5000$ & True \\\hline
$\wh{\th}_{1,n,\e}$ & 2.25007 & 2.25972 & 2.25829 & 2.0 \\
(s.d.) & (1.6149) & (1.6121) & (1.6249) &  \\\hline
$\wh{\th}_{2,n,\e}$ & 1.08105 & 1.1047 & 1.09448 & 1.0 \\
(s.d.) & (0.6498) & (0.6489) & (0.6563) &  \\\hline
 &  &  &  &  \\
 &  &  &  &  \\\hline
$\epsilon=0.05$ & $n=1000$ & $n=3000$ & $n=5000$ & True \\\hline
$\wh{\th}_{1,n,\e}$ & 2.00619 & 2.00827 & 2.00681 & 2.0 \\
(s.d.) & (0.2623) & (0.2594) & (0.2652) &  \\\hline
$\wh{\th}_{2,n,\e}$ & 0.98972 & 0.99936 & 1.00039 & 1.0 \\
(s.d.) & (0.10098) & (0.1031) & (0.1037) &  \\\hline
\end{tabular}
\end{center}
\caption{Results with filter: $\d_{n,\e} = \e/5$. Compared with the LSE, the improvement for s.d. of $\wh{\th}_{2,n,\e}$ is drastic although the one for $\wh{\th}_{1,n,\e}$ is less. 
To make the estimation of $\th_1$ accurate, we need to make $\e$ smaller. 
The values of $(n\d_{n,\e}, \d_{n,\e}\e^{-1}n^{1/4})$ seems enough to meet the asymptotic conditions such that they must tend to $(\infty,0)$. }
\label{tab:tte1}
\end{table}

\begin{table}[htbp]
\begin{center}
\begin{tabular}{rrrrr} \hline
\quad $\epsilon=0.4$\quad  & \quad $n=1000$\quad  & \quad $n=3000$\quad  & \quad $n=5000$\quad  & True \\\hline
$\wh{\th}_{1,n,\e}$ & 2.10973 & 2.24022 & 2.46723 & 2.0 \\
(s.d.) & (2.0587) & (2.1736) & (2.2064) &  \\\hline
$\wh{\th}_{2,n,\e}$ & 1.06597 & 1.10031 & 1.10585 & 1.0 \\
(s.d.) & (0.6997) & (0.7226) & (0.7352) &  \\\hline
 &  &  &  &  \\
 &  &  &  &  \\\hline
$\epsilon=0.3$ & $n=1000$ & $n=3000$ & $n=5000$ & True \\\hline
$\wh{\th}_{1,n,\e}$ & 1.92432 & 2.25282 & 2.24733 & 2.0 \\
(s.d.) & (1.4600) & (1.6221) & (1.6021) &  \\\hline
$\wh{\th}_{2,n,\e}$ & 1.02419 & 1.05871 & 1.05869 & 1.0 \\
(s.d.) & (0.5216) & (0.5453) & (0.5349) &  \\\hline
 &  &  &  &  \\
 &  &  &  &  \\\hline
$\epsilon=0.05$ & $n=1000$ & $n=3000$ & $n=5000$ & True \\\hline
$\wh{\th}_{1,n,\e}$ & 0.78959 & 2.00133 & 2.00864 & 2.0 \\
(s.d.) & (0.2165) & (0.2628) & (0.2653) &  \\\hline
$\wh{\th}_{2,n,\e}$ & 0.84069 & 0.98834 & 0.99450 & 1.0 \\
(s.d.) & (0.0754) & (0.0853) & (0.0865) &  \\\hline
\end{tabular}
\end{center}
\caption{Results with filter: $\d_{n,\e} = \e/10$. The s.d. for $\wh{\th}_{2,n,\e}$ is smaller than those with $\d_{n,\e}/5$ as well as those for $\wh{\th}_{1,n,\e}$. 
However, we should be careful by observing the case of $(n,\e)=(1000,0.05)$, where both estimators are negatively biased. This would be because $n\d_{n,\e}=5$ is too small 
to meet the corresponding asymptotic condition: $n\d_{n,\e}\to \infty$.}
\label{tab:tte2}
\end{table}

\begin{figure}[htbp]
\begin{center}
\includegraphics[width=7cm,height=8cm]{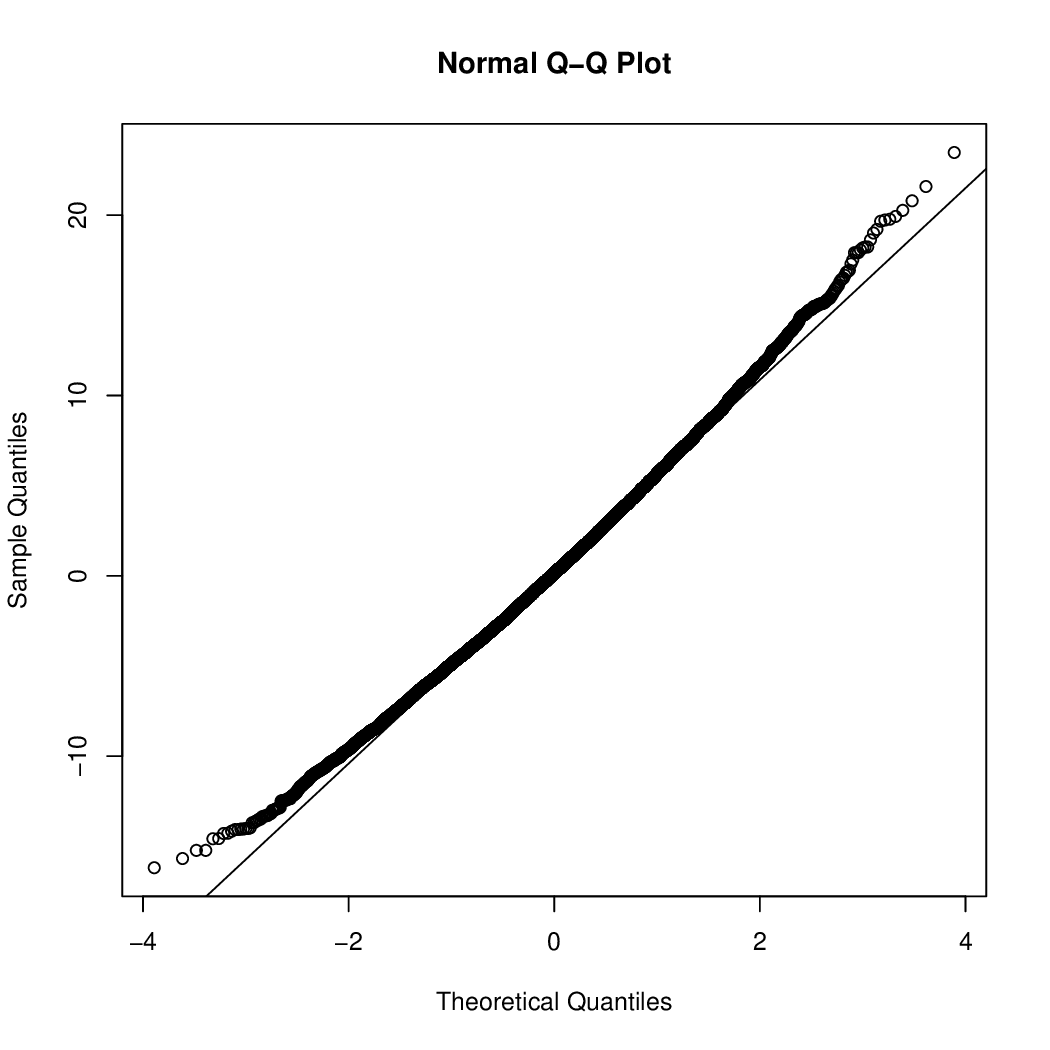}
\includegraphics[width=7cm,height=8cm]{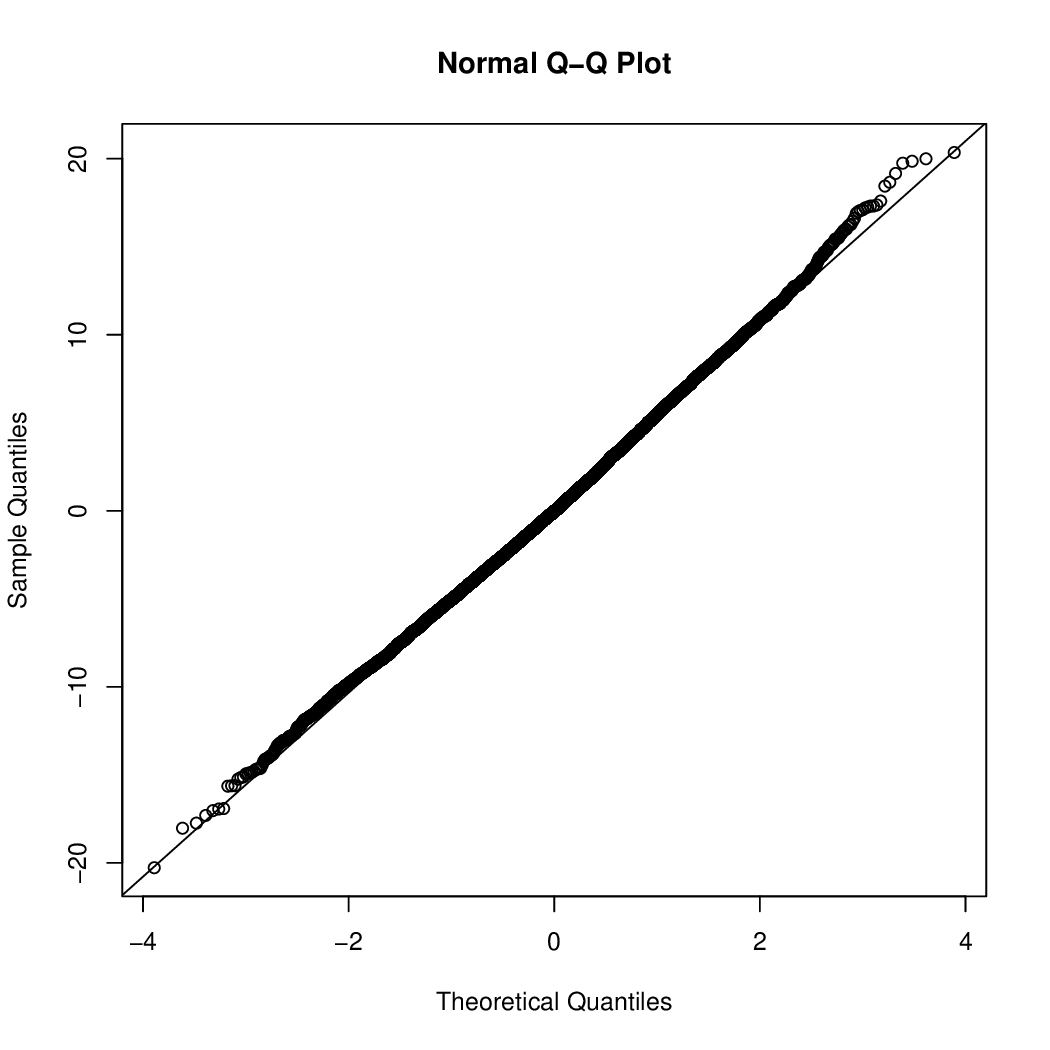}
\caption{Normal QQ-plots for $\wh{\th}_{1,n,\e}$. Left: $\e=0.1$, Right: $\e=0.05$. 
When $\e$ is small such as $\e=0.05$, the distribution of $\wh{\th}_{1,n,\e}$ seems almost Gaussian.
}
\label{fig:qq1}
\end{center}
\end{figure}

\begin{figure}[htbp]
\begin{center}
\includegraphics[width=7cm,height=8cm]{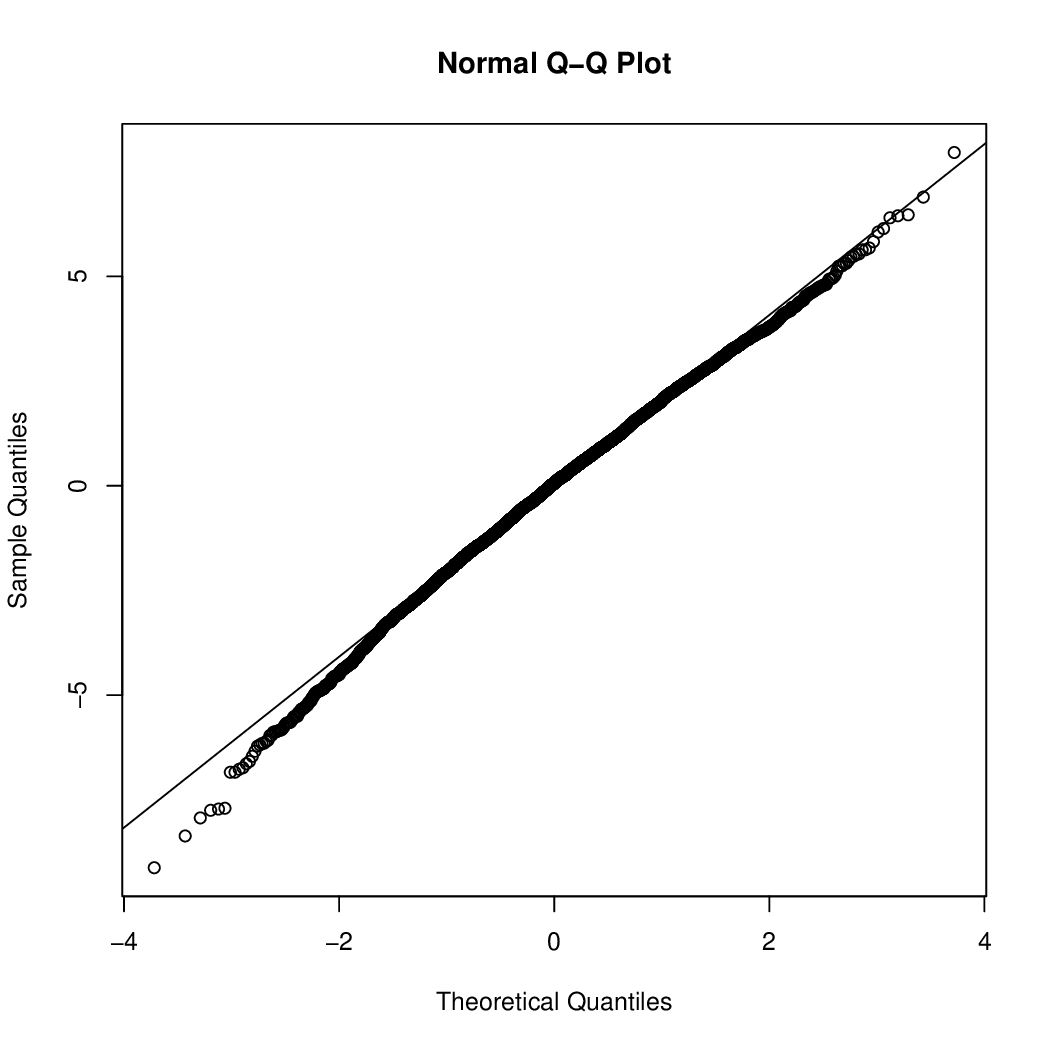}
\includegraphics[width=7cm,height=8cm]{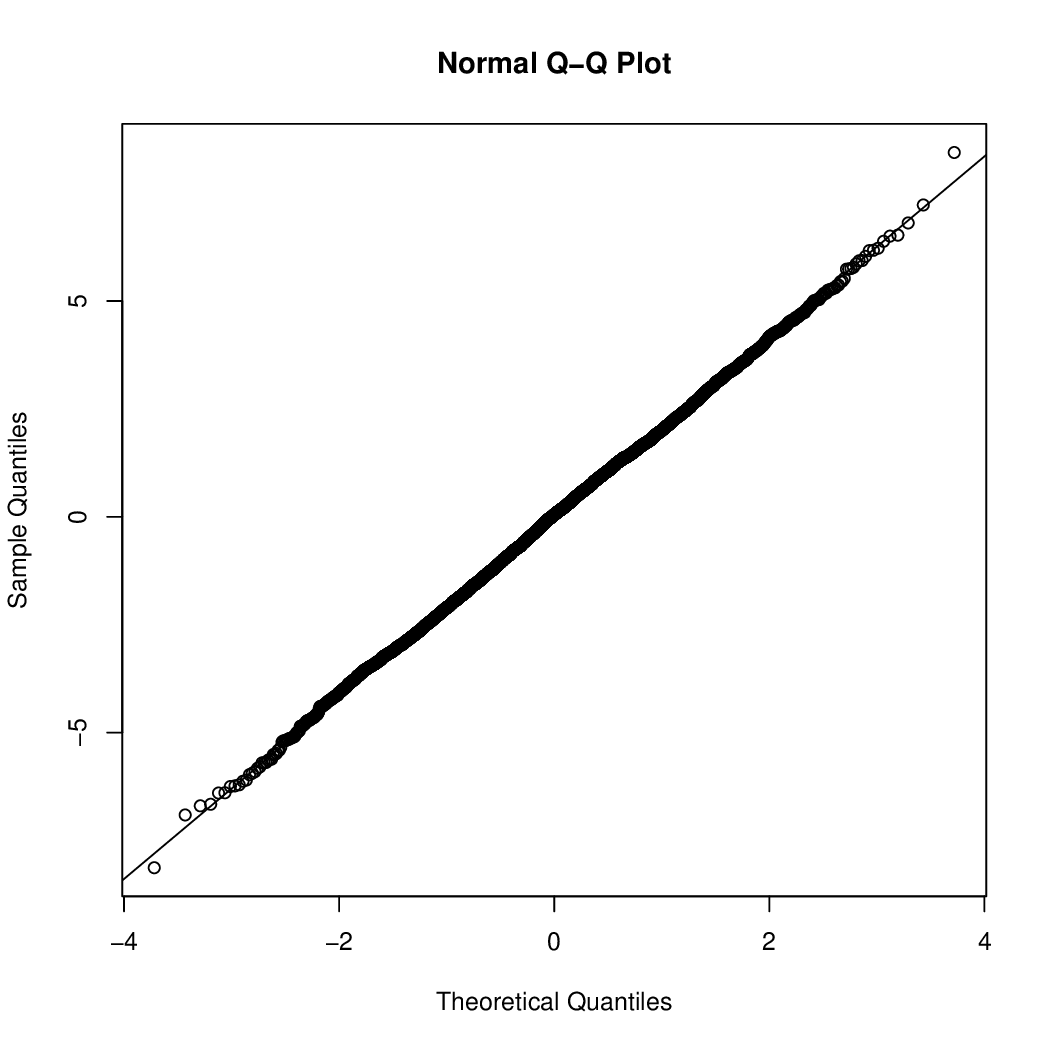}
\caption{Normal QQ-plots for $\wh{\th}_{2,n,\e}$. Left: $\e=0.1$, Right: $\e=0.05$. 
When $\e$ is small such as $\e=0.05$, the distribution of $\wh{\th}_{1,n,\e}$ seems almost Gaussian.}
\label{fig:qq2}
\end{center}
\end{figure}

\begin{figure}[htbp]
\begin{center}
\includegraphics[width=10cm,height=8cm]{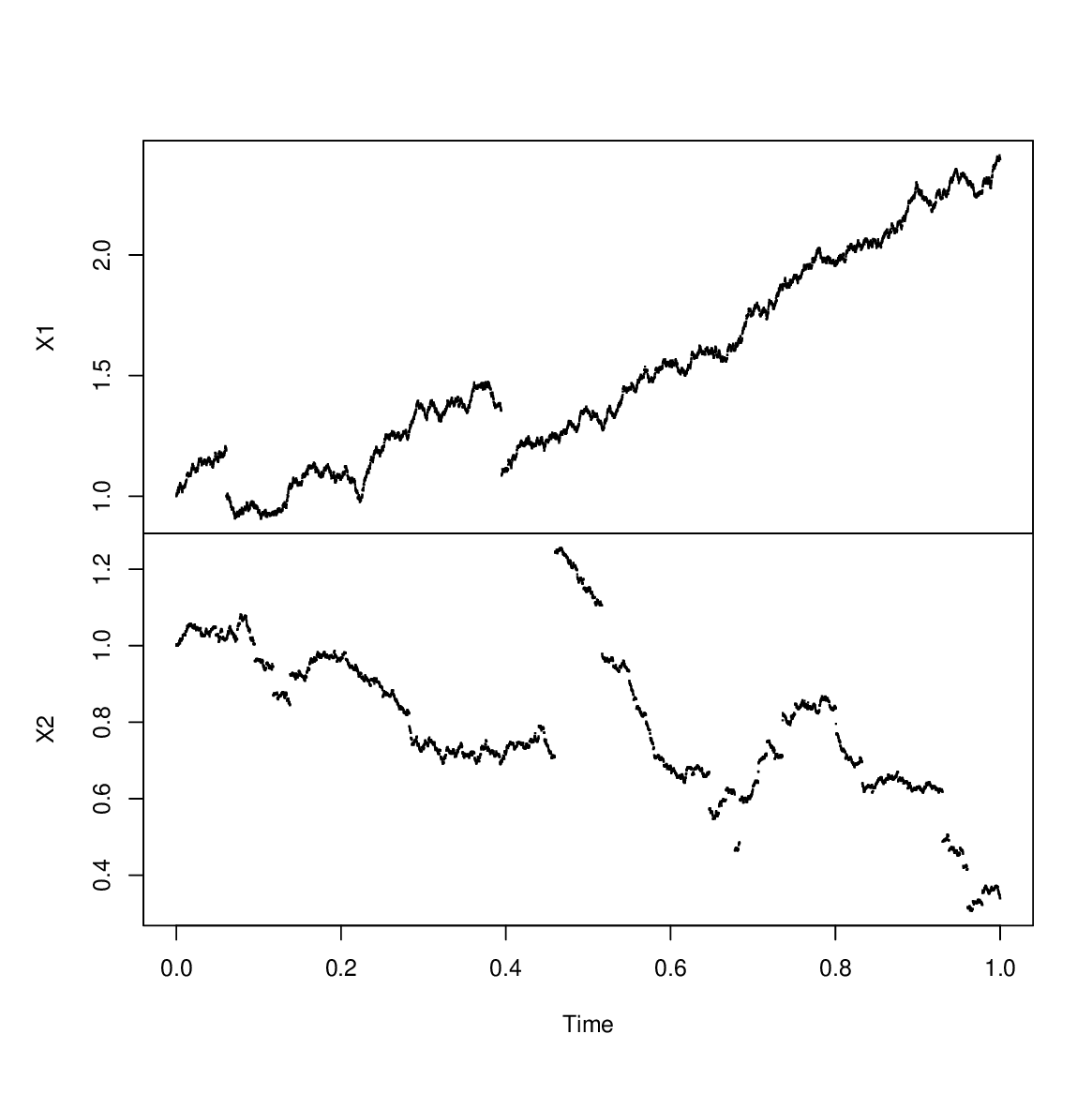}
\caption{A sample path of Model \eqref{sim:ex1} with $(\theta_1,\theta_2,\k,\xi,\alpha)=(2,1,5,3,3/2)$ and $\varepsilon=0.4$.}
\label{fig:path}
\end{center}
\end{figure}

\section{Proofs}\label{sec:proof}

\subsection{Preliminary lemmas}
We shall first establish some preliminary lemmas to show the main theorems later. 

To evaluate functions of discrete samples, we use the following notation as in Long {\it et al.} \cite{lss13}: 
\begin{itemize}
\item Discretized process: $Y_t^{n,\e}:= X_{\lfloor nt\rfloor/n}$ for $t\ge 0$ and $n\in \N$, 
where $\lfloor x \rfloor$ stands for the integer part of $x\in \R$. 

\item Since $Q$ is a semimartingale, we can consider the Doob-Meyer decomposition: $Q= A + M$, where $A$ is a process with finite variation, and $M$ is an $\F_t$-local martingale with $A_0=M_0=0\ a.s.$ 

\item A stopping time for localization: for $m\in \N$, 
\[
\tau_m^{n,\e}=\inf\{t\ge 0\,:\,|X_t^0|\wedge |Y_t^{n,\e}| \ge m\} \wedge T_m, 
\]
where $T_m:=\inf\{t\ge 0\,:\,[M,M]_t\wedge \int_0^t |\df A_s| \ge m\}$. As a convention, $\inf \emptyset =\infty$. Hence, note that $\lim_{m\to \infty}T_m=\infty$ almost surely. 
\end{itemize}

\begin{lemma}\label{lem:3.1}
Under A1, it holds that   
\begin{align}
\l\|Y^{n,\e} - X^0\r\|_* \toP 0. \label{lem:3.1-1}
\end{align}
In addition, suppose Q2[q] for some $q>0$. Then it holds that 
\begin{align} 
\E\l[\l\|Y^{n,\e} - X^0\r\|_*^q\r] =O\l(\frac{1}{n^q} + \e^q\r). \label{lem:3.1-2}
\end{align}
\end{lemma}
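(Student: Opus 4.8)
The plan is to compare the discretized process $Y^{n,\e}$ not with the driven process $X^\e$ (whose jumps would destroy any uniform control) but directly with the deterministic limit $X^0$. Writing $\lfloor nt\rfloor/n$ for the last grid point at or before $t$, I would split
\[
Y_t^{n,\e} - X_t^0 = \l(X_{\lfloor nt\rfloor/n}^\e - X_{\lfloor nt\rfloor/n}^0\r) + \l(X_{\lfloor nt\rfloor/n}^0 - X_t^0\r),
\]
which yields
\[
\l\|Y^{n,\e} - X^0\r\|_* \le \l\|X^\e - X^0\r\|_* + \sup_{t\in[0,1]}\l|X_{\lfloor nt\rfloor/n}^0 - X_t^0\r|.
\]
The point of comparing against $X^0$ is that the left-point discretization lag now acts on the \emph{continuous} trajectory $X^0$, so the jumps of $X^\e$ never enter.

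The second term is elementary and purely deterministic: by A1 the map $x\mapsto b(x,\th_0)$ is continuous, and $X^0$ is continuous on the compact $[0,1]$, so $t\mapsto b(X_t^0,\th_0)$ is bounded and $X^0$ is Lipschitz; hence $\sup_t|X_{\lfloor nt\rfloor/n}^0 - X_t^0| = O(1/n)$. For the first term I would subtract the integral equations for $X^\e$ and $X^0$ and use A1 to obtain
\[
\l|X_t^\e - X_t^0\r| \le C\int_0^t \l|X_s^\e - X_s^0\r|\,\df s + \e\l\|Q^\e\r\|_*,
\]
whence Gronwall's inequality gives $\l\|X^\e - X^0\r\|_* \lesssim \e\,\l\|Q^\e\r\|_*$.

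For \eqref{lem:3.1-1} it then suffices to show $\e\,\|Q^\e\|_* \toP 0$. Since $Q$ is a semimartingale it is c\`adl\`ag, so $\|Q\|_* < \infty$ a.s., and \eqref{Q} gives $\|Q^\e\|_* \le \|Q\|_* + \|Q^\e - Q\|_*$ with $\|Q^\e - Q\|_* \to 0$ a.s.; thus $\{\|Q^\e\|_*\}$ is tight and, multiplied by $\e\to 0$, converges to $0$ in probability. Combining with the $O(1/n)$ term closes the first assertion.

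For \eqref{lem:3.1-2} I would raise the displayed bound to the $M$-th power to get $\E[\|Y^{n,\e}-X^0\|_*^M] \lesssim \e^M\,\E[\|Q^\e\|_*^M] + n^{-M}$, so the whole statement reduces to the uniform bound $\E[\|Q^\e\|_*^M]=O(1)$. I expect this to be the one genuine obstacle. Via the Doob--Meyer decomposition, $\|Q\|_* \le TV(A) + \|M\|_*$, so Q2[M] controls $\E[TV(A)^M]$ directly, and the Burkholder--Davis--Gundy inequality gives $\E[\|M\|_*^M]\lesssim \E[[M,M]_1^{M/2}]<\infty$; hence $\E[\|Q\|_*^M]<\infty$. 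When $Q^\e\equiv Q$ (the L\'evy case underlying the main theorems) this is already enough. In the general semimartingale setting one applies the same $TV(A^\e)+\|M^\e\|_*$ and BDG estimate to the prelimit decomposition $Q^\e = A^\e + M^\e$, uniformly in $\e$; this is precisely the step where the qualitative convergence in \eqref{Q} must be supplemented by a uniform-integrability-type moment control of $\|Q^\e - Q\|_*$.
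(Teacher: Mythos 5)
Your proof of \eqref{lem:3.1-1} is correct and is essentially the paper's own argument: both reduce the problem via Gronwall to the bound $\|X-X^0\|_* \lesssim \e\,\|Q^\e\|_*$ and then use boundedness in probability of $\|Q^\e\|_*$, which follows from \eqref{Q} together with the a.s. finiteness of $\|Q\|_*$. The only difference is cosmetic: you put the discretization lag on the deterministic path $X^0$ (a purely deterministic $O(1/n)$ term, since $X^0$ is Lipschitz), whereas the paper writes $\|Y^{n,\e}-X^0\|_* \le \|Y^{n,\e}-X\|_* + \|X-X^0\|_*$ and controls the lag on $X$ itself; your variant is marginally cleaner, but both work.

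The gap is in \eqref{lem:3.1-2}, precisely at the step you flagged and left open. The paper neither applies the Burkholder--Davis--Gundy inequality to the prelimit decomposition $Q^\e=A^\e+M^\e$ (whose total variation and bracket are \emph{not} controlled by Q2), nor requires any moment control of $\|Q^\e-Q\|_*$. Instead it exploits the almost sure uniform convergence in \eqref{Q} once more: for $\e$ small enough, $\|Q^\e\|_* \le \|Q\|_* + \|Q^\e-Q\|_* \lesssim \|Q\|_*+1$ almost surely (display \eqref{sup-Q} in the paper). This pathwise domination transfers the moment problem from $Q^\e$ to the limit $Q$, and then $\E\|Q\|_*^M \lesssim \E[TV(A)^M] + \E\l[[M,M]_1^{M/2}\r]<\infty$ under Q2[$M$] --- which is exactly your Doob--Meyer plus Burkholder--Davis--Gundy computation, applied only to $Q$. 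So the missing idea is not an additional uniform-integrability hypothesis but the domination \eqref{sup-Q}; once it is in hand, your own estimate closes the argument. (To be fair, your instinct that qualitative a.s. convergence alone cannot yield uniform-in-$\e$ moment bounds is sound: the threshold ``$\e$ small enough'' behind \eqref{sup-Q} is in principle $\omega$-dependent, so the paper's step is itself delicate; but this domination is the stated mechanism of the paper's proof, and granting it, nothing beyond \eqref{Q} and Q2[$M$] is invoked.)
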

\begin{proof}
From \eqref{Q}, it follows for $\e$ small enough that 
\begin{align}
\|Q^\e\|_* \lesssim \|Q\|_* + 1.  \label{sup-Q}
\end{align}
Hence we can take the same argument as in the proof of Lemma 3.1, (3.1) by Long {\it et al.} \cite{lss13} to obtain that 
\begin{align}
\l\|X - X^0\r\|_*\lesssim \e \|Q^\e\|_* \lesssim \e (\|Q\|_*+1) \toP 0,  \label{X-X^0}
\end{align}
since $\|Q\|_*$ is bounded in probability. Hence the fact that $\lfloor nt\rfloor/n\to t$ as $n\to \infty$ yields that 
\begin{align*}
\l\|Y^{n,\e} - X^0\r\|_* \le \l\|Y^{n,\e} - X\r\|_* + \l\|X - X^0\r\|_*  \toP 0. 
\end{align*}
This is the proof of \eqref{lem:3.1-1}. 

Moreover, note that 
\begin{align*}
\E\l[\l\|Y^{n,\e} - X^0\r\|_*^q\r] &\lesssim \E\l[\sup_{t\in [0,1]}\l|X_{\lfloor nt\rfloor/n} - X_t\r|^q\r] + \E\l[\l\|X - X^0\r\|_*^q\r] \\
&\lesssim \E\l[\sup_{t\in [0,1]}\l|\int_t^{\lfloor nt\rfloor/n} b(X_s,\th_0)\,\df s\r|^q \r] \\
&\quad + \e^q \E\l[\sup_{t\in [0,1]}\l|Q^\e_{\lfloor nt\rfloor/n} - Q^\e_t\r|^q\r] + \E\l[\l\|X - X^0\r\|_*^q\r]. 
\end{align*}
Noticing the linear growthness of the function $b$: $|b(x,\th)|\lesssim 1 + |x|$, and the inequalities \eqref{sup-Q} and \eqref{X-X^0}, 
we have that
\begin{align*}
\E\l[\l\|Y^{n,\e} - X^0\r\|_*^q\r]
&\lesssim \E\l[\sup_{t\in [0,1]}\l|\int_t^{\lfloor nt\rfloor/n} (1 + \|X\|_*) \,\df s\r|^q \r] \\
&\quad + \e^q \E\l[\|Q\|_*^q\r] + \E\l[\l\|X - X^0\r\|_*^q\r] \\
&\lesssim \sup_{t\in [0,1]}\l[\frac{nt - \lfloor nt\rfloor}{n}\r]^q \E\l[\l(1 + \|X\|_*\r)^q\r] + \e^q\l(1 + \E\l[\|Q\|_*^q\r]\r) \\
&\lesssim \sup_{t\in [0,1]}\l[\frac{nt - \lfloor nt\rfloor}{n}\r]^q + \e^q \l(1 + \E\|A\|_*^q + \e^q \E\l[[M,M]^{q/2}\r]\r)  \\
& =O\l(\frac{1}{n^q} + \e^q\r),  
\end{align*}
under Q2[$q$]. We used the Burkholder-Davis-Gundy inequality in the last inequality. This completes the proof. 
\end{proof}

Form \eqref{lem:3.1-2} and \eqref{X-X^0} in the above proof, the following corollary is obvious. 

\begin{cor}\label{cor:lem:3.1}
Suppose A1 and Q2[$q$] for some $q>0$. Then it holds that 
\begin{align}
\sup_{\e>0} \E\l[\l(\e^{-1} \|X-X^0\|_*\r)^q\r] < \infty. \label{cor:lem:3.1-1}
\end{align}
In addition, if $(n\e)^{-1} = O(1)$, then 
\begin{align}
\sup_{n\in \N,\e>0} \E\l[\l(\e^{-1} \|Y^{n,\e}-X^0\|_*\r)^q\r] < \infty. \label{cor:lem:3.1-2}
\end{align}
\end{cor}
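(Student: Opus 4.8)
The plan is to read both bounds straight off the proof of Lemma \ref{lem:3.1}, so that the work reduces to two short observations plus a finiteness check. For \eqref{cor:lem:3.1-1} I would start from the pathwise estimate \eqref{X-X^0}, namely $\|X - X^0\|_* \lesssim \e(\|Q\|_* + 1)$, and divide by $\e$ to obtain $\e^{-1}\|X-X^0\|_* \lesssim \|Q\|_* + 1$ with a constant that does not involve $\e$. Raising to the power $M$ and taking expectations then gives $\sup_{\e>0}\E[(\e^{-1}\|X-X^0\|_*)^M] \lesssim \E[(\|Q\|_*+1)^M]$, so everything hinges on showing that the right-hand side is finite under Q2[$M$].

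To establish finiteness of $\E[\|Q\|_*^M]$, I would use the Doob--Meyer decomposition \eqref{doob}, $Q = A + M$, and split $\|Q\|_* \le \|A\|_* + \|M\|_*$. For the finite-variation part, $\|A\|_* = \sup_{t}|A_t| \le TV(A)$ since $A_0 = 0$, hence $\E[\|A\|_*^M] \le \E[TV(A)^M]$. For the local-martingale part $M$, the Burkholder--Davis--Gundy inequality controls the running maximum by the quadratic variation, giving $\E[\|M\|_*^M] \lesssim \E[[M,M]_1^{M/2}]$. Both terms are finite precisely by Q2[$M$], which yields \eqref{cor:lem:3.1-1}.

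For \eqref{cor:lem:3.1-2} I would simply divide the moment bound \eqref{lem:3.1-2} by $\e^M$: since $\E[\|Y^{n,\e}-X^0\|_*^M] = O(n^{-M} + \e^M)$, we get $\E[(\e^{-1}\|Y^{n,\e}-X^0\|_*)^M] = O((n\e)^{-M} + 1)$. Under the hypothesis $(n\e)^{-1}=O(1)$ the factor $(n\e)^{-M}$ stays bounded, so the right-hand side is bounded uniformly in $(n,\e)$ and \eqref{cor:lem:3.1-2} follows.

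The only genuine subtlety, and the point I would be most careful about, is uniformity. The estimate \eqref{lem:3.1-2} is phrased as an asymptotic $O$-statement, so to conclude a finite supremum over all $n\in\N$ and all $\e>0$ I must verify that the implied constant is uniform; this is the case because it is assembled only from the BDG constant and the finite $M$-th moments supplied by Q2[$M$], none of which depends on $n$ or $\e$. A secondary point is the admissible range of $\e$ in \eqref{cor:lem:3.1-1}: the comparison $\|Q^\e\|_*\lesssim\|Q\|_*+1$ in \eqref{sup-Q} rests on the almost sure uniform convergence $Q^\e\to Q$ of \eqref{Q} and therefore holds for $\e$ small; since the small-noise regime $\e\to0$ is the only one of interest, this suffices for the stated supremum.
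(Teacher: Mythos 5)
Your proof is correct and takes essentially the same route as the paper: the paper's own one-line proof derives the corollary from exactly the two facts you use, namely the pathwise bound \eqref{X-X^0} (with $\E\|Q\|_*^M<\infty$ obtained from the decomposition \eqref{doob}, $TV(A)$, and the Burkholder--Davis--Gundy inequality, just as in the proof of Lemma \ref{lem:3.1}) and the moment estimate \eqref{lem:3.1-2} divided by $\e^M$. Your closing remarks on uniformity of the constants and on the small-$\e$ validity of \eqref{sup-Q} simply make explicit what the paper leaves implicit.
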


\begin{lemma}\label{lem:3.2}
Under A1, it follows that 
\[
\lim_{m\to \infty} \tau_m^{n,\e}= \infty\quad  a.s., 
\] 
uniformly in $n\in \N$ and $\e\in [0,1]$. 
\end{lemma}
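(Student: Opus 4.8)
The plan is to exploit the minimum structure of $\tau_m^{n,\e}$ and to treat its two defining stopping times separately. Writing $\s_m^{n,\e}:=\inf\{t\ge 0:|X_t^0|\wedge|Y_t^{n,\e}|\ge m\}$, we have $\tau_m^{n,\e}=\s_m^{n,\e}\wedge T_m$, and both $\s_m^{n,\e}$ and $T_m$ are non-decreasing in $m$. Since the minimum of two non-decreasing families converges to the minimum of the limits, it suffices to prove that $\s_m^{n,\e}\to\infty$ and $T_m\to\infty$ as $m\to\infty$, each uniformly in $n\in\N$ and $\e\in[0,1]$.

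First I would dispose of $\s_m^{n,\e}$. The point is that the $\wedge$ lets us dominate from below by a purely deterministic quantity: since $|X_t^0|\wedge|Y_t^{n,\e}|\le|X_t^0|$, we have $\s_m^{n,\e}\ge\s_m^0:=\inf\{t\ge 0:|X_t^0|\ge m\}$. Under A1 the ODE solution $X^0$ is continuous with at most linear growth (Gr\"onwall), so $\sup_{t\in[0,N]}|X_t^0|<\infty$ for every $N$, whence $\s_m^0>N$ as soon as $m>\sup_{t\in[0,N]}|X_t^0|$. As $\s_m^0$ depends on neither $n$ nor $\e$, this yields $\s_m^{n,\e}\to\infty$ uniformly in $n$ and $\e$ at no cost, and it is precisely the $\wedge$ that removes any need to control $Y^{n,\e}$ here.

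It then remains to handle $T_m$, which does not involve $n$. By monotonicity of $[M^\e,M^\e]$ and of $t\mapsto\int_0^t|\df A_s^\e|$, one has $T_m\ge N$ if and only if $[M^\e,M^\e]_N\wedge\int_0^N|\df A_s^\e|<m$; hence $T_m\to\infty$ uniformly in $\e$ is equivalent to the a.s.\ finiteness, for every $N$, of $K_N:=\sup_{\e\in[0,1]}\l([M^\e,M^\e]_N\wedge\int_0^N|\df A_s^\e|\r)$. Once $K_N<\infty$ a.s., choosing any $m>K_N\vee\sup_{t\in[0,N]}|X_t^0|$ forces $\tau_m^{n,\e}>N$ simultaneously for all $n$ and all $\e$, which is exactly the asserted uniform divergence.

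The main obstacle is therefore the bound $K_N<\infty$ a.s. In the benchmark situation where $Q^\e\equiv Q$ is a fixed (e.g.\ L\'evy) semimartingale, the canonical decomposition $(A^\e,M^\e)=(A,M)$ is independent of $\e$ and $K_N\le[M,M]_N\wedge\int_0^N|\df A_s|<\infty$ trivially. The genuinely $\e$-dependent case is more delicate: the uniform convergence \eqref{Q} does not by itself bound either the total variation of $A^\e$ or the quadratic variation of $M^\e$ uniformly in $\e$, and even the sup-norm control \eqref{sup-Q} is of no help since the sup-norm does not dominate variation. Here the $\wedge$ is again decisive, because it suffices to bound just one of the two families $\{[M^\e,M^\e]_N\}_\e$ and $\{\int_0^N|\df A_s^\e|\}_\e$ uniformly in $\e$; this I would obtain from the convergence of the canonical decomposition of $Q^\e$ to that of $Q$ available in the concrete models of Section~\ref{sec:Q} (for instance via the u.c.p.-continuity of the stochastic integrals in the Markovian-noise case), a convergent $\e$-indexed family being automatically bounded. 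Pinning down this decomposition-level convergence from the sole hypothesis \eqref{Q} is the step that would require the most care.
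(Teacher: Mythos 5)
Your proof splits $\tau_m^{n,\e}$ into the hitting-time part $\s_m^{n,\e}$ and the part $T_m$, and the two halves fare very differently. For $\s_m^{n,\e}$ your shortcut is formally correct for the statement as printed: since $|X_t^0|\wedge|Y_t^{n,\e}|\ge m$ forces $|X_t^0|\ge m$, the deterministic continuity of $X^0$ disposes of this part with no control on $Y^{n,\e}$ at all, which is indeed simpler than what the paper does. But the shortcut leans entirely on the ``$\wedge$'' in the definition of $\tau_m^{n,\e}$, and that $\wedge$ is almost certainly a typo for ``$\vee$'': in the proof of Lemma~\ref{lem:3.4} the paper uses precisely the implication ``$t\le \tau_m^{n,\e}$ $\Rightarrow$ $|Y_t^{n,\e}|\le m$'', which is false under the $\wedge$-reading (before $\tau_m^{n,\e}$ one only knows that \emph{one} of the two processes is below $m$) and true under the $\vee$-reading. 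Under the intended definition your argument collapses, and one must bound $Y^{n,\e}$ itself uniformly in $n$ and $\e$; that is exactly what the paper's proof consists of: by \eqref{sup-Q} and the Gronwall argument of Long et al.\ \cite{lss13}, $\sup_{n\in\N,\e>0}|Y_t^{n,\e}| \le \sqrt{2}\l(|x|+\sup_{s\in[0,t]}|Q_s|+t\r)e^{Ct^2}<\infty$ a.s. A proof that serves the purpose for which the lemma is invoked later cannot avoid this estimate, and yours does not contain it.

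For $T_m$, your reduction to the a.s.\ finiteness of $K_N=\sup_{\e\in[0,1]}\l([M^\e,M^\e]_N\wedge\int_0^N|\df A_s^\e|\r)$ is correct, and so is your diagnosis that \eqref{Q} alone cannot deliver it: sup-norm convergence controls neither total variation nor quadratic variation, and the decomposition $Q^\e=A^\e+M^\e$ is not even unique. But you then leave this step unproven outside concrete models, so as a proof of the lemma as stated your attempt is incomplete, by your own admission. For comparison, the paper does not prove this step either: it asserts $\lim_{m\to\infty}T_m=\infty$ a.s.\ already when defining $T_m$ (which is immediate for each \emph{fixed} $\e$, since any semimartingale decomposition has a.s.\ finite variation and finite quadratic variation on compacts), and its proof of Lemma~\ref{lem:3.2} is silent about uniformity in $\e$ for this part. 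So here you have identified a genuine gap in the paper rather than filled it; closing it requires either an extra hypothesis ($\e$-uniform control of $TV(A^\e)$ or of $[M^\e,M^\e]$, which holds in all of the paper's examples, where $Q^\e\equiv Q$ or the decompositions converge) or a restatement with uniformity in $n$ only.
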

\begin{proof}
Noticing \eqref{sup-Q}, we have by the same argument as in the proof of Lemma 3.2 by Long {\it et al.} \cite{lss13} that 
\[
\sup_{n\in \N,\e>0}|Y_t^{n,\e}| \le \sqrt{2}\l(|x| + \sup_{s\in [0,t]}|Q_s| + t\r) e^{Ct^2} < \infty\quad a.s.
\]
for any $t>0$. Therefore we have the consequence. 
\end{proof}

\begin{lemma}\label{lem:3.3}
Let $g\in C^{1,1}_\uparrow(\R^d\times\Theta;\R)$. Suppose A1, A2, Q1[$\g$], and that 
\[
\d_{n,\e}\D_n^{-1}\to \infty,\quad \e\,\D_n^{\g}\,\d_{n,\e}^{-1} =O(1),\quad n\e\to \infty. 
\]
Then, we have 
\begin{align}
\frac{1}{n}\sum_{k=1}^n g_{k-1}(\th)\filter  \toP \int_0^1 g(X_t^0,\th)\,\df t, \label{lem:3.3-1}
\end{align}
uniformly in $\th\in\Theta$. 
In addition, suppose that Q2[$q$] holds for some $q>0$, then 
\begin{align}
\sup_{n\in\N,\e>0}\E\l[\l(\e^{-1}\sup_{\th\in \Theta}\l|\frac{1}{n}\sum_{k=1}^n g_{k-1}(\th)\filter - \int_0^1 g(X_t^0,\th)\,\df t\r|\r)^q\r]<\infty.  \label{lem:3.3-2}
\end{align}
\end{lemma}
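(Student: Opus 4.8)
The plan is to split the filtered average into the full (unfiltered) Riemann sum minus the part removed by the filter, and to handle the two pieces with Lemmas~\ref{lem:3.1}--\ref{lem:3.2}. Set $U_n(\th):=\frac1n\sum_{k=1}^n g_{k-1}(\th)-\int_0^1 g(X_t^0,\th)\,\df t$ and $V_n(\th):=\frac1n\sum_{k=1}^n g_{k-1}(\th)\I_{\{|\DX|>\d_{n,\e}\}}$; since $\filter=1-\I_{\{|\DX|>\d_{n,\e}\}}$, the error controlled in \eqref{lem:3.3-1}--\eqref{lem:3.3-2} is exactly $U_n(\th)-V_n(\th)$. For $U_n$ I would use that $\frac1n\sum_{k=1}^n g_{k-1}(\th)=\int_0^1 g(Y_t^{n,\e},\th)\,\df t$ by the definition of the discretized process, so that $U_n(\th)=\int_0^1\{g(Y_t^{n,\e},\th)-g(X_t^0,\th)\}\,\df t$ and, since $g\in C^{1,1}_\uparrow$, a mean-value estimate gives $\sup_\th|U_n(\th)|\lesssim(1+\|Y^{n,\e}\|_*+\|X^0\|_*)^C\|Y^{n,\e}-X^0\|_*$, the polynomial factor being uniform in $\th$ by the definition of $C^{1,1}_\uparrow$. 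The uniform-in-$\th$ convergence \eqref{lem:3.3-1} for this term then follows from $\|Y^{n,\e}-X^0\|_*\toP0$ (Lemma~\ref{lem:3.1}) after localizing by $\tau_m^{n,\e}$ (Lemma~\ref{lem:3.2}); the moment bound \eqref{lem:3.3-2} follows from Corollary~\ref{cor:lem:3.1} --- applicable because \eqref{d-1} forces $n\e\to\infty$ (Remark~\ref{rem:ne}) --- combined with H\"older's inequality to separate the polynomial factor, whose moments are finite via \eqref{sup-Q}, the Burkholder--Davis--Gundy inequality and Q2, from $\e^{-1}\|Y^{n,\e}-X^0\|_*$.

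The substance is in $V_n$, the filtered-out part, whose analysis rests on a negligibility bound for $p_k:=\P(|\DX|>\d_{n,\e}\mid\F_{\t})$. Writing $\DX=\int_{\t}^{\T}b(X_s,\th_0)\,\df s+\e(Q^\e_{\T}-Q^\e_{\t})$ and localizing, the drift contributes $\lesssim\D_n=o(\d_{n,\e})$ because $\d_{n,\e}\D_n^{-1}\to\infty$, so $\{|\DX|>\d_{n,\e}\}$ forces $|Q^\e_{\T}-Q^\e_{\t}|>\d_{n,\e}/(2\e)$; since \eqref{d-1} yields $\d_{n,\e}/\e\gtrsim\D_n^\g$ and $\|Q^\e-Q\|_*\to0$, this event is, up to the uniform discrepancy between $Q^\e$ and $Q$, contained in $\{\sup_{t\in(\t,\T]}|Q_t-Q_{\t}|>\D_n^\g\}$, whence $p_k=o_p(1)$ by Q1[$\g$]; this is the estimate \eqref{Q1} mentioned in the remark. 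For \eqref{lem:3.3-1} I would localize so that $\sup_\th|g_{k-1}(\th)|\le C_m$ and observe that $\sup_k p_k=o_p(1)$ together with $p_k\le1$ and bounded convergence give $\frac1n\sum_{k=1}^n\I_{\{|\DX|>\d_{n,\e}\}}\toP0$, hence $\sup_\th|V_n(\th)|\toP0$.

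For the moment bound \eqref{lem:3.3-2} the crude estimate $p_k\le1$ is hopeless, and the key is that large increments are \emph{rare}. Writing $N_n:=\sum_{k=1}^n\I_{\{|\DX|>\d_{n,\e}\}}=nW_n$ and assuming (by Jensen if necessary) the order to be $\ge2$, I would bound $p_k$ by a conditional Markov inequality of order $M$, $p_k\lesssim(\e/\d_{n,\e})^{M}\E[|Q_{\T}-Q_{\t}|^{M}\mid\F_{\t}]$ up to the negligible drift and $Q^\e$-versus-$Q$ corrections, so that the compensator has mean $\mu_n:=\E\sum_k p_k\lesssim(\e/\d_{n,\e})^{M}\E\big[\sum_k|Q_{\T}-Q_{\t}|^{M}\big]$; splitting $Q=A+M$ as in \eqref{doob}, the realized power variation is $O(1)$ in $L^1$ because $\sum_k|A_{\T}-A_{\t}|^{M}\le TV(A)^{M}$, while the local-martingale part satisfies $\sum_k|M_{\T}-M_{\t}|^{M}\le\big(\sum_k|M_{\T}-M_{\t}|^2\big)^{M/2}$ with $\sum_k|M_{\T}-M_{\t}|^2\to[M,M]_1$, both integrable under Q2[$M$] by Burkholder--Davis--Gundy, giving $\mu_n\lesssim(\e/\d_{n,\e})^{M}$. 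Decomposing $N_n=\sum_k p_k+(\text{discrete martingale})$ and applying a Rosenthal/Burkholder--Davis--Gundy inequality for discrete martingales, once $\mu_n\to0$ one obtains the rare-event bound $\E[N_n^{M}]\lesssim\mu_n$, whence $\E[(\e^{-1}W_n)^{M}]=\e^{-M}n^{-M}\E[N_n^{M}]\lesssim\e^{-M}n^{-M}(\e/\d_{n,\e})^{M}=(n\d_{n,\e})^{-M}$, which is bounded because $n\d_{n,\e}\to\infty$; a last H\"older step restores the polynomial factor $(1+\|X\|_*)^C$, whose moments are finite via \eqref{sup-Q} and Q2. I expect the rare-event moment estimate $\E[N_n^{M}]\lesssim\mu_n$ to be the main obstacle: for a genuine (non-L\'evy) semimartingale the increments are dependent, so the Poisson-type first-moment domination cannot be read off from independence and must instead be extracted from the conditional probabilities $p_k$ through the predictable compensator; a secondary technical point is keeping the constant in $\d_{n,\e}/\e\gtrsim\D_n^\g$ compatible with the threshold $\D_n^\g$ appearing in Q1[$\g$].
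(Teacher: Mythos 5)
Your treatment of \eqref{lem:3.3-1} is essentially the paper's own proof: the same splitting into the unfiltered Riemann-sum error (handled through $Y^{n,\e}$, a mean-value bound and Lemma \ref{lem:3.1}) plus the filtered-out sum, and the same key negligibility estimate $\P(|\DX|>\d_{n,\e}\,|\,\F_\t)=o_p(1)$ from Q1[$\g$], which is exactly \eqref{Q1}; that the paper finishes with Lemma 9 of Genon-Catalot and Jacod while you use dominated convergence plus Markov's inequality is immaterial.

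The genuine gap is in your argument for \eqref{lem:3.3-2}, and it sits exactly at the step you flagged. First, your bound $\mu_n\lesssim(\e/\d_{n,\e})^M$ relies on $\sum_k|M_\T-M_\t|^M\le\bigl(\sum_k|M_\T-M_\t|^2\bigr)^{M/2}$ for the local-martingale part of $Q$, which holds only for $M\ge2$; for $M<2$ the inequality reverses, and you cannot ``assume $M\ge 2$ by Jensen'' because Q2[$M$] is hypothesized only for the stated $M$ and raising the exponent requires moments you do not have. This is not cosmetic: for symmetric $\a$-stable $Q$, Q2[$M$] forces $M<\a<2$, and then $\sum_k\E|M_\T-M_\t|^M\asymp n\,\D_n^{M/\a}=n^{1-M/\a}\to\infty$, so the compensator is not $O((\e/\d_{n,\e})^M)$. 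Second, the rare-event estimate $\E[N_n^M]\lesssim\mu_n$ cannot hold when $\mu_n=\E[N_n]\to\infty$ (Jensen gives $\E[N_n^M]\ge\mu_n^M$ for $M\ge1$), and \eqref{d-1} does not prevent this: it only imposes $\d_{n,\e}\gtrsim\e\D_n^\g$, so $\e/\d_{n,\e}$ may diverge, e.g.\ $\d_{n,\e}=\e\D_n^\g$. Even granting $\mu_n\to0$, Burkholder--Rosenthal reduces $\E[N_n^M]$ to $\E[(\sum_k p_k)^M]+\mu_n$, and controlling the $M$-th moment of the random compensator through your conditional Markov bound needs moments of $Q$ of order roughly $M^2$, again beyond Q2[$M$].

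To see that this is not repairable within your scheme, take $Q$ symmetric $\a$-stable, $1\le M<\a$, $\g\in(0,\a^{-1})$, $b\equiv 0$, $g\equiv 1$, $\d_{n,\e}=\e\D_n^\g$ and $\e=n^{\g-1}\log n$: all hypotheses of the lemma hold, yet $N_n$ is binomial with $\E[N_n]\asymp n^{\g\a}\to\infty$, whence $\E[(\e^{-1}n^{-1}N_n)^M]\ge(\e^{-1}n^{-1}\E N_n)^M\asymp(n^{\g(\a-1)}/\log n)^M\to\infty$, so your claimed chain of estimates (and indeed the asserted uniform bound itself, in this regime) breaks. A further, secondary problem is that your ``negligible corrections'' are not negligible at the $L^M$ level: the drift-overshoot event $\{\D_n(1+\|X\|_*)>\d_{n,\e}/2\}$ contributes $\e^{-M}\P(1+\|X\|_*\gtrsim\d_{n,\e}/\D_n)$, which Markov of order $M$ bounds only by $(\D_n/(\e\d_{n,\e}))^M\,\E(1+\|X\|_*)^M$, a quantity not bounded under \eqref{d-1}; and \eqref{Q} is an almost-sure statement carrying no moment control on $\|Q^\e-Q\|_*$, so that discrepancy cannot be discarded inside an expectation. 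For comparison, be aware that the paper does not actually prove this half either: its proof of \eqref{lem:3.3-2} is a one-sentence appeal to \eqref{lem:3.3-3} and Corollary \ref{cor:lem:3.1}, which addresses only the unfiltered term and never confronts the moments of $\e^{-1}n^{-1}\sum_k(1+|X_\t|)^C\I_{\{|\DX|>\d_{n,\e}\}}$ --- precisely the term your (correctly identified, but unresolved) ``main obstacle'' concerns.
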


\begin{proof}
Since $|g(x)|\lesssim (1 + |x|)^C$, we have that 
\begin{align*}
\sup_{\th\in\Theta}&\l|\frac{1}{n}\sum_{k=1}^n g_{k-1}(\th)\filter  - \int_0^1 g(X_t^0,\th)\,\df t\r| \\
&\lesssim \sup_{\th\in\Theta}\l|\frac{1}{n}\sum_{k=1}^n g_{k-1}(\th)  - \int_0^1 g(X_t^0,\th)\,\df t\r| 
+ \frac{1}{n}\sum_{k=1}^n (1 + |X_\t|)^C \I_{\{|\DX|>\d_{n,\e}\}} \\
&= \sup_{\th\in\Theta}\l|\int_0^1 \l\{g(Y_{t}^{n,\e})\,\ - g(X_t^0,\th)\r\}\,\df t\r| 
+ \frac{1}{n}\sum_{k=1}^n (1 + |X_\t|)^C \I_{\{|\DX|>\d_{n,\e}\}}. 
\end{align*}
For the first term on the right-hand side of the inequality, it holds that
\begin{align}
\sup_{\th\in\Theta}&\l|\int_0^1 \l\{g(Y_{t}^{n,\e})\,\ - g(X_t^0,\th)\r\}\,\df t\r| \\
&\le \sup_{\th\in \Theta}\int_0^1 \int_0^1 \l|\n_x g(X_s^0 + u(Y_s^{n,\e}- X_s^0),\th)\r|\cdot\l|Y_s^{n,\e}- X_s^0\r| \,\df u\df s \notag \\
&\lesssim \int_0^1 \l(1 + |X_s^0| + |Y_s^{n,\e}|^C\r)\l|Y_s^{n,\e}- X_s^0\r| \,\df s \notag \\
&\lesssim \l(1 + \sup_{t\in [0,1]}|X_t^0| + \sup_{t\in [0,1]}|X_t^0|\r)^C \sup_{t\in [0,1]}\l|Y_t^{n,\e} - X_t^0\r| \toP 0, \label{lem:3.3-3}
\end{align}
by Lemma \ref{lem:3.1}, \eqref{lem:3.1-1}. 
Hence the proof ends if we show the second term tends to zero in probability. 
Let 
\[
\xi_{k,n,\e}(\th):=\frac{1}{n}(1 + |X_\t|)^C\I_{\{|\DX|>\d_{n,\e}\}}. 
\]
We show that 
\begin{align}
\sum_{k=1}^n \E\l[\xi_{k,n,\e}(\th)|\F_\t\r]&\toP 0; \label{cond1}\\
\sum_{k=1}^n \E\l[|\xi_{k,n,\e}(\th)|^2|\F_\t\r]&\toP 0. \label{cond2}
\end{align}
which implies that $\sum_{k=1}^n \xi_{k,n,\e}(\th)\toP 0$ for each $\th\in\Theta$ from Lemma 9 by Genon-Catalot and Jacod \cite{gj93}. 
First, we show that \eqref{cond1}. Note that 
\begin{align*}
\sum_{k=1}^n \E\l[\xi_{k,n,\e}(\th)|\F_\t\r]&= \frac{1}{n}\sum_{k=1}^n (1 + |X_\t|)^C\P\l(|\DX|>\d_{n,\e}|\F_\t\r)
\end{align*}
Since it follows that $n^{-1}\sum_{k=1}^n (1 + |X_\t|)^C=O_p(1)$ by Lemma 3.3 in \cite{lss13}, it suffices to show that 
\[
\P\l(|\DX|>\d_{n,\e}|\F_\t\r) = o_p(1), 
\]
for any $k=1,\dots,n$. Note that it holds that 
\[
\sup_{t\in (\t,\T]}|X_t - X_\t| \lesssim \D_n(1 + \|X\|_*) + \e \sup_{t\in (\t,\T]}|Q_t^\e|. 
\]
Hence it follows from Q1[$\g$] and the assumption on $\d_{n,\e}$ that, for $n$ large enough, 
\begin{align}
\P\l(|\DX|>\d_{n,\e}|\F_\t\r) 
&\le \P\l(\sup_{t\in (\t,\T]}|Q_t-Q_\t| > \frac{\d_{n,\e}}{2\e} \bigg|\F_\t\r) \notag \\
&\quad +  \P\l((1 + \|X\|_*) > \frac{\d_{n,\e}}{2\D_n} \bigg|\F_\t\r) \notag \\ 
&\le \P\l(\sup_{t\in (\t,\T]}|Q_t-Q_\t| \gtrsim \D_n^\g \big|\F_\t\r) \notag \\
&\quad  +  \P\l((1 + \|X\|_*) \gtrsim \D_n^{-1}\d_{n,\e}\big|\F_\t\r) \notag \\ 
&= o_p(1), \label{Q1}
\end{align}
since $\|X\|_*$ is bounded in probability. 
This is the proof of \eqref{cond1}. The proof of \eqref{cond2} is similar to above, which ends the proof of \eqref{lem:3.3-1}. 
The proof of \eqref{lem:3.3-2} is easy from the estimates \eqref{lem:3.3-3} and Corollary \ref{cor:lem:3.1} since we are assuming that $n\e\to \infty$, 
so we omit the details. Then the proof is completed. 
\end{proof}

The next lemma is a version of the {\it Toeplitz lemma}; see also, e.g., Shimizu \cite{s12}. 
We need this result in the proof of the next Lemma \ref{lem:3.4}. 

\begin{lemma}\label{lem:t}
Let $\{a_k^n\}_{k=1}^n$ be a positive bounded sequence, and put $b_n:=\sum_{k=1}^n a_k^n$. Suppose that a sequence $\{x_k^n\}_{k=1}^n$ 
satisfies the following conditions: 
\begin{align}
&\sup_{n\in\N}|x_k^n|<\infty \quad \mbox{for each fixed }k; \label{t1}\\
&\lim_{m\to \infty}\sup_{k,n:\,m\le k\le n}|x_k^n - x|= 0\quad \mbox{for some $x\in\R$}; \label{t2}
\end{align}
Then, for any sequence $A_n$ with $A_n\sim b_n^{-1}$, $A_n\sum a_i^nx_i^n \to x$ as $n\to\infty$. 
\end{lemma}

\begin{lemma}\label{lem:3.4}
Let $g\in C^{1,1}_\uparrow(\R^d\times\Theta;\R^d)$. Under A1, it holds that 
\[
\sum_{k=1}^n g_{k-1}(\th)[\D_k^n Q^\e] \toP \int_0^1 g(X_t^0,\th)\,[\df Q_t]. 
\]
\end{lemma}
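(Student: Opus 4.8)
The plan is to recognise the Riemann--Stieltjes sum on the left-hand side as a stochastic integral against $Q^\e$ of a piecewise-constant (càglàd) integrand, and then to separate the two sources of error: the discretisation of the integrand and the replacement of the integrator $Q^\e$ by its limit $Q$. Writing $Y^{n,\e}_t=X_{\lfloor nt\rfloor/n}$ for the discretised process, the partition gives $g(Y^{n,\e}_{s-},\th)=g_{k-1}(\th)$ on $(\t,\T]$, so that
\[
\sum_{k=1}^n g_{k-1}(\th)[\D_k^n Q^\e] = \int_0^1 g(Y^{n,\e}_{s-},\th)[\df Q^\e_s].
\]
I would then decompose the difference from the target as $I_1+I_2$, where
\[
I_1:=\int_0^1 \l\{g(Y^{n,\e}_{s-},\th)-g(X^0_s,\th)\r\}[\df Q^\e_s],\qquad I_2:=\int_0^1 g(X^0_s,\th)[\df(Q^\e_s-Q_s)],
\]
and prove each tends to $0$ in probability for fixed $\th$.

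For $I_2$ the crucial observation is that $X^0$, being the solution of the ODE $\df X^0_t=b(X^0_t,\th_0)\,\df t$, is continuously differentiable in $t$; hence, since $g\in C^{1,1}_\uparrow$, the process $t\mapsto g(X^0_t,\th)$ is of finite variation with total variation bounded in probability. The integrator $R^\e:=Q^\e-Q$ is a semimartingale with $R^\e_0=0$ and $\|R^\e\|_*\toP 0$ by \eqref{Q}. Integration by parts, the bracket term vanishing because $g(X^0_\cdot,\th)$ is continuous of finite variation, then yields
\[
I_2 = g(X^0_1,\th)^\top R^\e_1 - \int_0^1 (R^\e_s)^\top \,\df g(X^0_s,\th),
\]
and both terms are dominated by $\|R^\e\|_*$ times a factor bounded in probability, so $I_2\toP 0$.

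For $I_1$ I would localise by the stopping times $\tau_m^{n,\e}$ of Lemma \ref{lem:3.2}, on which $\|X^0\|_*\vee\|Y^{n,\e}\|_*\le m$, the total variation of $A^\e$ is at most $m$, and $[M^\e,M^\e]\le m$. Splitting $Q^\e=A^\e+M^\e$, the finite-variation contribution is bounded by $\sup_s|g(Y^{n,\e}_{s-},\th)-g(X^0_s,\th)|\cdot \int_0^1|\df A^\e_s|$, while the martingale contribution is controlled, via the Burkholder--Davis--Gundy inequality, by $\E\int_0^1 |g(Y^{n,\e}_{s-},\th)-g(X^0_s,\th)|^2\,\df[M^\e,M^\e]_s$. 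By Lemma \ref{lem:3.1} we have $\|Y^{n,\e}-X^0\|_*\toP 0$, and on $\{\tau_m^{n,\e}\ge 1\}$ the local Lipschitz and polynomial-growth bounds from $g\in C^{1,1}_\uparrow$ make the integrand differences uniformly bounded and convergent to $0$; bounded convergence then forces both contributions to vanish on the localised event. Letting $m\to\infty$ and using $\tau_m^{n,\e}\to\infty$ a.s. uniformly in $n,\e$ (Lemma \ref{lem:3.2}) removes the localisation and gives $I_1\toP 0$.

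The main obstacle is that the integrator $Q^\e$ itself varies with $\e$, so one cannot simply invoke the continuity of stochastic integration for a fixed integrator, and the Doob--Meyer components $A^\e,M^\e$ are not assumed to converge to $A,M$. The device that circumvents this is the finite-variation regularity of the deterministic limit path $X^0$, which turns the integrator-swap term $I_2$ into an ordinary integration-by-parts estimate requiring only the uniform smallness $\|Q^\e-Q\|_*\toP 0$, while the localisation confines the genuinely stochastic estimates in $I_1$ to bounded total variation and bounded quadratic variation.
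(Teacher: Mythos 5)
Your proof is correct, and it shares the paper's overall skeleton: rewrite the sum as $\int_0^1 g(Y^{n,\e}_{s-},\th)[\df Q^\e_s]$, split the error into an integrator-replacement part ($Q^\e\to Q$) and an integrand-replacement part ($g(Y^{n,\e})\to g(X^0)$), then localize with $\tau_m^{n,\e}$ and use Burkholder--Davis--Gundy on the martingale contribution. But you perform the two replacements in the opposite order, and this is a substantive difference. The paper decomposes as $\int_0^1 g(Y_t^{n,\e},\th)[\df Q_t^\e - \df Q_t]$ plus $\int_0^1 \l\{g(Y_t^{n,\e},\th)-g(X_t^0,\th)\r\}[\df Q_t]$, so its integrator-swap term carries the rough, piecewise-constant integrand; the paper then bounds that term by a $C_m\,\|Q^\e-Q\|_*$-type estimate, a step which, as written, glosses over the fact that $\sum_{k} g_{k-1}(\th)[\D_k^n(Q^\e-Q)]$ accumulates $n$ increments (a naive bound gives a factor $n$, and an Abel summation brings in the discrete variation of $g(X_\t,\th)$ along the partition, which is not bounded in $n$). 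Your mirrored decomposition instead puts the smooth deterministic integrand $g(X^0_\cdot,\th)$ into the integrator-swap term $I_2$, so a pathwise integration by parts disposes of it using only $\|Q^\e-Q\|_*\to 0$ and the finite total variation of $t\mapsto g(X^0_t,\th)$; this cleanly avoids the issue. A second point in your favor: your integrand-swap term $I_1$ integrates against $Q^\e=A^\e+M^\e$, whose components are exactly those controlled by $\tau_m^{n,\e}$ (through $T_m$), whereas the paper integrates against the limit decomposition $Q=A+M$, which $\tau_m^{n,\e}$ does not bound, so its dominated-convergence step tacitly relies on separate a.s. finiteness (indeed integrability) of $[M,M]_1$. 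One thing you should state explicitly: for $I_1$ and your middle terms to make sense as stochastic integrals, $Q^\e$ itself must be a semimartingale; this is not part of condition \eqref{Q} but is assumed in the paper's preliminaries via the decomposition $Q^\e=A^\e+M^\e$, so your proof inherits exactly the same hypothesis (your $I_2$, by contrast, is defined pathwise and needs nothing of the sort). Also, in your BDG display the quantity controlling the martingale part should be $\E\bigl[\bigl(\int_0^1 |g(Y^{n,\e}_{s-},\th)-g(X^0_s,\th)|^2\,\df[M^\e,M^\e]_s\bigr)^{1/2}\bigr]$, i.e.\ with the square root inside the expectation; your subsequent bounded-convergence argument goes through verbatim with this correction.
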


\begin{proof}
Since we are assuming the uniform convergence \eqref{Q}, we have that 
\begin{align*}
&\l|\sum_{k=1}^n g_{k-1}(\th)[\D_k^n Q^\e] - \int_0^1 g(X_t^0,\th)\,[\df Q_t]\r| \\
&=  \l| \sum_{k=1}^n g_{k-1}(\th)[\D_k^n Q^\e - \D_k^n Q]\r| + \l| \int_0^1 \l\{g(Y_t^{n,\e},\th) - g(X_t^0,\th)\r\}\,[\df Q_t]\r| \\
&\le \l|\sum_{k=1}^n g_{k-1}(\th)[\D_k^n Q^\e - \D_k^n Q]\r|
+ \int_0^1 \l|g(Y_t^{n,\e},\th) - g(X_t^0,\th)\r|\,[|\df A_t|]\\
&\quad  + \l|\int_0^1 \l\{g(Y_t^{n,\e},\th) - g(X_t^0,\th)\r\}\,[\df M_t]\r| \\
&=: I_{n,\e}^{(1)} + I_{n,\e}^{(2)} + I_{n,\e}^{(3)}. 
\end{align*}
As for $I_{n,\e}^{(1)}$, we suppose $d=1$ for simplicity of notation. Putting $a_k^n := g_k(\th) - g_{k-1}(\th)$ and $x_k^n:= Q^\e_{t_k} - Q_{t_k}$ (since $\e$ depends on $n$), we have that 
\begin{align*}
I_{n,\e}^{(1)} &= \sum_{k=1}^n \l\{ g_k(\th)[Q^\e_{t_k} - Q_{t_k}] - g_{k-1}(\th)[Q^\e_{t_{k-1}} - Q_{t_{k-1}}]\r\} \\
&\quad  - \sum_{k=1}^n \l\{g_k(\th) - g_{k-1}(\th)\r\}[Q^\e_{t_k} - Q_{t_k}] \\
&= g(X_1^\e)[Q^\e_1 - Q_1] - \sum_{k=1}^n a_k^n \cdot x_k^n.  
\end{align*}
Now, it is clear that $g(X_1^\e)[Q^\e_1 - Q_1]\to 0\ a.s.$ by the assumption \eqref{Q}. Moreover, the sum $\sum_{k=1}^n a_k^n \cdot x_k^n$ 
also converges to zero with probability one by using Lemma \ref{lem:t}. 
Indeed, the convergence \eqref{t1} is clear from \eqref{Q}, and that 
\[
\lim_{k\to \infty}\sup_{j,n:\,k\le j\le n}|x_j^n| \lesssim \lim_{\e\to 0}\|Q^\e-Q\|_* = 0\quad a.s.
\]
Moreover, as $\e\to 0$, 
\[
b_n=\sum_{k=1}^n a_k^n =\sum_{k=1}^n \l[g_k(\th) - g_{k-1}(\th)\r] \to g(X_1^0,\th) - g(X_0^0,\th) < \infty\quad a.s.\quad 
\]
Hence we see by the above Toeplitz lemma that $\sum_{k=1}^n a_k^n \cdot x_k^n\to 0$ with probability one. 
Therefore we have that $I_{n,\e}^{(1)}\to 0$ with probability one.

As for $I_{n,\e}^{(2)}$, it follows from Lemma \ref{lem:3.1}, \eqref{lem:3.1-1} that 
\begin{align*}
I_{n,\e}^{(2)} &\lesssim  (1 + \sup_{t\in [0,1]}|X_t^0| + \sup_{t\in [0,1]}|X_t|)^C \sup_{t\in [0,1]}\l|Y_t^{n,\e} - X_t^0\r|\toP 0. 
\end{align*}

As for $I_{n,\e}^{(3)}$, using Markov's, and the Burkholder-Davis-Gundy inequalities, we have for any $\eta>0$ that 
\begin{align*}
\P(|I_{n,\e}^{(3)}| > \eta) &\le \P(\tau_m^{n,\e} < 1) + \P\l(\l|\int_0^1 \l\{g(Y_t^{n,\e},\th) - g(X_t^0,\th)\r\}\I_{\{t\le \tau_m^{n,\e}\}}\,[\df M_t] \r|>\eta/2\r) \\
&\le \P(\tau_m^{n,\e} < 1) + 2\eta^{-1} \E\l|\int_0^1 \l\{g(Y_t^{n,\e},\th) - g(X_t^0,\th)\r\}\I_{\{t\le \tau_m^{n,\e}\}}\,[\df M_t]\r| \\
&\lesssim \P(\tau_m^{n,\e} < 1) + \eta^{-1} \E\l|\int_0^1 \l|g(Y_t^{n,\e},\th) - g(X_t^0,\th)\r|^2\I_{\{t\le \tau_m^{n,\e}\}}\,\df [M,M]_t\r|^{1/2}. 
\end{align*}
From the definition of $\tau_m^{n,\e}$, the integrand in the last term is bounded in $n$ and $\e$. 
Hence, taking the limit $n\to \infty$ and $\e\to 0$, we see from the dominated convergence theorem that $\P(|I_{n,\e}^{(3)}| > \eta)\to 0$, which completes the proof. 
\end{proof}

\begin{lemma}\label{lem:3.5}
Let $g\in C^{1,1}_\uparrow(\R^d\times\Theta;\R^d)$. Assume A1, A2, Q1[$\g$], and that 
\[
\d_{n,\e}\D_n^{-1}\to \infty,\quad \e\,\D_n^{\g}\,\d_{n,\e}^{-1} =O(1),\quad n\e\to \infty. 
\]
Then we have 
\begin{align}
\sum_{k=1}^n g_{k-1}(\th)[\chi_k(\th_0)] \filter  \toP 0, \label{lem:3.5-1}
\end{align}
uniformly in $\th\in\Theta$. In addition, assume Q2[$q$] for any $q>p=\dim (\Theta)$. Then 
\begin{align}
\E\l[\l(\e^{-1}\sup_{\th\in \Theta}\l|\sum_{k=1}^n g_{k-1}(\th)[\chi_k(\th_0)]  \filter\r|\r)^q\r]<\infty.  \label{lem:3.5-2}
\end{align}
\end{lemma}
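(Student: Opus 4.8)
The plan is to expose the martingale structure hidden in $\chi_k(\th_0)$ and to reduce everything to the machinery already developed for Lemma \ref{lem:3.3} and Lemma \ref{lem:3.4}. Writing the model increment as $\DX = \int_{\t}^{\T} b(X_s^\e,\th_0)\,\df s + \e\,\D_k^n Q^\e$, I would split
\begin{align*}
\chi_k(\th_0) = r_k + \e\,\D_k^n Q^\e,\qquad r_k := \int_{\t}^{\T}\l\{b(X_s^\e,\th_0) - b_{k-1}(\th_0)\r\}\,\df s,
\end{align*}
so that $r_k$ is a pure drift-discretization remainder and $\e\,\D_k^n Q^\e$ carries the noise. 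The remainder sum $\sum_k g_{k-1}(\th)[r_k]\filter$ is negligible: by A1 one has $|r_k|\lesssim\D_n\sup_{s\in(\t,\T]}|X_s^\e - X_\t|$, and the weighted sum then tends to $0$ in probability by the Lipschitz estimate A1 and the modulus-of-continuity control underlying Lemma \ref{lem:3.1}, exactly as for the first term in \eqref{lem:3.3-3}. The real work is the noise sum.

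For the noise part I would apply the Genon-Catalot--Jacod lemma (Lemma 9 of \cite{gj93}, as in the proof of Lemma \ref{lem:3.3}) to the summands $\xi_k(\th):=g_{k-1}(\th)[\chi_k(\th_0)]\filter$. The decisive gain from the filter is that $\xi_k(\th)$ is \emph{bounded}: on $\{|\DX|\le\d_{n,\e}\}$ one has $|\chi_k(\th_0)|\le\d_{n,\e} + C(1+|X_\t|)^C\D_n$, so all conditional moments of $\xi_k$ exist even with no integrability assumption on $Q$. After localizing by $\tau_m^{n,\e}$ (Lemma \ref{lem:3.2}) I would verify $\sum_k\E[\xi_k(\th)|\F_\t]\toP0$ and $\sum_k\E[|\xi_k(\th)|^2|\F_\t]\toP0$. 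For the second condition the key is \emph{not} to use the coarse pathwise bound $|\chi_k|^2\filter\le\d_{n,\e}^2$ (whose sum is of order $n\d_{n,\e}^2$ and need not vanish), but instead to bound $\filter\le1$ and invoke the genuine quadratic variation: on $\{t\le\tau_m^{n,\e}\}$ one gets $\sum_k\E[|\D_k^n Q^\e|^2|\F_\t]\lesssim[M^\e,M^\e]_1 + TV(A^\e)^2\le m + m^2$, so the whole sum is $O\l(\e^2(m+m^2)\r)\to0$. For the conditional mean, the unfiltered part produces the drift-remainder sum together with $\e\sum_k g_{k-1}(\th)[\E(\D_k^n A^\e|\F_\t)]$ (both vanishing, the latter through the factor $\e$), while the filter-removal correction is controlled through the negligibility $\P(|\DX|>\d_{n,\e}|\F_\t)=o_p(1)$ of \eqref{Q1}, the martingale contribution being estimated by Cauchy--Schwarz against the localized quadratic variation. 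This gives \eqref{lem:3.5-1} for fixed $\th$; uniformity in $\th$ then follows from the $C^{1,1}_\uparrow$-regularity of $g$, which renders the family equicontinuous on the compact set $\Theta$.

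For the moment bound \eqref{lem:3.5-2} I would first remove the $\sup_\th$ by the Sobolev inequality on the bounded convex set $\Theta\subset\R^p$ (valid since $M>p$): it suffices to bound $\E[|F_{n,\e}(\th)|^M]$ and $\E[|\n_\th F_{n,\e}(\th)|^M]$ uniformly in $(n,\e,\th)$, where $F_{n,\e}(\th):=\e^{-1}\sum_k g_{k-1}(\th)[\chi_k(\th_0)]\filter$ and its $\th$-gradient is a sum of the same type with $\n_\th g\in C^{1,1}_\uparrow$. Each such sum I would split along $\chi_k=r_k+\e\,\D_k^n Q^\e$ and $Q^\e=A^\e+M^\e$: the drift remainder is bounded in $L^M$ by Corollary \ref{cor:lem:3.1}, the finite-variation part by $(1+\|X\|_*)^C\,TV(A^\e)$, and the martingale part (after writing $\filter=1-\I_{\{|\DX|>\d_{n,\e}\}}$) by the Burkholder--Davis--Gundy inequality applied to the transform $\sum_k g_{k-1}(\th)[\D_k^n M^\e]$, yielding a bound in terms of $(1+\|X\|_*)^C[M^\e,M^\e]_1^{M/2}$. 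All of these are finite uniformly in $(n,\e)$ under Q2[$M$] together with Lemma \ref{lem:3.1} and Corollary \ref{cor:lem:3.1}.

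The main obstacle is identical in both parts and stems from the filter $\filter=\I_{\{|\DX|\le\d_{n,\e}\}}$ depending on the \emph{future} increment $\DX$: this destroys the martingale-difference property that would otherwise make $\sum_k g_{k-1}(\th)[\e\,\D_k^n M^\e]$ a martingale transform amenable to a direct Burkholder--Davis--Gundy or martingale-limit argument. The delicate step is therefore the filter-removal correction $\sum_k g_{k-1}(\th)[\e\,\D_k^n M^\e]\I_{\{|\DX|>\d_{n,\e}\}}$ (and its $L^M$ analogue), where the crude pathwise estimates diverge and one must balance the negligibility of large increments supplied by Q1[$\g$] against the quadratic-variation and moment control furnished by localization and Q2[$M$].
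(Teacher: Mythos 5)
Your toolkit is the right one, and several of your observations are sound (the boundedness of $\chi_k(\th_0)\filter$ by $\d_{n,\e}+C(1+|X_\t|)^C\D_n$, and the warning that the coarse bound $n\d_{n,\e}^2$ need not vanish). But note that the paper's own proof is organized precisely so as never to need what your plan needs. For \eqref{lem:3.5-1} the paper does not run a conditional-moment (Genon-Catalot--Jacod) argument on the filtered summands at all: it invokes the proof of Lemma 3.5 of Long et al.\ \cite{lss13}, in which the noise contribution appears as $\e\sum_k g_{k-1}(\th)[\D_k^nQ^\e]$, i.e.\ an explicit factor $\e$ multiplying a quantity that is $O_p(1)$ by Lemma \ref{lem:3.4}; the complementary term carrying $\I_{\{|\DX|>\d_{n,\e}\}}$ is dealt with separately (Lemma \ref{lem:3.5-cor}) via the negligibility \eqref{Q1}. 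For \eqref{lem:3.5-2} the paper discards the filter at the outset and bounds everything by the \emph{unfiltered} quantities $I^{(1)}_{n,\e}$ (drift remainder, via Corollary \ref{cor:lem:3.1}), $I^{(2)}_{n,\e}$ and $I^{(3)}_{n,\e}=\sup_{\th}\l|\int_0^1 g(Y^{n,\e}_t,\th)[\df Q_t]\r|$, the last handled exactly as you propose (Sobolev inequality on $\Theta$ plus Burkholder--Davis--Gundy under Q2). So in part 2 you and the paper largely coincide; in part 1 you take a genuinely different route.

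The genuine gap is the step you yourself flag in your last paragraph and then leave unresolved: the filter-removal correction $\sum_k g_{k-1}(\th)[\e\,\D_k^nM^\e]\,\I_{\{|\DX|>\d_{n,\e}\}}$, which your route cannot avoid because applying GCJ (or BDG) to the \emph{filtered} summands forces you to take conditional moments of noise increments restricted to the event $\{|\DX|>\d_{n,\e}\}$. The "balancing" you gesture at does not close with the stated hypotheses: Q1[$\g$] supplies only a rate-free, per-increment negligibility $\P(|\DX|>\d_{n,\e}\,|\,\F_\t)=o_p(1)$, whereas the correction aggregates $n$ increments. Concretely, your Cauchy--Schwarz estimate for the conditional mean gives
\[
\sum_{k=1}^n \E\Bigl[\,|g_{k-1}(\th)|\,\e\,|\D_k^nM^\e|\,\I_{\{|\DX|>\d_{n,\e}\}}\,\Big|\,\F_\t\Bigr]
\le \max_k|g_{k-1}(\th)|\,\Bigl(n\e^2 \max_k \P\bigl(|\DX|>\d_{n,\e}\,\big|\,\F_\t\bigr)\Bigr)^{1/2}\Bigl(\sum_{k=1}^n\E\bigl[|\D_k^nM^\e|^2\,\big|\,\F_\t\bigr]\Bigr)^{1/2},
\]
and since the assumptions only force $n\e\to\infty$ (so $n\e^2$ may diverge) and Q1 attaches no rate to the $o_p(1)$, the right-hand side need not vanish; the same obstruction reappears, with nothing to integrate against, in the $L^M$ version. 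Naming the delicate step is not a substitute for an argument, and as sketched the step fails. The fix is the paper's: keep the factor $\e$ glued to the full (unfiltered) stochastic integral, which is $O_p(1)$ by Lemma \ref{lem:3.4} without any moment assumptions, and bound the moment statement by unfiltered integrals so that the martingale structure survives intact for BDG.
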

\begin{proof}
The proof of \eqref{lem:3.5-1} is similar to the one of Lemma 3.5 by Long {\it et al.} \cite{lss13} with a slight extension to semimartingale version; see also Remark 4.3 in \cite{lss13}. 
It is clear from their proof that the indicator $\filter$ is not essential to the proof. So it is omitted. 

As for \eqref{lem:3.5-2}, we note that 
\begin{align*}
&\e^{-1}\sup_{\th\in \Theta}\l|\sum_{k=1}^n g_{k-1}(\th)[\chi_k(\th_0)]  \filter\r| \\
&=\e^{-1}\sup_{\th\in \Theta} \Bigg|\sum_{k=1}^n\int_0^1 g(Y_s^{n,\e},\th)\l[b(X_s,\th_0) - b(Y_s^{n,\e},\th_0) + \e\cdot \df Q^\e_t\r]\,\df s \cdot \filter \Bigg| \\
&\le \e^{-1}\int_0^1  \sup_{\th\in \Theta} \l|g(Y_s^{n,\e},\th)\l[b(X_s,\th_0) - b(Y_s^{n,\e},\th_0) \r]\r|\,\df s  \\
&\quad + \sup_{\th\in \Theta} \l|\int_0^1 g(Y_s^{n,\e},\th) [\df Q^\e_t - \df Q_t]\r| + \sup_{\th\in \Theta} \l|\int_0^1 g(Y_s^{n,\e},\th) [\df Q_t]\r| \\
&=: I^{(1)}_{n,\e} + I^{(2)}_{n,\e} + I^{(3)}_{n,\e}. 
\end{align*}
By the assumption A1 and the condition for $g$, we have 
\begin{align*}
I^{(1)}_{n,\e} &\lesssim \e^{-1} \int_0^1 (1 + |Y_s^{n,\e}|)^\la|X_s - Y_s^{n,\e}|\,ds \\
&\lesssim \l(1 + \|Y^{n,\e}- X^0\|_*^\la + \|X^0\|_*^\la \r) \l(\e^{-1}\|X - X^0\| + \e^{-1}\|X^0 - Y^{n,\e}\|\r)
\end{align*}
Hence, under the assumption that $n\e\to \infty$, Corollary \ref{cor:lem:3.1} yields that 
\begin{align*}
\E|I^{(1)}_{n,\e}|^q < \infty.  
\end{align*}
We have already shown that $I^{(2)}_{n,\e}\toP 0$ in the proof of Lemma \ref{lem:3.4}. 
Hence the proof ends if we show that $I^{(3)}_{n,\e}\toP 0$. 
Noticing that a bounded convex set $\Theta$ admits the following {\it Sobolev inequality}; 
\[
\sup_{\th\in \Theta}|u(\th)| \lesssim \|u(\th)\|_{L^q(\Theta)} + \|\n_\th u(\th)\|_{L^q(\Theta)}, 
\]
for $q>p=\dim (\Theta)$, we see for any $q>p$ that 
\begin{align*}
I^{(3)}_{n,\e} &\le \int_0^1 (1 + |Y_s^{n,\e},\th|)^\la \cdot |\df A_s| +  \sup_{\th\in \Theta} \l|\int_0^1 g(Y_s^{n,\e},\th) [\df M]\r| \\
&\lesssim \l(1 + \|Y^{n,\e}- X^0\|_*^\la + \|X^0\|_*^\la \r) TV(A) + \l(\int_\Theta \l|\int_0^1 g(Y_s^{n,\e},\th) [\df M^\e]\r|^q\,\df \th\r)^{1/q}. 
\end{align*}
Then, by using the Burkholder-Davis-Gundy inequality, we have 
\begin{align*}
\E|I^{(3)}_{n,\e}|^q &\lesssim \E\l[\l(1 + \|Y^{n,\e}- X^0\|_*^{\la q} \r) TV(A)^q \r] + \int_\Theta \E\l|\int_0^1 g(Y_s^{n,\e},\th) [\df M]\r|^q\,\df \th \\
&\lesssim 1 + \E\l[\l(1 + \|Y^{n,\e}- X^0\|_*^\la + \|X^0\|_*^\la \r)^q |[M,M]_1|^{q/2}\r] < \infty, 
\end{align*}
under Q2[$q$] for any $q>p$. This completes the proof of \eqref{lem:3.5-2}. 
\end{proof}

\begin{lemma}\label{lem:3.5-cor}
Let $g\in C^{1,1}_\uparrow(\R^d\times\Theta;\R^d)$. Assume A1, A2, Q1[$\g$], and that 
\begin{align*}
\d_{n,\e}\D_n^{-1}\to \infty,\quad \e\,\D_n^{\g}\,\d_{n,\e}^{-1} =O(1),\quad n\e\to \infty. 
\end{align*}
Then we have 
\begin{align*}
\sum_{k=1}^n g_{k-1}(\th)[\chi_k(\th_0)] \I_{\{|\DX|> \d_{n,\e}\}} = o_p(\e), 
\end{align*}
for each $\th\in \Theta$. 
\end{lemma}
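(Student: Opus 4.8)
The plan is to control the complement of the filter $\filter$ appearing in Lemma \ref{lem:3.5}. Using the model equation \eqref{model}, I would first rewrite each summand through the decomposition
\[
\chi_k(\th_0) = \int_\t^\T \l(b(X_s,\th_0) - b_{k-1}(\th_0)\r)\,\df s + \e\cdot\D_k^n Q^\e =: R_k + \e\cdot\D_k^n Q^\e,
\]
and split the target into a drift-remainder part $\sum_{k=1}^n g_{k-1}(\th)[R_k]\I_{\{|\DX|>\d_{n,\e}\}}$ and a noise part $\e\sum_{k=1}^n g_{k-1}(\th)[\D_k^n Q^\e]\I_{\{|\DX|>\d_{n,\e}\}}$, treating each separately.

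For the drift-remainder part I would simply bound the indicator by $1$. By A1, $|R_k|\lesssim \D_n\sup_{s\in(\t,\T]}|X_s - X_\t|$, and the elementary estimate $\sup_{s\in(\t,\T]}|X_s - X_\t|\lesssim \D_n(1+\|X\|_*) + \e\sup_{s\in(\t,\T]}|Q^\e_s - Q^\e_\t|$ holds as in the proof of Lemma \ref{lem:3.3}. Summing against the polynomially bounded weights $g_{k-1}(\th)$, dividing by $\e$, and using $n\D_n = 1$, the first contribution is $O_p((n\e)^{-1})$ while the second is $O_p(\D_n^\g)$ once Q1[$\g$] is invoked. Since $n\e\to\infty$ under \eqref{d-1} (see Remark \ref{rem:ne}), both vanish, so this part is $o_p(\e)$; this is routine and mirrors the estimates already used in Lemmas \ref{lem:3.3} and \ref{lem:3.5}.

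Everything therefore reduces to proving $\sum_{k=1}^n g_{k-1}(\th)[\D_k^n Q^\e]\I_{\{|\DX|>\d_{n,\e}\}}\toP 0$. Here I would localize with the stopping times $\tau_m^{n,\e}$ of Lemma \ref{lem:3.2} to reduce to uniformly bounded coefficients, and then exploit that on $\{|\DX|>\d_{n,\e}\}$ the deterministic drift over a single mesh interval is $O(\D_n)=o(\d_{n,\e})$, because $\d_{n,\e}\D_n^{-1}\to\infty$; such an event thus forces a large noise increment, and by the conditional estimate \eqref{Q1} derived from Q1[$\g$] and \eqref{d-1} these events are conditionally negligible. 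Comparing with the convergent unfiltered sum $\sum_k g_{k-1}(\th)[\D_k^n Q^\e]\toP \int_0^1 g(X_t^0,\th)[\df Q_t]$ of Lemma \ref{lem:3.4}, the removed large-increment contribution should then vanish in probability.

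I expect this last step to be the main obstacle. The filter is $\F_\T$-measurable rather than predictable, so $\sum_k g_{k-1}(\th)[\D_k^n Q^\e]\I_{\{|\DX|>\d_{n,\e}\}}$ is not a martingale transform, and since the moment condition Q2 is \emph{not} assumed in this corollary one cannot bound it through its predictable bracket. Worse, one cannot pass to absolute values: in the infinite-variation regime $\sum_k |\D_k^n Q^\e|\I_{\{|\DX|>\d_{n,\e}\}}$ may diverge, so the argument must retain the signed, cancelling structure of the sum and lean on the u.c.p.\ convergence $Q^\e\to Q$ underlying Lemma \ref{lem:3.4} together with the conditional rarity of large increments from \eqref{Q1}, rather than on any crude pathwise size bound.
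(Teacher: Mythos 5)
Your proposal reconstructs the paper's own proof, step for step. The paper divides by $\e$ and splits the sum into exactly your two pieces, $H^{(1)}_{n,\e}(\th)=\sum_{k}\e^{-1}\int_\t^\T g_{k-1}(\th)[b(X_t,\th_0)-b_{k-1}(\th_0)]\,\df t\cdot\I_{\{|\DX|>\d_{n,\e}\}}$ and $H^{(2)}_{n,\e}(\th)=\sum_k g_{k-1}(\th)[\D_k^n Q^\e]\,\I_{\{|\DX|>\d_{n,\e}\}}$; it dismisses $H^{(1)}_{n,\e}$ by the argument of Lemma 3.6 of Long et al.\ (your drift-remainder estimate), and for $H^{(2)}_{n,\e}$ it argues precisely what your third paragraph sketches: the unfiltered sum converges by Lemma \ref{lem:3.4}, each event $\{|\DX|>\d_{n,\e}\}$ has vanishing probability by \eqref{Q1}, hence $H^{(2)}_{n,\e}\toP 0$ --- with no further justification. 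So the step you honestly flag as ``the main obstacle'' is exactly the step the paper leaves as a bare assertion.

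Your suspicion about that step is well founded, and in fact no argument (cancellation-based or otherwise) can close it under the stated hypotheses. The inference ``the unfiltered sum converges and each single filter event is negligible, hence the filtered-out part vanishes'' is invalid: there are $n$ events, each of probability of order $\D_n$, and they are perfectly correlated with the large summands, so their union is not negligible once $Q$ has jumps and $\d_{n,\e}/\e\not\to\infty$. Concretely, take $d=1$, $b(x,\th)=\th$, $g\equiv 1$, $Q^\e\equiv Q=N$ a standard Poisson process (so Q1[$\g$] holds for every $\g\in(0,1)$), let $n\e\to\infty$ and $\d_{n,\e}=\e/5$, which satisfies \eqref{d-1}. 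Then $\chi_k(\th_0)=\e\,\D_k^n N$, the filter eventually selects exactly the intervals containing a jump, and the sum in question equals $\e N_1$ with probability tending to one: this is $O_p(\e)$, not $o_p(\e)$. Thus the removed contribution genuinely converges to the sum of the ``large'' jumps of $Q$; the statement itself requires an additional assumption, essentially $\d_{n,\e}/\e\to\infty$, ensuring that the filter asymptotically retains every jump of $Q$ (note that \eqref{d-1} only bounds $\d_{n,\e}/\e$ below by a multiple of $\D_n^\g\to 0$). This is consistent with the paper's own Section \ref{sec:normal?} and its simulations with $\d_{n,\e}=\e/5$: when the jump-size cutoff $\d/\e$ does not diverge, the large jumps are cut and the limit distribution changes, so the filtered estimator cannot be asymptotically equivalent to the unfiltered LSE. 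In short: your proof attempt matches the paper's, but both contain the same genuine gap, and the gap is in the statement as much as in the proof.
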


\begin{proof}
Using $\DX=\int_\t^\T b(X_t,\th_0)\,\df t + \e \D_k^nQ^\e$, we have that 
\begin{align*}
\e^{-1} \sum_{k=1}^n &g_{k-1}(\th)[\chi_k(\th_0)] \I_{\{|\DX|> \d_{n,\e}\}} \\
&= \sum_{k=1}^n \e^{-1}\int_\t^\T g_{k-1}(\th)[b(X_t,\th_0) - b_{k-1}(\th_0)]\,\df t\cdot \I_{\{|\DX|> \d_{n,\e}\}} \\
&\quad + \sum_{k=1}^n g_{k-1}(\th)[\D_k^n Q^\e] \cdot \I_{\{|\DX|> \d_{n,\e}\}} \\
&=: H^{(1)}_{n,\e}(\th) + H^{(2)}_{n,\e}(\th). 
\end{align*}

As for $H^{(1)}_{n,\e}$, we easily find it converges to zero in probability as $n\e\to \infty$ 
by the same argument as for $H^{(1)}_{n,\e}$ in Lemma 3.6 by Long {\it et al.} \cite{lss13}. 

As for $H^{(2)}_{n,\e}$, it follows from Lemma \ref{lem:3.3} and \ref{lem:3.4} that 
\begin{align*}
H^{(2)}_{n,\e}(\th) &= \sum_{k=1}^n g_{k-1}(\th)[\D_k^n Q^\e] - \sum_{k=1}^n g_{k-1}(\th)[\D_k^n Q^\e]\filter \toP 0,
\end{align*}
for each $\th\in \Theta$.  Hence the proof is completed. 

\end{proof}

\subsection{Proof of Theorem \ref{thm:consist}}

We shall show that $\wh{\th}_{n,\e}$ is asymptotically equivalent to $\wh{\th}_{n,\e}^{LSE}$ given in \eqref{lse1}. 
Let 
\begin{align*}
\wt{\Psi}_{n,\e}(\th):=\e^2\l(\Psi_{n,\e}(\th) - \Psi_{n,\e}(\th_0)\r), \\
\wt{\Phi}_{n,\e}(\th):=\e^2\l(\Phi_{n,\e}(\th) - \Phi_{n,\e}(\th_0)\r). 
\end{align*}
where $\Psi_{n,\e}$ and $\Phi_{n,\e}$ are given in \eqref{lse2} and \eqref{tte2}. 
Then $\wh{\th}_{n,\e}^{LSE}$ and $\wh{\th}_{n,\e}$ are respectively minimum contrast estimators for contrast functions $\Psi_{n,\e}$ and $\Phi_{n,\e}$. 

By the same argument as in the proof of Theorem 2.1 with Remark 4.3 by Long {\it et al.} \cite{lss13}, 
all we need to show is 
\begin{align*}
\sup_{\th\in\Theta} |\wt{\Phi}_{n,\e}(\th) - F(\th_0)|\toP 0,  
\end{align*}
where $F(\th):= \int_0^1 |b(X_t^0,\th) - b(X_t^0,\th_0)|^2\,\df t$. 
Since $\sup_{\th\in\Theta}|\wt{\Psi}_{n,\e}(\th) - F(\th_0)|\toP 0$, 
Therefore, 
\begin{align*}
\sup_{\th\in\Theta} \l|\wt{\Phi}_{n,\e}(\th) - F(\th_0)\r|
&\le \sup_{\th\in\Theta} \l|\wt{\Phi}_{n,\e}(\th) - \wt{\Psi}_{n,\e}(\th)\r| + \sup_{\th\in\Theta} \l|\wt{\Psi}_{n,\e}(\th) - F(\th_0)\r| \\
&= \sup_{\th\in\Theta} \l|n\sum_{k=1}^n |\chi_{k}(\th)|^2\I_{\{|\DX|> \d_{n,\e}\}}\r| + o_p(1) \\
&\lesssim \sup_{\th\in\Theta} \l|\sum_{k=1}^n (b_{k-1}(\th) - b_{k-1}(\th_0))[\chi_{k}(\th_0)]\I_{\{|\DX|> \d_{n,\e}\}}\r| \\
&+ \sup_{\th\in\Theta} \l|\frac{1}{n}\sum_{k=1}^n |b_{k-1}(\th) - b_{k-1}(\th_0)|^2\I_{\{|\DX|> \d_{n,\e}\}}\r| + o_p(1) 
\end{align*}
The last first and second terms converges to zero in probability by Lemmas \ref{lem:3.3} and \ref{lem:3.5}. This completes the proof.

\subsection{Proof of Theorem \ref{thm:asym-dist}}

We use the following notation: 
\begin{itemize}
\item $G_{n,\e}(\th) = 2^{-1}\n_\th \Phi_{n,\e}(\th)\ \in \R^p$; 
\item $K_{n,\e}(\th) = \n_\th G_{n,\e}(\th) \,\l(=\n_\th^2 \Phi_{n,\e}(\th)\r)\ \in \R^p \otimes \R^p$; 
\item $K(\th)=\int_0^1 \n_\th^2b(X_t^0,\th)[b(X_t^0,\th_0) - b(X_t^0,\th)]\,\df t - I(\th_0)\ \in  \R^p \otimes \R^p$. 
\end{itemize}
Then it follows by Taylor's formula that, for some $\rho\in (0,1)$, 
\[
\int_0^1 K_{n,\e}\l(\th_0 + u(\wh{\th}_{n,\e} - \th_0)\r)\,\df u \cdot \e^{-1}\l(\wh{\th}_{n,\e} - \th_0\r) = \e^{-1}G_{n,\e}(\wh{\th}_{n,\e}) - \e^{-1}G_{n,\e}\l(\th_0\r). 
\]
Let us show that 
\begin{align}
&\e^{-1} G_{n,\e}(\th_0) \toP \int_0^1 \n_\th b(X_t^0,\th_0)\,[\df Q_t];\label{lem:3.6} \\
&\sup_{\th\in \Theta}\l|K_{n,\e}(\th) - K(\th)\r| \toP 0. \label{lem:3.7}
\end{align}
Then the result follows by the same argument as in the proof of Theorem 2.2 by Long {\it et al.} \cite{lss13}; see also Uchida \cite{u08}. 

As for \eqref{lem:3.6}: it follows that 
\begin{align*}
\e^{-1} G_{n,\e}(\th_0) &= \e^{-1}\sum_{k=1}^n \n_\th b_{k-1}(\th_0)[\chi_k(\th_0)] \filter \\
&= \e^{-1}\sum_{k=1}^n \n_\th b_{k-1}(\th_0)[\chi_k(\th_0)] - \e^{-1}\sum_{k=1}^n \n_\th b_{k-1}(\th_0)[\chi_k(\th_0)] \I_{\{|\DX|> \d_{n,\e}\}} \\
&= \e^{-1}\sum_{k=1}^n \n_\th b_{k-1}(\th_0)\l[\int_\t^\T \{b(X_t,\th_0) - b_{k-1}(\th_0)\}\,\df t\r] + \sum_{k=1}^n \n_\th b_{k-1}(\th_0)[\D_k^nQ] \\
&\quad - \e^{-1}\sum_{k=1}^n \n_\th b_{k-1}(\th_0)[\chi_k(\th_0)] \I_{\{|\DX|> \d_{n,\e}\}}
\end{align*}
Then we can show that the last first term converges to zero in probability by the same evaluation as for $H_{n,\e}^{(1)}(\th_0)$ in the proof of Lemma 3.6 in \cite{lss13}, 
the second term converges to $\int_0^1 \n_\th b(X_t^0,\th_0)[\df Q_t]$ in probability by Lemma \ref{lem:3.4}, and that 
the third term goes to zero in probability by Lemma \ref{lem:3.5-cor}. 
Similarly, as for \eqref{lem:3.7}, it follows that 
\begin{align*}
K_{n,\e}(\th) &= \sum_{k=1}^n \n_\th^2b_{k-1}(\th)[\chi_k(\th)] \filter - \frac{1}{n}\sum_{k=1}^n \n_\th b_{k-1}(\th)^\top \n_\th b_{k-1}(\th)\filter \\
&= \sum_{k=1}^n \n_\th^2b_{k-1}(\th)[\chi_k(\th_0)] \filter \\
&\quad+ \frac{1}{n}\sum_{k=1}^n \n_\th^2 b_{k-1}(\th)[b_{k-1}(\th_0) - b_{k-1}(\th)] \filter \\
&\quad- \frac{1}{n}\sum_{k=1}^n \n_\th b_{k-1}(\th)^\top \n_\th b_{k-1}(\th)\filter, 
\end{align*}
cf. the expression of $K_{n,\e}^{ij}(\th)$ in the proof of Lemma 3.7 by Long {\it et al.} \cite{lss13}. 
Hence Lemmas \ref{lem:3.3} and \ref{lem:3.5} yield \eqref{lem:3.7}. 
Using the facts \eqref{lem:3.6} and \eqref{lem:3.7}, and the consistency result: Theorem \ref{thm:consist}, 
we can show that 
\[
\e^{-1}\l(\wh{\th}_{n,\e} - \th_0\r) \sim^p -K^{-1}(\th_0)\cdot \e^{-1}G_{n,\e}(\th_0), \quad n\to \infty,\ \e\to 0, 
\]
by completely the same argument as in the proof of Theorem 2.2 by Long {\it et al.} \cite{lss13}. Therefore, the proof is completed. 

\subsection{Proof of Theorem \ref{thm:mighty}}

Denote by $\wh{u}:=\e^{-1}(\wh{\th}_{n,\e} - \th_0)$. 
Since $\wh{u} \toP \z$ from Theorem \ref{thm:asym-dist}, 
the proof ends if we show that $\wh{u}$ is $L^p$-bounded: $\sup_{n,\e}\E|\wh{u}|^p <\infty$ for any $p>0$. 
For this proof, let $U_{n,\e}(\th_0) = \l\{u\in \R^p\,:\,\th_0 + \e u \in \Theta_0\r\}$, and define random fields $\Z_{n,\e}:U_{n,\e}(\th_0)\to \R_+$ by 
\begin{align*}
\Z_{n,\e}(u) &= \exp\l\{ -  \Phi_{n,\e}(\th_0 + \e u) +  \Phi_{n,\e}(\th_0)\r\}, \quad u\in U_{n,\e}(\th_0). 
\end{align*}
Then, since $\th_0\in \Theta_0$, we see that 
\begin{align}
\Z_{n,\e}(\wh{u}) \ge \sup_{u\in U_{n,\e}(\th_0)} \Z_{n,\e}(u) \ge \Z_{n,\e}(0)=1. \label{Z}
\end{align}
Setting $V_{n,\e}(r) := U_{n,\e}(\th_0) \cap \l\{u\in \R^p\,:\,|u|\ge r\r\}$, 
we consider the following condition for the random fields $\Z_{n,\e}$: for every $L>0$ and $r>0$, 
\begin{align}
\P\l(\sup_{u\in V_{n,\e}(r)} \Z_{n,\e}(u) \ge e^{-r} \r) \lesssim r^{-L}, \label{PLDI}
\end{align}
which is called the {\it polynomial type large deviation inequality (PLDI)}, and is investigated by Yoshida \cite{y11} in details. 
If this PLDI holds true then, for any $L>p$,  
\begin{align*}
\sup_{n\in \N,\e>0}\E|\wh{u}|^p &= \sup_{n\in \N,\e>0} p\int_0^\infty r^{p-1}\P\l(|\wh{u}| \ge r\r) \,\df r \\
&\le \sup_{n\in \N,\e>0} p\int_0^\infty r^{p-1}\l\{1 \wedge \P\l(\sup_{u\in V_{n,\e}(r)} \Z_{n,\e}(u) \ge 1 \r) \r\}\,\df r  \\
&\lesssim \int_0^\infty r^{p-1}(1\wedge r^{-L})\,\df r < \infty, 
\end{align*}
here we used \eqref{Z} in the first inequality. 
Therefore the proof ends if we show \eqref{PLDI}, some sufficient conditions for which are found in the paper by Yoshida \cite{y11}. 
Here we shall verify the conditions [A1$''$], [A4$'$], [A6], [B1] and [B2] given in Theorem 3, (c) in \cite{y11}. 
See also Ogihara and Yoshida \cite{oy11} or Masuda \cite{ma13} for simplified descriptions for those conditions. 

Applying Taylor's formula with the notation $G_{n,\e}(\th)$, $K_{n,\e}(\th)$ and $K(\th)$ given in the proof of Theorem \ref{thm:asym-dist}, we have 
\begin{align*}
\log \Z_{n,\e}(u) &= -  \Phi_{n,\e}(\th_0 + \e u) +   \Phi_{n,\e}(\th_0) \\
&=  -\e G_{n,\e}(\th_0)[u] - \frac{\e^2}{2}\l\{-K(\th_0)\r\}[u^{\otimes 2}] + R_{n,\e}(u), 
\end{align*}
where 
\begin{align*}
R_{n,\e}(u) &= \e^2 \int_0^1 (s-1) \l\{K(\th_0)[u^{\otimes 2}] - K_{n,\e}(\th_0 + s\cdot \e u)[u^{\otimes 2}]\r\}\,\df s \\
&= \frac{\e^2}{2}\l\{K_{n,\e}(\th_0) - K(\th_0)\r\}[u^{\otimes 2}] - \e^3 \int_0^1 (s-1) \int_0^1 \n_\th K_{n,\e}(\th_0 + ts\cdot\e u)[u^{\otimes 3}]\,\df t\df s. 
\end{align*}
This means that $\Z_{n,\e}$ could be {\it Partially Locally Asymptotically Quadratic (PLAQ)}, which is a starting point of \cite{y11}. 
According to Theorem 3, (c) in \cite{y11}, if we take some ``tuning parameters" given in [A4$'$] in \cite{y11} such as $\b_1\approx 1/2$, $\rho_1,\rho_2,\b,\b_2\approx 0$, 
then the PLDI \eqref{PLDI} holds true if the following [A1$''$], [A6], [B1] and [B2] are satisfied; we use the same conditioning numbers as in \cite{y11} to make those correspondences clear. 
\begin{itemize}
\item[{[A1$''$]}] For every $q>0$, 
\begin{align}
\sup_{n\in \N,\e>0} \E\l[\l(\e^2 \sup_{\th\in \Theta}\l|\n_\th^3 \Phi_{n,\e}(\th)\r|\r)^q\r] < \infty. \label{a1-1}
\end{align}
Moreover, for given $L>0$ and any $\d>0$ small enough, 
\begin{align}
\sup_{n\in \N,\e>0} \E\l[\l(\e^{-1} \l|K_{n,\e}(\th_0) - K(\th_0)\r|\r)^{L-\d}\r] < \infty. \label{a1-2}
\end{align}

\item [{[A6]}] For any $\d>0$ small enough, 
\begin{align}
&\sup_{n\in \N,\e>0} \E\l[\l|\e G_{n,\e}(\th_0)\r|^{L+\d}\r] < \infty; \label{a6-1} \\
&\sup_{n\in \N,\e>0} \E\l[\sup_{\th\in \Theta}\l(\e^{-1} \l|\wt{\Phi}_{n,\e}(\th) - F(\th)\r|\r)^{L+\d}\r] < \infty,  \label{a6-2}
\end{align}
where $\wt{\Phi}_{n,\e}$ and $F(\th) = \int_0^1 |b(X_t^0,\th) - b(X_t^0,\th_0)|^2\,\df t$ are given in the proof of Theorem \ref{thm:consist}. 

\item[{[B1]}] The matrix $-K(\th_0)\ (=I(\th_0))$ is deterministic and positive definite. 

\item[{[B2]}] There exists a deterministic positive number $\chi$ such that 
\[
-\l\{F(\th) - F(\th_0)\r\} \le -\chi|\th - \th_0|^2. 
\]
\end{itemize}
(Note that the notational correspondence between \cite{y11} and ours is: $a_T=\e$; $b_T=\e^{-2}$; $\H_T=-\Phi_{n,\e}$; $\Y=-F$ and $\G=-K$). 

Now we can easily check that the conditions \eqref{a1-1} and \eqref{a6-1} are true by Lemma \ref{lem:3.5}, \eqref{lem:3.5-2}, and that 
\eqref{a1-2} and \eqref{a6-2} are also true by Lemma \ref{lem:3.3}, \eqref{lem:3.3-2}. 
Moreover the conditions [B1] and [B2] are clear from the assumptions A1 and A4, respectively, 
the proof ends if we show \eqref{a1-1}--\eqref{a6-2}. 
Hence the proof is completed.

\subsection{Proof of Theorem \ref{thm:finite-activity}}

First, we shall show the consistency: 
\[
\wt{\th}_{n,\e,\d} \toP \th_0. 
\]
Since we suppose that the jumps are specified, the following ``negligibility" is obtained: 
\begin{align}
\P\l(\|\D Q^\e\|^*_k > \d\r) &= 1 - \P\l\{\sup_{t\in (\t,\T]}|\D Q_t| \le \d/\e\r\} \notag \\
&= 1 - e^{ - \la(\d/\e) \D_n} \to 0 \label{negligible2}
\end{align}
since $\la(\d/\e) \D_n\to 0$. 
Then we have the following lemma, which is the same type of results as Lemmas \ref{lem:3.3} and \ref{lem:3.5}. 

\begin{lemma}\label{lem3.5-6}
Let $g\in C^{1,1}_\uparrow(\R^d\times\Theta;\R)$. Suppose A1, A2, and that 
\[
\d/\e\to 0. 
\]
Then, we have 
\begin{align*}
&\frac{1}{n}\sum_{k=1}^n g_{k-1}(\th) \qfilter \toP \int_0^1 g(X_t^0,\th)\,\df t,\\
&\sum_{k=1}^n g_{k-1}(\th)[\chi_k(\th_0)] \qfilter \toP 0, 
\end{align*}
uniformly in $\th\in\Theta$. 
\end{lemma}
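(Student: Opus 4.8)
The plan is to follow the templates of Lemmas \ref{lem:3.3} and \ref{lem:3.5} almost verbatim, with the jump-based filter $\qfilter$ playing the role previously played by $\filter$, and with the negligibility \eqref{negligible2} replacing the estimate \eqref{Q1}. The common device is to write each filtered sum as the corresponding \emph{unfiltered} sum minus a \emph{complementary remainder} carrying the indicator $\I_{\{\|\D Q^\e\|^*_k > \d\}}$, then to show that the unfiltered sum has the claimed limit while the remainder vanishes in probability. The advantage of this particular filter is that the event $\{\|\D Q^\e\|^*_k > \d\}$ depends only on the jumps of $Q$ on $(\t,\T]$, hence carries no $\th$-dependence and, by the independent-increments property of the L\'evy process $Q$, is independent of $\F_\t$; thus $\P(\|\D Q^\e\|^*_k > \d\mid\F_\t)=1-e^{-\la(\d/\e)\D_n}\to 0$ exactly as in \eqref{negligible2}, and since $\la(\d/\e)\le\la<\infty$ in the finite-activity case this holds automatically.

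For the first assertion I would write
\[
\frac1n\sum_{k=1}^n g_{k-1}(\th)\qfilter = \frac1n\sum_{k=1}^n g_{k-1}(\th) - \frac1n\sum_{k=1}^n g_{k-1}(\th)\I_{\{\|\D Q^\e\|^*_k > \d\}}.
\]
The first sum on the right converges uniformly in $\th$ to $\int_0^1 g(X_t^0,\th)\,\df t$ by the Riemann-sum argument used for \eqref{lem:3.3-3}, relying on Lemma \ref{lem:3.1}, \eqref{lem:3.1-1}. For the remainder I would set $\xi_{k,n,\e}(\th):=n^{-1}(1+|X_\t|)^C\I_{\{\|\D Q^\e\|^*_k > \d\}}$ and verify the two conditions $\sum_k\E[\xi_{k,n,\e}\mid\F_\t]\toP 0$ and $\sum_k\E[\xi_{k,n,\e}^2\mid\F_\t]\toP 0$ of Lemma 9 in \cite{gj93}, exactly as in the proof of Lemma \ref{lem:3.3}. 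Here the first condition reduces to $(1-e^{-\la(\d/\e)\D_n})\cdot\frac1n\sum_k(1+|X_\t|)^C\toP 0$, which holds because the prefactor tends to $0$ while the empirical average is $O_p(1)$ by Lemma 3.3 of \cite{lss13}; the second condition is handled identically.

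For the second assertion I would decompose analogously,
\[
\sum_{k=1}^n g_{k-1}(\th)[\chi_k(\th_0)]\qfilter = \sum_{k=1}^n g_{k-1}(\th)[\chi_k(\th_0)] - \sum_{k=1}^n g_{k-1}(\th)[\chi_k(\th_0)]\I_{\{\|\D Q^\e\|^*_k > \d\}}.
\]
The unfiltered sum tends to $0$ uniformly in $\th$ by the argument of Lemma \ref{lem:3.5} (equivalently Lemma 3.5 of \cite{lss13}), where the indicator is inessential; and the complementary remainder is $o_p(1)$ by the same estimates as in Lemma \ref{lem:3.5-cor}, now invoking the jump-based negligibility \eqref{negligible2} in place of \eqref{Q1}. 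Writing $\chi_k(\th_0)=\int_\t^\T\{b(X_t,\th_0)-b_{k-1}(\th_0)\}\,\df t+\e\,\D_k^n Q^\e$ splits this remainder into a drift part, treated as $H^{(1)}_{n,\e}$ in Lemma \ref{lem:3.5-cor}, and a noise part, controlled through Lemma \ref{lem:3.4} together with the vanishing filter probability.

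The main obstacle is to confirm that the two remainders vanish \emph{uniformly} in $\th$. This is where the present filter is in fact easier to handle than $\filter$: since $\{\|\D Q^\e\|^*_k>\d\}$ is a functional of the noise alone and independent of $\th$, the suprema over $\th$ act only on the smooth factors $g_{k-1}(\th)$ and $b_{k-1}(\th)$, whose polynomial growth is already controlled by Lemma \ref{lem:3.1} and Corollary \ref{cor:lem:3.1}, so the uniformity passes through without extra work. The only genuine input beyond the earlier lemmas is the negligibility \eqref{negligible2}, whose validity in the finite-activity regime follows at once from $\la(\d/\e)\le\la<\infty$ and $\D_n=1/n\to 0$.
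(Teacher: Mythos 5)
Your proposal is correct and matches the paper's intended argument: the paper disposes of this lemma with the single remark that it is an ``obvious modification of the proofs of Lemmas \ref{lem:3.3} and \ref{lem:3.5} using the negligibility condition \eqref{negligible2},'' and your decomposition of each filtered sum into the unfiltered sum minus a complementary remainder---handled via Lemma 9 of \cite{gj93} and the Lemma \ref{lem:3.5-cor}-type estimates with \eqref{negligible2} replacing \eqref{Q1}---is exactly that modification spelled out. Your observation that the event $\l\{\|\D Q^\e\|^*_k>\d\r\}$ is independent of $\F_\t$ and carries no $\th$-dependence, so that uniformity in $\th$ comes for free, is precisely the simplification the paper's remark implicitly relies on.
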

The proof of this lemma is an obvious modification of the proofs of Lemmas \ref{lem:3.3} and \ref{lem:3.5} 
using the negligibility condition \eqref{negligible2}. 
Then by the same argument as in the proof of Theorem \ref{thm:consist}, the consistency follows. 

Next, note that, as in the proof of Theorem \ref{thm:asym-dist}, 
\[
\int_0^1 K_{n,\e,\d}\l(\th_0 + u(\wt{\th}_{n,\e,\d} - \th_0)\r)\,\df u \cdot \l(\wt{\th}_{n,\e,\d} - \th_0\r) = G_{n,\e,\d}(\wh{\th}_{n,\e}) - G_{n,\e,\d}\l(\th_0\r). 
\]
where 
\begin{align*}
G_{n,\e,\d}(\th) &= 2^{-1}\n_\th \wt{\Phi}_{n,\e,\d}(\th); \quad K_{n,\e,\d}(\th) = \n_\th G_{n,\e,\d}(\th); \\
K(\th)&=\int_0^1 \n_\th^2b(X_t^0,\th)[b(X_t^0,\th_0) - b(X_t^0,\th)]\,\df t - I(\th_0). 
\end{align*}
If we show that 
\begin{align}
\e^{-1}G_{n,\e,\d}\l(\th_0\r) \toP \int_0^1 \n_\th b(X_t^0,\th_0)\l[\df W_t\r], \label{asy-normal0}
\end{align}
then we obtain the consequence because the convergence 
\begin{align*}
\sup_{\th\in \Theta}\l|K_{n,\e,\d}(\th) - K(\th)\r| \toP 0
\end{align*}
holds true due to Lemma \ref{lem3.5-6} and the same argument as in the proof of Lemma 3.7 in \cite{lss13}. 

Note that 
\begin{align*}
\e^{-1}G_{n,\e,\d}\l(\th_0\r) 
&= \e^{-1} \sum_{k=1}^n \n_\th b_{k-1}(\th_0)\l[\int_\t^\T\{b(X_s,\th_0) - b_{k-1}(\th_0)\}\,\df t\r]\I_{\l\{\|\D Q^\e\|^*_k \le \d\r\}} \\
&\quad + \sum_{k=1}^n \n_\th  b_{k-1}(\th_0)\l[\sum_{i=N_\t+1}^{N_\T} Y_i\I_{\{|Y_i|\le \d\}}\r]  \\
&\quad + \sum_{k=1}^n \n_\th  b_{k-1}(\th_0)\l[\D_k^n W\r] \qfilter \\
&=: I^{(1)}_{n,\e,\d} + I^{(2)}_{n,\e,\d} + I^{(3)}_{n,\e,\d}.  
\end{align*}
Note that $I^{(1)}_{n,\e,\d} \to 0$ by the same argument as in the proof of Lemma 3.6, $H^{(1)}_{n,\e}(\th_0)$ in \cite{lss13} 
by the assumption \eqref{ne}. 
Moreover, it is easy to see that 
\begin{align*}
\E\l|I^{(2)}_{n,\e,\d}\r| = \frac{\la \d}{n\e}\sum_{k=1}^n \E|\n_\th  b_{k-1}(\th_0)| \to 0. 
\end{align*}
Furthermore, note that 
\[
I^{(3)}_{n,\e,\d} = \sum_{k=1}^n \n_\th  b_{k-1}(\th_0)\l[\D_k^n W\r] - \sum_{k=1}^n \n_\th  b_{k-1}(\th_0)\l[\D_k^n W\r] \I_{\l\{\|\D Q^\e\|^*_k > \d\r\}}
\]
Now, observe a measurability that 
\[
\l\{\omega\in \Omega\,:\, \|\D Q\|^*_k \le \d\r\} \in \s\Big(N\big((\t,s],[-\d,\d]\big)\,:\,s\in (\t,\T]\Big), 
\]
which is independent of $\D_k^nW$ and $X_\t$. Therefore, 
\begin{align*}
&\E\l| \sum_{k=1}^n \n_\th  b_{k-1}(\th_0)\l[\D_k^n W\r] \I_{\l\{\|\D Q^\e\|^*_k > \d\r\}}\r| \\
&\le  \sum_{k=1}^n \E\l| \n_\th b_{k-1}(\th_0)\l[\D_k^n W\r]\r| \P\l(\|\D Q^\e\|^*_k > \d\r) \\
&\le \sum_{k=1}^n \l(\E\int_{\t}^{\T}  trace \l(\n_\th b_{k-1}^{\otimes 2}(\th_0)\r)\,\df t\r)^{1/2}\l(1 - e^{-\la(\d/\e) \D_n}\r) \to 0
\end{align*}
from \eqref{negligible2}. 
As a result, Lemma \eqref{lem:3.4} yields that 
\[
I^{(3)}_{n,\e,\d} \toP  \int_0^1 \n_\th b(X_t^0,\th_0)\l[\df W_t\r]. 
\]
This completes the proof of \eqref{asy-normal0}.

\subsection{Proof of Theorem \ref{thm:infinite-activity}}

Note that, under the asymptotic conditions, it follows that 
\begin{align}
\P\l(\|\D Q^\e\|^*_k \le \d\r) &= e^{ - \la(\d/\e) \D_n} \to 0;\quad 
\P\l(\|\D Q^\e\|^*_k > \d\r)  \to 1, \label{poisson}
\end{align}
which is a different situation in the previous theorem. 

Under this setting, we can show the following lemma. 
\begin{lemma}\label{lem3.5-6-alt}
Let $g\in C^{1,1}_\uparrow(\R^d\times\Theta;\R)$. Suppose A1, A2, Q1[$\g$], and that 
\[
\frac{\la(\d/\e)}{n\log n} \to c \in (0,1),\quad n\e\D_n^\g\to 0.  
\]
Then it follows that, for $\eta_n:= e^{ - \la(\d/\e) \D_n}$,   
\begin{align}
&\frac{1}{n\eta_n}\sum_{k=1}^n g_{k-1}(\th) \qfilter \toP \int_0^1 g(X_t^0,\th)\,\df t, \label{lem8-1}\\
&\frac{1}{\eta_n}\sum_{k=1}^n g_{k-1}(\th)[\chi_k(\th_0)] \qfilter \toP 0,  \label{lem8-2}
\end{align}
uniformly in $\th\in\Theta$. 
\end{lemma}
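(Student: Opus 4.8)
The plan is to exploit the L\'evy structure of $Q$, for which the filter $J_k:=\qfilter$ records only whether the interval $(\t,\T]$ contains a jump of $Q$ exceeding $\d/\e$. Writing the L\'evy--It\^o decomposition $Q=\s W+Z^{\le}+Z^{>}$, where $Z^{>}$ collects the jumps larger than $\d/\e$ and $Z^{\le}$ the smaller ones, the variables $\{J_k\}_{k=1}^n$ are i.i.d.\ Bernoulli with $\P(J_k=1)=\E[J_k\mid\F_\t]=\eta_n$, and each $J_k$ is independent of $\F_\t$ and of the ``small part'' $\s\D_k^nW+\D_k^nZ^{\le}$. This independence is the engine of both assertions. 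Two elementary facts I would record first: on $\{J_k=1\}$ one has $\D_k^nQ=\s\D_k^nW+\D_k^nZ^{\le}$, and $n\eta_n=n\,e^{-\la(\d/\e)\D_n}\sim n^{1-c}\to\infty$, since $\la(\d/\e)\D_n\sim c\log n$ with $c<1$.

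For \eqref{lem8-1} I would apply Lemma~9 of Genon-Catalot and Jacod \cite{gj93} to $\xi_k:=(n\eta_n)^{-1}g_{k-1}(\th)J_k$, exactly as in the proof of Lemma \ref{lem:3.3}. Because $J_k$ is independent of $\F_\t$, the conditional-mean sum is $\sum_k\E[\xi_k\mid\F_\t]=n^{-1}\sum_k g_{k-1}(\th)$, which tends to $\int_0^1 g(X_t^0,\th)\,\df t$ by Lemma \ref{lem:3.1}; the conditional second-moment sum is $\sum_k\E[\xi_k^2\mid\F_\t]=(n^2\eta_n)^{-1}\sum_k g_{k-1}^2(\th)=(n\eta_n)^{-1}O_p(1)\to0$ since $n\eta_n\to\infty$. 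This gives the pointwise limit, and uniformity in $\th$ follows from stochastic equicontinuity: the gradient $\n_\th g$ is again of class $C^{1,0}_\uparrow(\R^d\times\Theta;\R^p)$, so the same estimate applied to $\n_\th$ together with the Sobolev embedding $\sup_\th|u|\lesssim\|u\|_{L^q(\Theta)}+\|\n_\th u\|_{L^q(\Theta)}$ for $q>p$ upgrades the convergence to a uniform one.

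For \eqref{lem8-2} I would split $\chi_k(\th_0)=\int_\t^\T\{b(X_t,\th_0)-b_{k-1}(\th_0)\}\,\df t+\e\,\D_k^nQ^\e$ and treat the pieces separately after localising with $\tau_m^{n,\e}$ so that $g_{k-1}$ is bounded. For the drift piece, A1 gives $|b(X_t,\th_0)-b_{k-1}(\th_0)|\lesssim|X_t-X_\t|$, and on $\{J_k=1\}$ the increment $X_t-X_\t$ is driven only by the small part; using $\E[J_k]=\eta_n$ to absorb the factor $\eta_n^{-1}$ and $n\D_n=1$, the summed contribution is of order $\D_n+\e\,\D_n^{1/2}\s(\d/\e)\to0$. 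For the noise piece $\e\eta_n^{-1}\sum_k g_{k-1}(\th)[\D_k^nQ]J_k$, I would center the small part as $\s\D_k^nW+\D_k^nZ^{\le}=\mu\D_n+\wt{m}_k$, with $\wt{m}_k$ a centred martingale increment independent of $J_k$; the compensator part contributes $\approx\e[\mu]\int_0^1 g(X_t^0,\th)\,\df t$, controlled through the drift term in the $h$-function defining the index $\b$, and the martingale part is handled by a conditional-variance estimate. By the independence of $J_k$ from $\wt{m}_k$ and the orthogonality of increments, its squared $L^2$-norm is $\approx \e^2\eta_n^{-1}\s^2(\d/\e)\cdot n^{-1}\sum_k g_{k-1}^2(\th)$, i.e.\ of order $\e^2\s^2(\d/\e)/\eta_n$.

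The main obstacle is exactly this last estimate: forcing $\e^2\s^2(\d/\e)/\eta_n\to0$ (together with $\e|\mu|\to0$ for the small-jump drift). This does not follow from $n\e\D_n^\g\to0$ and $\la(\d/\e)\sim cn\log n$ in isolation, because $\s^2(\d/\e)$ is not pinned down by $\la(\d/\e)$ alone; here is where the finer high-activity hypotheses of Theorem \ref{thm:infinite-activity}---namely \eqref{with-rho}, \eqref{ne} and the Gaussian-approximation condition \eqref{sigma(delta)-1} in the sense of Asmussen and Rosinski \cite{ar01}---enter, ensuring both that $n\eta_n\to\infty$ at the right polynomial rate and that $\e\,\s(\d/\e)$ lies in the window that kills the variance. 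I also expect the coupling between $X_t$ on $(\t,\T]$ and $J_k$ in the drift piece to need care, resolved by observing that on $\{J_k=1\}$ that coupling runs only through the small part, which is independent of $J_k$.
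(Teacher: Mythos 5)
Your argument for \eqref{lem8-1} is correct and essentially the paper's own: the filter $\qfilter$ is determined by the Poisson measure of jumps exceeding $\d/\e$, hence independent of $\F_\t$ with mean $\eta_n$, Lemma 9 of Genon-Catalot and Jacod \cite{gj93} applies since $n\eta_n\sim n^{1-c}\to\infty$, and uniformity is a routine upgrade (the paper uses the Ibragimov--Has'minskii tightness criterion \eqref{tight1}--\eqref{tight2} where you invoke Sobolev embedding; both require the same kind of moment bounds and are interchangeable here).

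The genuine gap is in \eqref{lem8-2}. Having decomposed the retained noise into compensator plus martingale, you are forced to control a variance of order $\e^2\s^2(\d/\e)/\eta_n$, you correctly observe that this is not pinned down by the lemma's hypotheses, and you then conclude that the conditions \eqref{with-rho}, \eqref{ne}, \eqref{sigma(delta)-1} of Theorem \ref{thm:infinite-activity} must be imported. That conclusion is wrong: the lemma holds exactly as stated, and your obstacle is an artifact of passing to second moments. The paper never decomposes the retained increment and never sees $\s^2(\d/\e)$; it estimates pathwise,
\begin{align*}
\frac{\e}{\eta_n}\sum_{k=1}^n \sup_{\th\in\Theta}\l|g_{k-1}(\th)[\D_k^n Q]\r|\qfilter
&\lesssim \l(1+\|X\|_*\r)^C n\e\cdot \frac{1}{n\eta_n}\sum_{k=1}^n \sup_{s\in(\t,\T]}|Q_s-Q_\t|\,\qfilter \\
&= O_p\l(n\e\sup_{s\in(0,\D_n]}|Q_s|\r) = o_p\l(n\e\D_n^{\g}\r),
\end{align*}
using stationarity of increments, the fact that $\E\l[(n\eta_n)^{-1}\sum_{k=1}^n\qfilter\r]=1$ so that this average is $O_p(1)$ (this is \eqref{op1} in the paper), and, crucially, hypothesis Q1[$\g$], which gives $\sup_{s\in(0,\D_n]}|Q_s|=o_p(\D_n^\g)$. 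The second displayed hypothesis $n\e\D_n^\g\to 0$ then kills the term outright. The idea you missed is that Q1[$\g$] is a supremum bound on the \emph{whole} increment of $Q$, and on the filter event that supremum coincides with the supremum of the small part, so it combines with the independence you already established; no CLT-type or Gaussian-approximation control of the compensated small jumps is needed at the level of this lemma. Asmussen--Rosinski and the conditions \eqref{with-rho}--\eqref{sigma(delta)-1} enter only afterwards, in the proof of Theorem \ref{thm:infinite-activity} itself, where the normalization $(\s(\d/\e)\e)^{-1}$ makes the fluctuation of the compensated small jumps the leading term rather than a remainder to be discarded.
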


\begin{flushleft}
{\it Proof of Lemma \ref{lem3.5-6-alt}.}
\end{flushleft}
As for \eqref{lem8-1}: note that 
\begin{align*}
&\sup_{\th\in \Theta}\l|\frac{1}{n\eta_n}\sum_{k=1}^n g_{k-1}(\th) \qfilter \toP \int_0^1 g(X_t^0,\th)\,\df t\r| \\
&\qquad \le \sup_{\th\in \Theta}\l|\frac{1}{n\eta_n}\sum_{k=1}^n g_{k-1}(\th) \qfilter  - \frac{1}{n}\sum_{k=1}^n g_{k-1}(\th)\r| \\
&\qquad + \sup_{\th\in \Theta}\l|\frac{1}{n}\sum_{k=1}^n g_{k-1}(\th) - \int_0^1 g(X_t^0,\th)\,\df t\r|. 
\end{align*}
and the second term on the right-hand side of the inequality converges to zero in probability from Lemma 3.3 by Long {\it et al.} \cite{lss13}. 
The last first term is rewritten as $\sum_{k=1}^n \xi_k^n(\th)$ with 
\[
\xi_k^n(\th) = \frac{1}{n} g_{k-1}(\th) \l(\frac{1}{\eta_n}\qfilter - 1\r). 
\]
Then we immediately see that, since $\F_\t$ and $\|\D Q^\e\|_k$ are independent each other, 
\[
\sum_{k=1}^n \E[\xi_k^n(\th)|\F_\t] = \frac{1}{n} \sum_{k=1}^n g_{k-1}(\th) \l[\eta_n^{-1} \P\l(\|\D Q^\e\|_k^* \le \d\r) - 1\r]= 0. 
\]
Moreover 
\begin{align*}
\sum_{k=1}^n \E[|\xi_k^n(\th)|^2|\F_\t] &= \frac{1}{n^2}\sum_{k=1}^n g_{k-1}^2(\th) \E\l|\frac{1}{\eta_n}\qfilter - 1\r|^2 \\
&= O_p\l(\frac{1}{n\eta_n}\r) = O_p\l(\frac{1}{e^{(1-c_2)\log n}}\r) \toP 0, 
\end{align*}
by the assumption. Hence we have that $\sum_{k=1}^n \xi_k^n(\th)\toP 0$ for every $\th\in \Theta$ 
from Lemma 9 by Genon-Catalot and Jacod \cite{gj93}. 

To prove the uniformity of convergence, we have to show the tightness of the sequence $\{\sum_{k=1}^n \xi_k^n(\cdot)\}_n$. 
We shall use Theorem 20 in Appendix 1 by Ibragimov and Has'minskii \cite{ih81}, that is, we shall show that, for some $H>0$ and any $N\in \N$, 
\begin{align}
\E\l|\sum_{k=1}^n \xi_k^n(\th)\r|^{2N} &< H; \label{tight1}\\
\E\l|\sum_{k=1}^n \l[\xi_k^n(\th_1) - \xi_k^n(\th_2)\r] \r|^{2N} &\le H|\th_1-\th_2|^{2N} \label{tight2}
\end{align} 

As for \eqref{tight1}: using the independent property of $X_\t$ and $\|\D Q^\e\|_k$, we have that 
\begin{align*}
\E\l|\sum_{k=1}^n \xi_k^n(\th)\r|^{2N} 
&=  \sum_{k=1}^n \E\l[|g_{k-1}(\th)|^{2N}\E\l[\l|\frac{1}{\eta_n}\qfilter - 1\r|^{2N}|\F_\t\r]\r] \\
&\lesssim \E\l[\sup_{\th\in \Theta, t\in [0,1]}|g(X_t)|^{2N}\r]\frac{1}{n^{2N}}\sum_{k=1}^n \l(\frac{1}{\eta_n^{2N-1}} +  1\r) \\
&= O\l(\frac{1}{(n\eta_n)^{2N-1}}\r) = O\l(n^{-(1-c)}\r) \to 0. 
\end{align*}
Therefore it is bounded. Inequality \eqref{tight2} is similarly proved since $g\in C^{1,1}_\uparrow(\R^d\times\Theta;\R)$.  
Hence \eqref{lem8-1} is proved. 

Finally we shall show \eqref{lem8-2}. Note that 
\begin{align*}
\sup_{\th\in \Theta}&\l|\frac{1}{\eta_n}\sum_{k=1}^n g_{k-1}(\th)[\chi_k(\th_0)] \qfilter\r| \\
&\le \frac{1}{\eta_n}\sum_{k=1}^n \int_\t^{\T} \sup_{\th\in \Theta}\l|g_{k-1}(\th) \l[b(X_s,\th_0) - b_{k-1}(\th_0)\r]\r|\,\df s\cdot \qfilter \\
&\quad + \frac{\e}{\eta_n} \sum_{k=1}^n \sup_{\th\in \Theta}\l|g_{k-1}(\th)\l[\D_k^nQ\r]\r|\qfilter \\
&=: J^{(1)}_n  + J^{(2)}_n. 
\end{align*}

As for $J^{(1)}_n$: it follows from the assumption A1 that, for $Y^{n,\e}$ given in Lemma \ref{lem:3.1}, 
\begin{align*}
|J^{(1)}_n| &= \frac{1}{\eta_n}\sum_{k=1}^n\int_\t^{\T}  \sup_{\th\in \Theta}\l| g(Y_s^{n,\e})\l[b(X_s,\th_0) - b(Y_s^{n,\e},\th_0)\r] \r|\,\df s\cdot \qfilter \\
&\lesssim \l(1 + \sup_{t\in [0,1]}|X_t|\r)^C \cdot \frac{1}{n\eta_n}\sum_{k=1}^n \sup_{t\in (\t,\T]}|X_t - Y_t^{n,\e}| \qfilter \\
&\lesssim \l(1 + \sup_{t\in [0,1]}|X_t|\r)^C \cdot \l(\|X - X^0\|_* + \|X^0 - Y^{n,\e}\|_*\r)\cdot \frac{1}{n\eta_n}\sum_{k=1}^n \qfilter
\end{align*} 
Since
\begin{align}
\E\l|\frac{1}{n\eta_n}\sum_{k=1}^n \qfilter\r| = 1 \quad \Rightarrow \quad \frac{1}{n\eta_n}\sum_{k=1}^n \qfilter=O_p(1), \label{op1}
\end{align}
and Lemma \ref{lem:3.1} we have that 
\[
|J^{(1)}_n|\toP 0. 
\]

As for $J^{(2)}_n$: it follows for some $C>0$ that  
\begin{align*}
|J^{(2)}_n| &\lesssim  \sup_{\th\in \Theta,t\in [0,1]}(1 +|X_t|)^C \cdot \frac{n\e}{\eta_n}\cdot 
\sum_{k=1}^n\frac{1}{n}\sup_{s\in (\t,\T]}|Q_s - Q_\t| \qfilter \\
&\le O_p\l(n\e\sup_{s\in (0,\D_n]}|Q_s|\r), 
\end{align*}
since $\sup_{s\in (\t,\T]}|Q_s - Q_\t|\sim^d \sup_{s\in (0,\D_n]}|Q_s|$ and \eqref{op1}. 
Moreover, noticing under Q1[$\g$] that 
\[
\sup_{s\in (0,\D_n]}|Q_s| = o_p(\D_n^\g), 
\]
we obtain that  
\[
J^{(2)}_n = O_p(n\e\D_n^\g) \to 0. 
\] 
This completes the proof of \eqref{lem8-2}. $\qquad \Box$

\ \vspace{5mm}\\
Now, putting 
\[
\ol{\Psi}_{n,\e}(\th):=\e^2\eta_n^{-1}\l(\wt{\Psi}_{n,\e,\d}(\th) - \wt{\Psi}_{n,\e,\d}(\th_0)\r); \quad 
F(\th):= \int_0^1 |b(X_t^0,\th) - b(X_t^0,\th_0)|^2\,\df t, 
\]
and using Lemma \ref{lem3.5-6-alt}, we can easily show that 
\[
\sup_{\th\in \Theta}\l|\ol{\Psi}_{n,\e}(\th) - F(\th)\r| \toP 0. 
\]
This and the identifiability condition A3 yield the consistency: $\wt{\th}_{n,\e,\d} \toP \th_0$. 

Suppose that the L\'evy process $Q$ is of the form 
\[
Q_t = a t +   c W_t + \int_0^t \int_{|z|\le 1} z\,\wt{N}(\df t,\df z)  + \int_0^t \int_{|z|> 1} z\,N(\df t,\df z), 
\]
where $a\in \R^d$, $c\ge 0$, $W$ is a $d$-dimensional Wiener process, $N$ is a Poisson random measure associated with jumps of $Q$, and 
$\wt{N}(\df t,\df z)= N(\df t,\df z) - \nu(\df z)\df t$, and note that, for any $\d>0$, 
\[
Q_t = \a_{\d/\e} t + W_t + \int_0^t \int_{|z|\le \d/\e} z\,\wt{N}(\df t,\df z)  + \int_0^t \int_{|z|> \d/\e} z\,N(\df t,\df z), 
\]
where $a_{\d/\e} = a - \int_{\d/\e < |z|\le 1}z\,\nu(\df z)$. 

Hereafter, we put 
\[
\zeta:=\d/\e.
\] 
Using the same notation as in the previous theorem, the proof ends if we show that 
\begin{align}
\l(\s(\zeta)\e\r)^{-1}G_{n,\e,\d}\l(\th_0\r) \toD \int_0^1 \n_\th b(X_t^0,\th_0)\l[\df B_t\r]. \label{asy-normal}
\end{align}
Note that 
\begin{align*}
\l(\s(\zeta)\e\r)^{-1}G_{n,\e,\d}\l(\th_0\r) 
&= \l(\s(\zeta)\e\r)^{-1}\sum_{k=1}^n \n_\th b_{k-1}(\th_0)\l[\int_\t^\T\{b(X_s,\th_0) - b_{k-1}(\th_0)\}\,\df t\r]\I_{\l\{\|\D Q^\e\|^*_k \le \d\r\}} \\
&\quad + \s^{-1}(\zeta) \sum_{k=1}^n \n_\th  b_{k-1}(\th_0)\big[a_\d \D_n + \D_k^n W\big] \I_{\l\{\|\D Q^\e \|^*_k \le \d\r\}} \\
&\quad + \s^{-1}(\zeta) \sum_{k=1}^n \n_\th  b_{k-1}(\th_0)\l[\int_\t^\T \int_{|z|\le \zeta} z\,\wt{N}(\df t,\df z)\r] \\
&=: I^{(1)}_{n,\e,\d} + I^{(2)}_{n,\e,\d} + I^{(3)}_{n,\e,\d}. 
\end{align*}

As for $I^{(2)}_{n,\e,\d}$: Since $\qfilter$ is independent of $\D_k^nW$ and $X_\t$, we have that 
\begin{align*}
\E\l|I^{(2)}_{n,\e,\d}\r| 
&\le \s^{-1}(\zeta) \frac{1}{n}\sum_{k=1}^n  \E\l|\n_\th  b_{k-1}(\th_0)a_\d \r| \P\l(\|\D Q\|^*_k \le \zeta\r) \\
&\quad  + \s^{-1}(\zeta) \sum_{k=1}^n \E\l|\n_\th  b_{k-1}(\th_0)\big[\D_k^n W\big]\r| \P\l(\|\D Q\|^*_k \le \d\r)  \\
& \le \s^{-1}(\zeta) \sum_{k=1}^n \l(\E\int_{\t}^{\T}  trace \l(\n_\th b_{k-1}^{\otimes 2}(\th_0)\r)\,\df t\r)^{1/2}\P\l(\|\D Q\|^*_k \le \zeta\r) \\
&= O\l(\s^{-1}(\zeta)e^{-\la(\zeta)\D_n}\r). 
\end{align*}
The last equality is due to \eqref{poisson}. Therefore we have 
\begin{align}
\E\l|I^{(2)}_{n,\e,\d}\r| &= O\l(\l(\s(\zeta) \sum_{k=0}^{\infty} \frac{(\D_n\la(\zeta))^k}{k!}  \r)^{-1}\r). \label{I2-1}
\end{align}
For an integer $M$ such that $1/M \le \rho$, we see that, 
\begin{align}
\s(\zeta) (\D_n\la(\zeta))^M = \l(\s^{1/M}(\zeta) \D_n\la(\zeta)\r)^M \ge \l(\s^{\rho}(\zeta) \D_n\la(\zeta)\r)^M \to \infty \label{I2-2}
\end{align}
due to the the condition \eqref{with-rho}. Hence we have 
\[
\E\l|I^{(2)}_{n,\e,\d}\r| \to 0. 
\]

As for $I^{(1)}_{n,\e,\d}$: By the same argument as in the proof of Lemma 3.6, $H^{(1)}_{n,\e}(\th_0)$ in \cite{lss13}, 
we can obtain the following inequality: 
\begin{align*}
|I^{(1)}_{n,\e,\d}| &\lesssim  \frac{1}{n\e \s(\zeta)}\frac{1}{n}\sum_{i=1}^n |\n_\th b_{k-1}(\th_0)|\cdot |b_{k-1}(\th_0)| \\
&\qquad + \frac{1}{n\s(\zeta)}\sum_{i=1}^n |\n_\th b_{k-1}(\th_0)| \sup_{s \in (\t,\T]}|Q_t - Q_{\t}|\qfilter 
\end{align*}
Using the same estimates for $J^{(2)}_n$ in the proof of Lemma \ref{lem3.5-6-alt}, 
we have that 
\begin{align*}
|I^{(1)}_{n,\e,\d}| =  O_p\l(\frac{1}{n\e \s(\zeta)}\r) +  o_p\l(\s^{-1}(\zeta) e^{-\la(\zeta)\D_n}\r), 
\end{align*}
Then, by the same estimates as for \eqref{I2-1} and \eqref{I2-2} above, we see that $|I^{(1)}_{n,\e,\d}| \toP 0$. 

As for $I^{(3)}_{n,\e,\d}$: Let 
\[
L_t^{\d} := \s^{-1}(\zeta) \int_0^t \int_{|z|\le \d} z\,\wt{N}(\df t,\df z).  
\]
Thanks to Theorem 2 by Asmussen and Rosinski \cite{ar01}, it follows under the assumption \eqref{sigma(delta)-1} that 
there exists a Wiener process $B$, independent of $W$, such that 
\[
L^{\zeta}\toD B\quad \mbox{in $\mathbb{D}[0,1]$}, 
\]
as $\d\to 0$. Moreover, since $\|Y^{n,\e}- X^0\|^*$ a.s. by Lemma \eqref{lem:3.1}, we have a joint convergence 
\[
\l(L^{\zeta},Y^{n,\e}\r) \toD (B,X^0) \quad \mbox{in $\mathbb{D}[0,1]$}. 
\]
as $\e,\d\to 0$ and $n\to \infty$. 
Hence it follows from Theorem 5.16 by Jacod and Shiryaev \cite{js03} that 
\begin{align*}
I^{(3)}_{n,\e,\d} &= \sum_{k=1}^n \n_\th  b_{k-1}(\th_0)\l[\D_k^n L^{\zeta}\r] = \int_0^1 \n_\th b\l(Y^{n,\e}_t,\th_0\r)\,\df L^\zeta_t \\
&\toD \int_0^1 \n_\th b(X^0_t,\th_0)\,\df B_t
\end{align*}
This completes the proof of \eqref{asy-normal}, and the statement is proved. 

\ \vspace{2mm}\\
\begin{flushleft}
{\large {\bf Acknowledgement}}
\end{flushleft}
The author would like to thank the anonymous referees for their valuable suggestions and proposals that significantly improve this manuscript. 
This research has been partially supported by JSPS KAKENHI Grant-in-Aid for Scientific Research (C), Grant Number JP15K05009. 



\begin{thebibliography}{99}

\bibitem{ar01}Asmussen, S. and Rosinski, J. (2001). Approximations of small jumps of L\'evy processes with a view towards simulation, {\it J. Appl. Prob.}, {\bf 38}, 482--493.

\bibitem{bj96}Bichteler, K. and Jacod, J. (1986) Calcul de Malliavin pour les diffusions avec sauts: existence d'une densite dans le cas unidimensionnel. {\it Lecture Notes in Math}, 132--157. Springer-Verlag, Berlin. 

\bibitem{b90} Boukai, B. (1990). An explicit expression for the distribution of the supremum of Brownian motion with a change point. {\it Comm. Statist. Theory Methods}, {\bf 19}, (1), 31--40.

\bibitem{ct04}Cont, R. and Tankov, P. (2004). {\it Financial modelling with jump processes}. Chapman \& Hall/CRC, Boca Raton, FL. 

\bibitem{d49} Doob, J. L. (1949). Heuristic approach to the Kolmogorov-Sminorv Theorems. {\it Ann. Math. Statistics}, {\bf 20}, 393--403. 

\bibitem{gc90} Genon-Catalot, V. (1990). Maximum contrast estimation for diffusion processes from discrete observations, {\it Statistics}, {\bf 21}. 99--116.

\bibitem{gj93} Genon-Catalot, V. and Jacod, J. (1993). On the estimation of the diffusion coefficient for multidimensional diffusion process, {\it Ann. Inst. Henri Poincar\'e Prob. Statist.} {\bf 29}, 119--151.

\bibitem{gs09} Gloter, A. and S{\o}rensen, M. (2009). Estimation for stochastic differential equations with a small diffusion coefficient, {\it Stochastic Process. Appl.}, {\bf 119}, 679--699.

\bibitem{ih81}Ibragimov, I. A. and Has'minskii, R. Z. (1981) \textit{Statistical Estimation}, Springer-Verlag, Berlin.

\bibitem{js03} Jacod, J. and Shiryaev, A. N. (2003). {\it Limit theorems for stochastic processes}. Second edition. Springer-Verlag, Berlin. 
\bibitem{j07} Joulin, A. (2007). On maximal inequalities for stable stochastic integrals. {\it Potential Anal.}, {\bf 26}, (1), 57--78. 

\bibitem{kt01} Kunitomo, N. and Takahashi, A. (2001). The asymptotic expansion approach to the valuation of interest rate contingent claims.  {\it Math. Finance}, {\bf 11}, 117--151.

\bibitem{k84} Kutoyants, Yu. A. (1984). {\it Parameter estimation for stochastic processes}. Heldermann, Berlin.

\bibitem{k94} Kutoyants, Yu. A.  (1994). {\it Identification of dynamical systems with small noise}. Kluwer, Dordrecht.

\bibitem{la90} Laredo, C.F. (1990). A sufficient condition for asymptotic sufficiency of incomplete observations of a diffusion process, {\it Ann. Statist.}, {\bf 18}, 1158--1171.

\bibitem{l09} Long, H. (2009) Least squares estimator for discretely observed Ornstein-Uhlenbeck processes with small L\'{e}vy noises, {\it Statist. Prob. Letters}, {\bf 79}, 2076--2085.

\bibitem{l10} Long, H. (2010). Parameter estimation for a class of stochastic differential equations driven by small stable noises from discrete observations, {\it Acta Mathematica Scientia}, {\bf 30B}, 645--663.

\bibitem{m10} Ma, C. (2010). A note on ``Least squares estimator for discretely observed Ornstein-Uhlenbeck processes with small L\'{e}vy noises", {\it Statist. Prob. Letters}, {\bf 80}, 1528--1531.

\bibitem{ma13} Masuda, H. (2013). Convergence of Gaussian quasi-likelihood random fields for ergodic Levy driven SDE observed at high frequency. {\it The Annals of Statistics}, {\bf 41}, 1593--1641. 

\bibitem{lss13} Long, H; Shimizu, Y. and Sun, W. (2013). Least squares estimators for discretely observed stochastic processes driven by small L\'evy noises. {\it J. Multivariate Analysis}, {\bf 116}, 422--439. 

\bibitem{oy11} T. Ogihara, N. Yoshida, Quasi-likelihood analysis for the stochastic differential equation with jumps. {\it Stat. Inference Stoch. Process.}, {\bf 14}, 189--229.

\bibitem{pav08} Pavlyukevich, I. (2008). Ruin probabilities of small noise jump-diffusions with heavy tails. {\it Appl. Stoch. Models Bus. Ind.} {\bf 24}, (1), 65--82. 

\bibitem{pav11} Pavlyukevich, I. (2011). First exit times of solutions of stochastic differential equations driven by multiplicative L\'evy noise with heavy tails. {\it Stoch. Dyn.}, {\bf 11}, (2--3), 495--519. 

\bibitem{p04}  Protter, P. E. (2004). {\it Stochastic integration and differential equations}. Second edition. Springer-Verlag, Berlin. 

\bibitem{p81} Pruitt, W. E. (1981). The growth of random walks and L\'evy processes. {\it Ann. Probability}, {\bf 9}, (6), 948--956. 

\bibitem{s06} Shimizu, Y. (2006). M-estimation for discretely observed ergodic diffusion processes with infinitely many jumps. {\it Statist. Infer. Stochastic Proc.}, {\bf 9}, (2), 179--225. 

\bibitem{s10} Shimizu, Y. (2010). Threshold selection in jump-discriminant filter for discretely observed jump processes. {\it Statist. Methods and Appl.}, {\bf 19}, (3), 355--378. 

\bibitem{s12} Shimizu, Y. (2012).  Local asymptotic mixed normality for discretely observed non-recurrent Ornstein-Uhlenbeck processes, {\it Ann. Inst. Statist. Math.}, {\bf 64}, (1), 193-211.


\bibitem{sy06} Shimizu, Y. and Yoshida, N. (2006) Estimation of parameters for diffusion processes with jumps from discrete observations. {\it Statist. Infer. Stochastic Proc.}, {\bf 9}, (3), 227--277. 

\bibitem{so00} S{\o}rensen, M. (2000). Small dispersion asymptotics for diffusion martingale estimating functions, Preprint No. 2000-2, Department of Statistics and Operation Research, University of Copenhagen, Copenhagen.

\bibitem{su03} S{\o}rensen, M. and Uchida, M. (2003). Small diffusion asymptotics for discretely sampled stochastic differential equations, {\it Bernoulli}, {\bf 9}, (6), 1051--1069. 

\bibitem{t99} Takahashi, A. (1999). An asymptotic expansion approach to pricing contingent claims. {\it Asia-Pacific Financial Markets}, {\bf 6}. 115--151.

\bibitem{ty04} Takahashi, A. and Yoshida, N. (2004). An asymptotic expansion scheme for optimal investment problems.  {\it Stat. Inference Stoch. Process.}, {\bf 7}, 153--188.

\bibitem{u04} Uchida, M. Estimation for discretely observed small diffusions based on approximate martingale estimating functions, Scand. J. Statist. {\bf 31} (2004) 553-566.

\bibitem{u08} Uchida, M. (2008). Approximate martingale estimating functions for stochastic differential equations with small noises. {\it Stochastic Process. Appl.}, {\bf 118}, 1706--1721.

\bibitem{uy04b} Uchida, M. and Yoshida, N. (2004). Asymptotic expansion for small diffusions applied to option pricing. {\it Stat. Inference Stoch. Process.}, {\bf 7}, 189--223.

\bibitem{y92a} Yoshida, N. (1992). Asymptotic expansion of maximum likelihood estimators for small diffusions via the theory of Malliavin-Watanabe. {\it Probab. Theory Relat. Fields}, {\bf 92}, 275--311.

\bibitem{y92b} Yoshida, N. Asymptotic expansion for statistics related to small diffusions. {\it J. Japan Statist. Soc.},  {\bf 22}, 139--159.

\bibitem{y11} Yoshida, N. (2011). Polynomial type large deviation inequalities and quasi-likelihood analysis for stochastic differential equations. {\it Ann. Inst. Statist. Math.}, {\bf 63}, (3), 431--479

\end{thebibliography}
\end{document}